\documentclass[reqno, 12pt]{amsart}

\usepackage[algoruled,lined,noresetcount,norelsize]{algorithm2e}

\usepackage{graphicx}
\usepackage{amsmath,amssymb,amsthm}
\usepackage{mathrsfs}
\usepackage{mathabx}\changenotsign
\usepackage{dsfont}
\usepackage{xcolor}
\usepackage[backref]{hyperref}
\hypersetup{
    colorlinks,
    linkcolor={red!60!black},
    citecolor={green!60!black},
    urlcolor={blue!60!black}
}
\usepackage[T1]{fontenc}
\usepackage{lmodern}
\usepackage[babel]{microtype}
\usepackage[english]{babel}

\linespread{1.3}
\usepackage{geometry}
\geometry{left=22mm,right=22mm, top=25mm, bottom=25mm}

\numberwithin{equation}{section}
\numberwithin{figure}{section}

\usepackage{enumitem}

\theoremstyle{plain}
\newtheorem{thm}{Theorem}[section]

\newtheorem{obs}[thm]{Observation}
\newtheorem{clm}[thm]{Claim}
\newtheorem{cor}[thm]{Corollary}
\newtheorem{lemma}[thm]{Lemma}

\theoremstyle{definition}

\newtheorem{dfn}[thm]{Definition}

\newcommand\eps{\varepsilon}
\newcommand{\Exp}{\mathbb{E}}
\newcommand{\Prob}{\mathbb{P}}
\newcommand{\Ik}{I_k}
\newcommand{\E}{\mathcal{E}}

\newcommand{\done}{{\color{red}\checkmark}}

\def\tref#1{ {\tiny \ref{#1} } }

%%% LABELS 
 
\def\itm#1{\rm ({#1})}

\def\itmarab#1{\mbox{\itm{{\it #1\,}\arabic{*}\hspace{.05em}}}}

\newcommand{\Tk}{\mathcal{T}_k}
\newcommand{\Tkplus}{\mathcal{T}_{k+1}}

\parindent 0mm

\begin{document}

\title[Connector-Breaker games on random boards]{Connector-Breaker games on random boards}
\author[Dennis Clemens]{Dennis Clemens}
\address{(DC) Technische Universit\"at Hamburg, Institut f\"ur Mathematik, Am Schwarzenberg-Campus 3, 21073 Hamburg, Germany }
\email{dennis.clemens@tuhh.de}
\author[Laurin Kirsch]{Laurin Kirsch}
\address{(LK)}
\email{lkirsch@student.ethz.ch}
\author[Yannick Mogge]{Yannick Mogge}
\address{(YM) Technische Universit\"at Hamburg, Institut f\"ur Mathematik, Am Schwarzenberg-Campus 3, 21073 Hamburg, Germany }
\email{yannick.mogge@tuhh.de}

\maketitle

\begin{abstract}
By now, the Maker-Breaker connectivity game on a complete graph $K_n$ or on a random graph $G\sim G_{n,p}$ is well studied.
Recently, London and Pluh\'ar
suggested a variant in which
Maker always needs to choose her edges in such a way that her graph stays connected.
By their results it follows that for this connected version of the game, the threshold bias  on $K_n$ and the threshold probability on $G\sim G_{n,p}$
for winning the game drastically differ from the corresponding values for the usual Maker-Breaker version, assuming Maker's bias to be 1. However, they observed that the threshold biases of both versions played on $K_n$ are still of the same order if instead Maker is allowed to claim two edges in every round. Naturally, this made London and Pluh\'ar ask whether a similar phenomenon 
can be observed when a $(2:2)$ game is played on $G_{n,p}$. We prove that this is not the case, and determine the threshold probability for winning this game to be of size $n^{-2/3+o(1)}$.
\end{abstract}

%%%%%%%%%%%%%%%%%%%%%%%%%%%%%%%%%%%%%%%%%%%%%%%%%%%%%%%%%%%%%%%
%%%%%%%%%%%%%%%%%%%%%%%%%%%%%%%%%%%%%%%%%%%%%%%%%%%%%%%%%%%%%%%
%
% Introduction
%
%%%%%%%%%%%%%%%%%%%%%%%%%%%%%%%%%%%%%%%%%%%%%%%%%%%%%%%%%%%%%%%
%%%%%%%%%%%%%%%%%%%%%%%%%%%%%%%%%%%%%%%%%%%%%%%%%%%%%%%%%%%%%%%

\section{Introduction}

A positional game is a perfect information game played by two players on a {\em board} $X$ equipped with a family of subsets $\mathcal F \subset 2^X$, which represent {\em winning sets}. During each round of such a game both players claim previously unclaimed elements of the board. For instance, in the $(m:b)$~Maker-Breaker variant, Maker and Breaker take turns claiming up to $m$ (as Maker) or up to $b$ (as Breaker) such elements.
Maker wins the game by claiming all elements of a winning set; Breaker wins otherwise. If $m=b=1$, the game is called {\em unbiased}. Otherwise, we call the game {\em biased} with $m$ and $b$ being the respective biases of Maker and Breaker.

Note that Maker-Breaker games are {\em bias monotone} in the sense that claiming more elements of the board never hurts the corresponding player. Given $(X,\mathcal{F})$ and having Maker's bias $m$ fixed, we thus can find an integer $b_0$, called the {\em threshold bias},
such that Breaker wins the $(m,b)$~Maker-Breaker game 
if and only if $b\geq b_0$ holds (except for trivial games, where Maker can win before Breaker's first move).\\

In our paper, we will consider a variant of such Maker-Breaker games played on a graph $G$ sampled according to the {\em Binomial random graph model} $G_{n,p}$ (for short we will write $G\sim G_{n,p}$),
where we fix $n$ vertices and each edge appears with probability $p$ independently of all other choices. It is well known that for monontone increasing graph properties $\mathcal{F}$ this model always comes with a {\em threshold probability} $p^{\ast}$ (see e.g.~\cite{BT1987}) such that
$$
\Prob\left(G\sim G_{n,p}\text{ satisfies }\mathcal{F}\right)
\rightarrow
\begin{cases}
0 & ~ \text{if }p=o(p^{\ast})\\
1 & ~ \text{if }p=\omega(p^{\ast})~ .
\end{cases}
$$
For some properties $\mathcal{F}$ there is even a {\em sharp threshold}
in the sense that
$$
\Prob\left(G\sim G_{n,p}\text{ satisfies }\mathcal{F}\right)
\rightarrow
\begin{cases}
0 & ~ \text{if }p\leq (1+o(1))p^{\ast}\\
1 & ~ \text{if }p\geq (1+o(1))p^{\ast}
\end{cases}
$$
holds. One such example will be given in the following paragraph.
 
\medskip

{\bf Maker-Breaker connectivity game.}
The Maker-Breaker connectivity game is a game variant played on the edges of a graph $G$ with $\mathcal F$ consisting of all spanning trees of $G$. Lehman~\cite{L1964} stated that Maker wins the $(1:1)$~Maker-Breaker version of this game as the second player if and only if the graph $G$ contains two edge-disjoint spanning trees. Since the complete graph $K_n$ can be decomposed into even more spanning trees, a natural question is to ask what happens when Breaker's power gets increased by making his bias larger. 
Chv\'atal and Erd\H{o}s~\cite{CE1978} initiated the study of the $(1:b)$ variant and they could prove that its threshold bias is bounded from above by $(1+o(1))n/\ln n$. 
A matching lower bound was later given by Gebauer and Szab\'o \cite{GS2009}.

Now, if in the $(m:b)$ game on $K_n$ Maker and Breaker do not play according to a deterministic strategy but instead they play purely at random, the final graph consisting of Maker's edges will behave similarly to a random graph $G\sim G_{n,p}$ with $p=m/(m+b)$.
It is well known that the (sharp) threshold probability $p^\ast$
for $G\sim G_{n,p}$ being connected, i.e. where $G\sim G_{n,p}$ turns from almost surely being disconnected
to almost surely being connected, satisfies 
$p^\ast=(1+o(1))\ln n/n$ (see e.g.~\cite{B1985}, \cite{JLR2000}).
Surprisingly, when $m=1$, the latter corresponds to $b=(1+o(1))n/\ln n$ and thus perfectly matches the threshold bias mentioned above. In other words, for most values of $b$, a randomly played $(1:b)$ Maker-Breaker connectivity game on $K_n$ is very likely to end up with the same winner as the corresponding deterministically played game. This phenomenon usually is referred to as {\em probabilistic intuition}. There is a wide range of other games fulfilling this property as well, for example the
perfect matching game, the Hamiltonicity game \cite{K2011}
and the
doubly biased $(m:b)$ connectivity game when Maker's bias satisfies $m=o(\ln n)$~\cite{HMS2012}.
But there also exist games, where this intuition fails, such as the diameter game \cite{BMP2016} and the $H$-game \cite{BL2000}.\\

A different approach to give Breaker more power is to play unbiased, but to thin the board instead. Stojakovi\'c and Szab\'o \cite{SS2005} initiated the study of Maker-Breaker games played on a random graph $G \sim G_{n,p}$, their main question being to find the threshold probability $p^\ast$
at which an almost sure Breaker's win turns into an almost sure Maker's wins. The existence of such a (not necessarily sharp) threshold is guaranteed by the fact that the property of Maker having a winning strategy is monotone increasing. Now, for the connectivity game it is obvious that the threshold probability needs to satisfy 
$p^\ast\geq (1+o(1))\ln n/n$ since for smaller $p$ a random graph 
$G\sim G_{n,p}$ almost surely contains isolated vertices (see e.g.~\cite{B1985}, \cite{JLR2000}).
Stojakovi\'c and Szab\'o could show that
indeed $p\geq (1+o(1))\ln n/n$ is enough for 
Maker to win 
the connectivity game on $G\sim G_{n,p}$ almost surely.
Interestingly, this threshold probability
asymptotically equals the reciprocal of the threshold bias for
the corresponding Maker-Breaker game on $K_n$ --~another phenomenon which has also been observed for 
many other natural games 
(see e.g. \cite{FGKN2015},~\cite{HKSS2009}, \cite{NSS2016},~\cite{SS2005}).\\

{\bf Connector-Breaker games.} Recently, under the name PrimMaker-Breaker games,
London and Pluh\'ar~\cite{LP2018}
introduced a connected version of the Maker-Breaker games discussed above.
These games, which we will call {\em Connector-Breaker games} in the following, 
are played in the same way as the already described Maker-Breaker games,
with the only difference that {\em Connector}
(in the role of Maker) 
needs to choose her edges in such a way that her graph stays connected throughout the game. 
While London and Pluh\'ar \cite{LP2018}
studied the Connector-Breaker connectivity game on $K_n$, where
Connector aims for a spanning tree of $K_n$,
even more recently
Corsten, Mond, Pokrovskiy, Spiegel and Szab\'o~\cite{CMPSS2019} discussed the variant 
in which Connector aims for an odd cycle of $K_n$.
For the unbiased game, London and Pluh\'ar proved the following:

\begin{thm}
  Playing the $(1:1)$ Connector-Breaker game on a graph $G$ with $n$ vertices, Connector wins as the first player if and only if $G$ contains a copy of $H_n$, where $H_n$ is the graph $K_{n-2,2}$ with an additional edge inside its two-element color class.
\end{thm}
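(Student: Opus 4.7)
My plan is to reduce the containment condition to a cleaner combinatorial one and then exhibit explicit winning strategies in both directions. The reduction I would use is: $G$ contains a copy of $H_n$ if and only if $G$ has at least two universal vertices, i.e.\ vertices of degree $n-1$. One direction is immediate from the definition of $H_n$; for the other, two universal vertices are automatically adjacent and share every other vertex as a common neighbour, so together with the remaining $n-2$ vertices they span a copy of $H_n$.

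\emph{Connector's strategy when $G \supseteq H_n$.} Let $u,v$ be two universal vertices. Connector opens with $c_1 := uv$ and then follows a pairing strategy on the $n-2$ pairs $P_a := \{au, av\}$ for $a \in V(G)\setminus\{u,v\}$, all of which lie in $E(G)$ by universality. Whenever Breaker claims one edge of an intact pair $P_a$, Connector responds by claiming the other edge of $P_a$; otherwise she claims an arbitrary edge from some still-intact pair. Every Connector-edge is incident to $u$ or $v$, so her subgraph remains connected throughout; and after $n-1$ of her moves every $a \in V(G)\setminus\{u,v\}$ has at least one of $au, av$ in her edge-set, so her graph spans $V(G)$.

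\emph{Breaker's strategy when $G$ has at most one universal vertex.} After Connector's opening $c_1 = u_1v_1$, at least one endpoint, say $v_1$, is non-universal, so one can fix $w \in V(G)\setminus\{u_1,v_1\}$ with $v_1 w \notin E(G)$. Breaker's goal will be to forever keep $w$ out of Connector's component, which already suffices to defeat her. Writing $T_k$ for the vertex set of Connector's component after her $k$-th move, he maintains the invariant that after each of his moves every $G$-edge from $T_k$ to $w$ is his. In his first move he claims $u_1 w$ if $u_1 w \in E(G)$, initializing the invariant since $v_1 w \notin E(G)$. In each later round Connector adds at most one new vertex $x$ to $T_k$, and Breaker responds by claiming $xw$ whenever $xw \in E(G)$ is still free (and plays arbitrarily otherwise).

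\emph{Main obstacle.} The subtle point is to verify that the edge $xw$ Breaker wants to claim really is still free at the moment he wants it. I would prove this by induction on $k$, simultaneously showing that $w \notin T_k$: if $xw$ were already Connector's, she would have played it on some earlier move, which would have required either $x$ or $w$ to already lie in her component at that time --- contradicting either the prior invariant (every $T\to w$ edge was Breaker's) or the inductive fact that $w$ had not yet been attached. Once the invariant is established, no edge from $T_k$ to $w$ is ever available to Connector, so $w$ remains outside her component forever and she cannot build a spanning tree.
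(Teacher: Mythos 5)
Your argument is correct. Note that the paper does not actually prove this theorem --- it is quoted from London and Pluh\'ar \cite{LP2018} --- so there is no in-text proof to compare against; your reduction ($G\supseteq H_n$ iff $G$ has two universal vertices), the pairing strategy on the pairs $\{au,av\}$ after opening with $uv$, and the Breaker strategy of sealing off a non-neighbour $w$ of a non-universal endpoint of Connector's first edge together constitute a complete and standard proof of the cited result.
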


Moreover, one can easily see that for $b\geq 2$
the $(1:b)$ Connector-Breaker connectivity game is won by Breaker on every graph $G$~\cite{LP2018}. Thus, the threshold bias for such a game equals 2.
Also, if the game is played on $G\sim G_{n,p}$, then, by the theorem above,
$p$ needs to be almost 1 for Connector to have a winning strategy on $G$ almost surely. Note that both these observations are in huge contrast to the results for the Maker-Breaker analogue.
However, by increasing Maker's bias by just 1, London and Pluh\'ar~\cite{LP2018} showed that the situation changes suddenly.

\begin{thm}
Playing the $(2:b)$ Connector-Breaker game on $K_n$, Connector wins if ${b<n/(8 \ln n)}$, and Breaker wins if $b>n/\ln n$.
\end{thm}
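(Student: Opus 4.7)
My plan is to show Breaker can isolate a vertex outside Connector's tree via a Chv\'atal--Erd\H{o}s / Gebauer--Szab\'o style degree attack. At each round let $T$ denote Connector's current tree and for each $v \notin T$ write $f(v) := n-1-d_B(v)$ for the number of free edges still incident to $v$. Breaker targets the vertex $v^{\ast} \notin T$ maximising $d_B(v)$ and claims $b$ of its free incident edges. To analyse this I would track a potential of the form
\[
\Phi_t := \sum_{v \notin T_t} \Bigl(1-\tfrac{1}{b}\Bigr)^{f_t(v)},
\]
which Breaker's move shrinks multiplicatively, while Connector's move deletes at most two summands, corresponding to the at most two new vertices she can attach to her tree in a single round. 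A standard iteration then shows that within $O(\ln n)$ rounds some $v\notin T$ satisfies $f(v)<b$, at which point Breaker finishes $v$ off in one further move.

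\textbf{Connector's direction ($b < n/(8\ln n)$).} Here I would use a greedy two-leaf extension strategy. Connector starts with a single vertex $v_0$ and at each round identifies the two vertices $u_1,u_2 \notin T$ of largest Breaker-degree, then claims two free edges $t_1u_1$ and $t_2u_2$ with $t_1,t_2 \in T$, thereby attaching the two currently most endangered outside vertices to her tree. To prove this is always possible I would maintain, by induction on the round, the invariant that every $u\notin T$ has at least one free edge to $T$, equivalently $|T|>d_B(u,T)$. The invariant should be controlled by a potential of the shape
\[
\Psi_t := \sum_{u \notin T_t} \alpha^{d_B(u)}
\]
for an appropriate constant $\alpha>1$: Breaker's move multiplies $\Psi_t$ by roughly $\alpha^{b}$ distributed across its targets, while Connector's move removes the two largest summands. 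Balancing these effects across the $\sim(n-1)/2$ rounds of play, together with a union-bound control on which round a vertex could first become unreachable, should yield that for $b<n/(8\ln n)$ the invariant is never violated and Connector completes a spanning tree.

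\textbf{Main obstacle.} The Breaker direction is essentially a biased-game adaptation of a classical isolation argument and should pose few surprises. The delicate step is the Connector direction: unlike in Maker-Breaker, Connector cannot use her second move to stockpile an edge far from $T$, so both moves per round must serve simultaneously to grow the tree \emph{and} to pre-empt Breaker's isolation attempts. This rules out the naive reduction to a $(1:b/2)$ Maker-Breaker connectivity game and forces the two greedy extensions to carry the combined offensive and defensive burden. Calibrating the potential $\Psi_t$ so that attaching merely the two most endangered leaves per round suffices to keep every outside vertex reachable is the technical heart of the argument, and the factor $1/8$ in the bound is very likely an artefact of this calibration rather than of the problem itself.
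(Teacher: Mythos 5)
A preliminary remark: this paper does not prove the statement in question; it is quoted as a known result of London and Pluh\'ar \cite{LP2018}, so your attempt has to be judged on its own merits rather than against a proof in this text.

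Your Breaker direction has a genuine gap. As described, Breaker concentrates all $b$ edges on the single vertex $v^{\ast}$ of maximal Breaker-degree, but Connector answers by immediately claiming a free edge from her tree to $v^{\ast}$ (or a free path of length two), which she can do for as long as $f(v^{\ast})$ is anywhere near $n$; to prevent this Breaker would have to occupy essentially the \emph{entire} star at $v^{\ast}$, which takes about $n/b\approx\ln n$ rounds, long after the vertex has been absorbed. So ``some $v\notin T$ reaches $f(v)<b$'' is achieved only for vertices already in $T$, and the one-box-at-a-time attack fails outright. What is needed is a simultaneous attack on many stars, i.e.\ a reduction to the Box game $Box(b,2;n-1,\ldots,n-1)$ in which Connector destroys up to two boxes per round (she adds at most two vertices to her tree per round). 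Your potential also points the wrong way: concentrating edges on one vertex makes the summand $(1-1/b)^{f(v^{\ast})}$ \emph{grow}. Finally, even the correct Box-game reduction is not automatic: against a BoxBreaker of bias two the standard analysis fills a box of size $n-1$ only when $b\gtrsim 2n/\ln n$, so recovering the claimed bound $b>n/\ln n$ requires exploiting the connectivity constraint further (for instance, that rescuing two boxes in one round forces both of Connector's edges to be direct tree-extensions). None of this is present in the sketch.

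The Connector direction is closer to a workable argument (it is essentially the Box-game degree control of Lemma~\ref{lem:boxgame} and Corollary~\ref{cor:boxgame} with the roles reversed), but the invariant you propose to maintain --- every $u\notin T$ satisfies $d_B(u,T)<|T|$ --- is false and cannot be restored by induction: in round $t$ the tree has only $2t+1$ vertices, so Breaker can claim \emph{all} edges from some outside vertex $u$ to the current tree for a cost of $O(t)$ edges and thereby locally block many vertices at once early in the game. Such vertices are not lost (the tree grows past the block), so the correct statement is a global bound $d_B(u)=O(b\ln n)$ for every unattached $u$, obtained by having Connector always serve the unattached vertices of largest Breaker-degree \emph{among those she can currently reach}, combined with an endgame count: if a set $U$ of outside vertices were ever simultaneously unreachable, each $u\in U$ would need $d_B(u)\ge n-|U|$, and the two bounds are incompatible when $b<n/(8\ln n)$. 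Your proposal names this calibration as the technical heart but does not carry it out, and as stated the induction breaks in the first round in which Breaker blocks a vertex; moreover the reduction to Corollary~\ref{cor:boxgame} needs care precisely because Connector cannot always answer in the box she would like to (she may only claim edges incident to her current tree).
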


This results shows that increasing Connector's bias makes a huge difference. In particular, the threshold bias in the $(2:b)$ variant is of the same order as in the corresponding Maker-Breaker game and thus, in contrast to the $(1:b)$ games, both variants behave similarly. Naturally, this made London and Pluh\'ar \cite{LP2018} ask whether something similar could be observed when playing the $(2:2)$ game on a graph $G\sim G_{n,p}$ and if it might behave similarly to the $(1:1)$ Maker-Breaker version. In this paper we show that the latter is not the case, and we prove the following result:

\begin{thm}\label{main}
The threshold probability $p^\ast$ for the $(2:2)$ Connector-Breaker connectivity game on $G\sim G_{n,p}$ is of size $n^{-2/3+o(1)}.$
\end{thm}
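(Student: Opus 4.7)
The plan is to prove Theorem~\ref{main} by establishing matching upper and lower bounds on $p^\ast$: for every fixed $\eps>0$, Connector wins whp when $p\ge n^{-2/3+\eps}$, and Breaker wins whp when $p\le n^{-2/3-\eps}$. Each direction requires a strategy plus a supporting high-probability property of $G\sim G_{n,p}$.

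\textbf{Upper bound.} I would describe a Connector strategy that grows her connected subgraph $T$ by exactly two new vertices in each round. The two moves per round form a ``double extension'', either two free $G$-edges from a common vertex $v\in T$ to $V\setminus T$, or a free cherry $v$-$u$-$w$ with $v\in T$ and $u,w\notin T$. To handle the endgame safely, Connector would begin with a short preprocessing phase that first attaches a (whp small) set of ``dangerous'' vertices of $G$ to her tree, then runs the bulk expansion on the remainder. The analysis reduces to verifying that $G\sim G_{n,p}$ with $p=n^{-2/3+\eps}$ whp has a robust pair-expansion property: after the adversarial deletion of any $O(n)$ edges, every proper $T\subsetneq V$ still admits at least one of the two extension moves. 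Since the expected number of cherries through a fixed vertex is of order $(np)^2 = n^{2/3+2\eps}$, concentration inequalities together with a union bound over the relevant $T$ should give the property whp.

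\textbf{Lower bound.} For $p=n^{-2/3-\eps}$ I would identify a local obstruction in $G$ that Breaker can exploit. The natural candidate is a small subgraph of edge-density exactly $3/2$, since the threshold for its appearance in $G_{n,p}$ is precisely $n^{-2/3}$. Below this threshold, whp some small vertex set has an anomalously sparse joint neighborhood, so that the number of pair-extensions through it is only $O(1)$; Breaker can locate such a configuration in advance and systematically claim all of these pair-extensions, forcing Connector either to waste moves defending it or to eventually leave it disconnected. A first-moment/Janson-type calculation should yield the appearance of such an obstruction whp below $n^{-2/3}$, and a potential-function accounting should certify that Breaker's play on it is enough to block Connector.

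\textbf{Main obstacle.} The principal difficulty lies on the upper-bound side: Breaker's adaptive control over his $O(n)$ edges means the formulation of ``robust pair-expansion after adversarial deletion'' must be chosen so that the preprocessing phase can absorb the whp small dangerous set \emph{before} Breaker disrupts it, and so that the main expansion phase survives all remaining attacks. A secondary challenge on the lower-bound side is identifying the exact $3/2$-density structure whose absence both appears whp below $n^{-2/3}$ and genuinely forces Connector's double-extension strategy to fail; matching the exponent $-2/3$ requires the subgraph to be chosen just right, and I would expect a careful case analysis (not unlike the one used in Corsten--Mond--Pokrovskiy--Spiegel--Szab\'o~\cite{CMPSS2019} for the odd-cycle variant) to pin it down.
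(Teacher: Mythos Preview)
Your upper-bound plan has a concrete gap. The ``robust pair-expansion after adversarial deletion of $O(n)$ edges'' property you propose is simply false at $p=n^{-2/3+\eps}$: every vertex has degree $\Theta(n^{1/3+\eps})$, so an adversary with $O(n)$ edges can completely isolate any single vertex $x$, and then $T=V\setminus\{x\}$ admits no extension at all. More to the point, the greedy double-extension gives Connector no mechanism to reach a \emph{targeted} vertex before Breaker seals it off: Breaker can claim every edge at a chosen $x$ within $O(n^{1/3+\eps})$ rounds, while in that many rounds $|V_C|=O(n^{1/3+\eps})$ and the expected number of $G$-edges from such a set to $x$ is only $O(n^{1/3+\eps})\cdot p=O(n^{-1/3+2\eps})=o(1)$ for small $\eps$. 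Your preprocessing phase faces the identical obstacle, since ``absorbing a dangerous vertex'' is the same task. What the paper supplies instead is a targeting gadget: for each $x$ it finds (whp) a full binary tree $\mathcal{T}_k$ with $k=O(1/\eps)$, rooted in $V_C$, whose $2^{k-1}$ leaves are all adjacent to $x$ in $G\setminus B$. Connector descends one level per round; since Breaker's two edges touch at most two of the four grandchild-subtrees, she always retains a good $\mathcal{T}_{k-1}$ and reaches $x$ in a constant number of rounds. Producing such trees throughout the game requires a delicate layered decomposition of $G_{n,p}$ (Lemma~\ref{lem:Decomp}) together with a box-game argument bounding $d_B(v)$ for $v\notin V_C$; your cherry-counting does not substitute for this.

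Your lower-bound sketch is in the right direction but too vague to carry the exponent. The obstruction is not a single fixed subgraph of density $3/2$; the paper instead grows, for a carefully chosen $x$, an iterated ``bad set'' $B^x=\bigcup_i B_i^x$ with $B_1^x=N_G(x)$ and $B_i^x=\{v\notin B^x:\ d_G(v,B^x)\ge 2\}$. The whp properties that matter are that each $v\in B_i^x$ has \emph{exactly} two neighbours in $B_{\le i}^x$, that $e_G(B_1^x)=0$, and that every vertex outside $B^x$ has at most one neighbour in $B^x$. This layered structure is exactly what allows Breaker to maintain the invariant that every free edge from $V_C$ into $B^x$ only goes ``upward'' in the layers; a short case analysis then shows Connector's two edges per round create at most two violations, which Breaker repairs. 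A first-moment/Janson bound on a fixed gadget will not recover this invariant, and without it there is no potential function to track.
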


Hence, even if Connector's bias gets increased, a much denser random graph is necessary for Connector to have a chance at winning almost surely the connectivity game than in the respective Maker-Breaker variant of this game.
Since the proof of our theorem is rather technical and the proofs of the upper and lower bound require different techniques, we split the theorem into two parts.

\begin{thm}\label{main_breaker}
  Let $\eps >0$ be a constant. For $p\leq n^{-2/3-\eps}$ a random graph $G\sim G_{n,p}$ a.a.s.~has the following property:
  Playing a $(2:2)$ Connector-Breaker game
  on the edge set of $G$, Breaker has a strategy to keep a vertex isolated in Connector's graph.
\end{thm}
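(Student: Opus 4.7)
The approach combines structural sparsity properties of $G\sim G_{n,p}$ at $p\leq n^{-2/3-\eps}$ with an adaptive edge-claiming strategy for Breaker. The crucial feature is that $p=n^{-2/3}$ is precisely the threshold for the appearance of $K_4$, which makes the game ``locally tree-like'' below this threshold.

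First I would verify a collection of properties that hold a.a.s.\ for $G\sim G_{n,p}$: \itm{a} the expected number of copies of $K_4$ in $G$ is $\binom{n}{4}p^6=O(n^{-6\eps})\to 0$, so $G$ is $K_4$-free, and more generally, every subgraph $H\subseteq G$ on $k$ vertices has fewer than $\frac{3}{2}k$ edges; \itm{b} $\Delta(G)=(1+o(1))np$ and $|N_G(U)|=(1+o(1))|U|\cdot np$ for small $U$; \itm{c} typical vertex pairs $u,v$ share only a bounded number of common neighbors, with a suitable uniform version holding after discarding a negligible ``bad'' set of vertices; and \itm{d} $|N_{\leq k}(U)|$ is appropriately controlled for $k$ constant. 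These are all first- or second-moment calculations.

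Breaker's strategy would then proceed as follows. After Connector's first move, which reveals a path $T_1$ on three vertices, Breaker selects a target vertex $v\notin T_1$ that avoids the bad set and satisfies $\mathrm{dist}_G(T_1,v)\geq 3$; such $v$ exist in abundance because $|N_{\leq 2}(T_1)|=o(n)$ by \itm{b} and \itm{d}. In every subsequent round Breaker claims two unclaimed edges of $G$ incident to $v$, giving priority to the \emph{dangerous} edges: those edges $uv$ for which $u$ currently lies in $N_{\leq 1}(T_t)$ in the graph of unclaimed edges, since these are precisely the edges that Connector could take in her next move (either directly, if $u\in T_t$, or via a $P_3$-style extension of her tree). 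Since $\deg_G(v)=O(np)$, Breaker would finish isolating $v$ after $O(np)$ rounds.

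The heart of the analysis is to show that no unclaimed dangerous edge is ever available to Connector. The initial distance condition $\mathrm{dist}_G(T_1,v)\geq 3$ rules out any dangerous edge in round two. Thereafter, when Connector extends $T_{t-1}$ to $T_t$ by adding two new vertices $w_1,w_2$, the newly dangerous edges at $v$ are contained in $N_G(v)\cap(N_G(w_1)\cup N_G(w_2))$, whose size is bounded by $|N_G(v)\cap N_G(w_1)|+|N_G(v)\cap N_G(w_2)|$; by the common-neighbor control in \itm{c} this remains small enough for Breaker's two priority claims per round to keep pace. The main obstacle is to handle Connector's adaptive play, since she will steer her tree through vertices maximizing the number of common neighbours with $v$; this is handled by the careful choice of $v$ (exploiting $K_4$-freeness and the scarcity of $4$-cycles and other dense configurations in $G$) together with a union bound taken over the substantially smaller family of connected subtrees of $G$ rather than arbitrary vertex subsets, which makes the argument succeed uniformly against any Connector strategy.
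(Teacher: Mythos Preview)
Your overall plan---choose a target vertex after Connector's first move and defend it using sparsity properties of $G$---is the same as the paper's. The gap is in the step you flag yourself as the ``main obstacle'': the inequality (number of newly dangerous edges per round) $\leq 2$ does not follow from common-neighbour bounds alone, and restricting Breaker to claim only edges incident to $v$ is too weak. Concretely, at $p=n^{-2/3-\eps}$ the expected number of $4$-cycles through a fixed $v$ is of order $n^{1/3-4\eps}\to\infty$, so for \emph{every} choice of $v$ there are many vertices $w$ with $|N_G(w)\cap N_G(v)|\geq 2$. If in some round Connector adds such a $w$ together with a second new vertex $w'$ having even one further common neighbour with $v$, three or more edges at $v$ become dangerous simultaneously; Breaker claims two, and Connector reaches $v$ on the following move via the remaining one. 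Your appeal to ``scarcity of $4$-cycles'' is therefore false in this regime, and a union bound over connected subtrees does not help, since Connector is adaptive and will steer towards whichever $C_4$ you failed to neutralise.

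The paper resolves this not by strengthening the common-neighbour bound but by replacing the crude invariant ``no unclaimed dangerous edge at $v$'' with a layered one. It iteratively builds sets $B_1^x=N_G(x)$, $B_2^x=\{w:|N_G(w)\cap B_1^x|\geq 2\}$, $B_3^x,\ldots$, and proves that (for a good $x$) each vertex of $B_i^x$ has \emph{exactly} two neighbours in $B_{\leq i}^x$. Breaker then maintains: for every free edge $vw$ with $v\in B_i^x\cap (N_G(V_C)\setminus V_C)$ and $w\in V_C$, one has $w\in B_{<i}^x$. The exact-two-neighbours property forces at most two edges to violate this invariant after any Connector move, so Breaker can always restore it---but the edges he claims are in general not incident to $x$. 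In short, the missing idea is that Breaker must sometimes block an edge one step away from $x$ inside the iterated ``bad'' structure, and the analysis hinges on the degree-two rigidity of that structure rather than on raw codegree bounds.
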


\begin{thm}\label{main_connector}
  Let $\eps >0$ be a constant. For $p\geq n^{-2/3+\eps}$ a random graph $G\sim G_{n,p}$ a.a.s.~has the following property:
  Playing a $(2:2)$ Connector-Breaker game
  on the edge set of $G$, Connector has a strategy to claim a
  spanning tree.
\end{thm}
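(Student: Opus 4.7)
\textbf{Proof plan for Theorem~\ref{main_connector}.}
I would exhibit an explicit strategy for Connector and verify it succeeds a.a.s.\ once $G\sim G_{n,p}$ enjoys a short list of pseudo-random properties. Standard Chernoff plus union-bound arguments yield that for $p\geq n^{-2/3+\eps}$ the graph $G$ a.a.s.\ satisfies:
(P1) every vertex has $G$-degree $(1\pm o(1))np$;
(P2) edge expansion $e_G(S,V\setminus S)=(1\pm o(1))|S|(n-|S|)p$ for every $S$ with $|S|,|V\setminus S|\geq n^{1/3}$;
(P3) local sparsity: every pair of vertices has $O(\log n)$ common neighbours and every set of $k\leq n^{1/3}$ vertices spans $O(k)$ edges; and
(P4) once $|S|p\gg\log n$, all but $o(n)$ vertices $v\in V\setminus S$ satisfy $d_G(v,S)=(1\pm o(1))|S|p$.

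\emph{Strategy.} Connector maintains her graph as a tree $T$ and tries to add up to two new vertices to $V(T)$ each round. A brief initialisation round starts $T$ at a chosen high-degree root $v_0$. In the subsequent expansion phase she uses a priority rule based on
$d^{\ast}(v):=|\{u\in V(T):uv\in E(G)\setminus B\}|$ for $v\notin V(T)$, where $B$ is Breaker's current edge set; $v$ is \emph{endangered} if $d^{\ast}(v)$ is below a small fraction of the typical value $|V(T)|p$. If endangered vertices exist, Connector spends both of her round's edges absorbing them; otherwise she picks any two outside vertices and attaches them as leaves of $T$. Once $|V(T)|\geq n-o(n)$, she enters an endgame phase that absorbs the remaining set $R$ via a Hall-type matching in the bipartite graph $E_G(R,V(T))\setminus B$, walked through two edges at a time with augmenting-path swaps whenever Breaker breaks a planned match.

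\emph{Analysis.} The total damage $D:=\sum_{v\notin V(T)}|\{u\in V(T):uv\in B\}|$ is at most $|B|=O(n)$ throughout the $O(n)$-round game. Together with (P2) and (P4), this gives that once $|V(T)|$ exceeds $C\log n/p$ the number of endangered vertices is at most $O(D/(|V(T)|p))=O(1/p)=O(n^{2/3-\eps})=o(n)$, while each Breaker round can newly endanger only $O(1)$ outside vertices; the priority rule therefore absorbs endangered vertices at least as fast as they arise. The Hall condition in the endgame follows from (P1) and $|B|=O(n)$, which is far smaller than the $\Omega(np)$ Breaker-free $G$-edges each $v\in R$ has into $V(T)$.

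\emph{Main obstacle.} The delicate point is the race between tree growth and targeted blocking during the early expansion, when $|V(T)|\leq\log n/p$ is too small for uniform degree bounds to apply. There one must show that Connector can \emph{steer} her tree toward any would-be-isolated vertex $v$ within $O(1)$ dedicated rounds, essentially by running a short BFS through Breaker-free edges from $V(T)$ to $v$; (P3) prevents Breaker from concentrating his blocks on a short neighbourhood of $v$, while (P1) supplies enough branching for the BFS to succeed. A secondary technicality is respecting the connectivity constraint within a round: the two claimed edges must, together with $T$, stay connected, so each typically has an endpoint in $V(T)$ or forms a length-two path with the other. Combining the early-phase steering argument cleanly with the counting of endangered vertices in the later phase is where most of the technical work will sit.
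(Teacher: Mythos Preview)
Your proposal has a genuine gap at precisely the point you yourself flag as the ``main obstacle,'' and this gap is not a technicality but the heart of the problem.

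At $p=n^{-2/3+\eps}$ the graph has diameter about $3$ and every vertex has degree $\Theta(n^{1/3+\eps})$. Your expansion phase says that if an endangered vertex $v$ exists, Connector ``spends both of her round's edges absorbing'' it --- but this presumes $v$ is at distance at most $2$ from $V(T)$ in $G\setminus B$. It need not be. Suppose Breaker targets a single vertex $x$ from the start. After $\tfrac12 n^{1/3+\eps}$ rounds he has claimed half of $x$'s incident edges, so $x$ is endangered; yet at that moment $|V(T)|\approx n^{1/3+\eps}$ and the expected number of $G$-edges from $x$ to $V(T)$ is $|V(T)|p=n^{-1/3+2\eps}=o(1)$, so with high probability $x$ is not even adjacent to $V(T)$, let alone via a Breaker-free edge. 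Your ``early phase'' in fact lasts until $|V(T)|\approx n^{2/3-\eps}\log n$, which covers the first $\Theta(n^{2/3})$ rounds --- ample time for Breaker to completely isolate a chosen vertex of degree $n^{1/3+\eps}$ unless Connector has a concrete mechanism to reach it first.

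Your suggested fix, ``run a short BFS from $V(T)$ to $v$,'' does not supply such a mechanism: a BFS path has no redundancy, and Breaker can cut each candidate length-$2$ or length-$3$ path as fast as Connector finds it. What is actually needed (and what the paper builds) is, for every target $x$, a full binary tree $\mathcal{T}_k$ of constant depth $k=\Theta(1/\eps)$ rooted inside $V(T)$ with all $2^{k-1}$ leaves adjacent to $x$; Connector descends this tree one level per round, and since each level offers four untouched continuations while Breaker blocks at most two, she reaches $x$ in $k$ rounds regardless of Breaker's play. Establishing that such trees survive in $G\setminus B$, and interleaving this with a Box-game argument that caps $d_B(v)$ for all $v\notin V(T)$, is the actual content of the proof; properties (P1)--(P4) and a greedy priority rule are not enough.
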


\medskip

\subsection{Organization of the paper}~
The main focus of this paper is proving Theorem~\ref{main_breaker} and Theorem~\ref{main_connector}. In Section \ref{sec:Preliminaries} we will give an overview over all required tools. In the Sections~\ref{sec:Breaker's strategy} and~\ref{sec:Connector's strategy} we will describe Breaker's and Connector's strategy respectively. We will also state some lemmas, from which it will follow that the given strategies succeed almost surely for the respective ranges of the edge probability $p$. We postpone the proofs of these lemmas to Section~\ref{sec:algorithm} (for Breaker's strategy) and Section~\ref{sec:structures} (for Connector's strategy). Finally, we will give some concluding remarks in Section \ref{sec:concluding}.

%%%%%%%%%%%%%%%%%%%%%%%%%%%%%%%%%%%%%%%%%%%%%%%%%%%%%%%%%%%%%%%
% Notation
%%%%%%%%%%%%%%%%%%%%%%%%%%%%%%%%%%%%%%%%%%%%%%%%%%%%%%%%%%%%%%%

\subsection{Notation and terminology}~
The game-theoretic and graph-theoretic notation in our paper is rather standard and most of the times it follows the notation of \cite{HKSS2014} and \cite{W2001}.

For a positive integer $n$, we set $[n]:=\{k\in\mathbb{N}:~ 1\leq k\leq n\}$.
For a graph $G=(V,E)$ we write $V(G)$ and $E(G)$
for the vertex set and the edge set of $G$, respectively.
If $\{v,w\}$ is an edge from $E(G)$, we denote it with $vw$ for short.
A vertex $w$ is called a neighbour of $v$ in $G$
if $vw\in E(G)$ holds.
The neighbourhood of $v$ in $G$ is
$N_G(v)=\{w\in V(G):~ vw\in E\}$,
and with $d_G(v)=|N_G(v)|$ we denote the degree of $v$ in $G$.
Let subsets $A,B\subset V(G)$ be given. We let
$N_G(v,A)=N_G(v)\cap A$ be the 
neighbourhood of $v$ in $A$, and we set
$d_G(v,A)=|N_G(v,A)|$ to be the degree of $v$ into $A$. 
Moreover, we let
$N_G(A):=\bigcup_{v\in A} N_G(v)$,
$e_G(A):=\{vw\in E(G):~ v,w\in A\}$
and
$e_G(A,B):=\{vw\in E(G):~ v\in A,w\in B\}$.

Let two graphs $H$ and $G$ be given.
If $V(H)\subset V(G)$ and $E(H)\subset E(G)$ holds, 
we call $H$ a subgraph of $G$,
and we write $H\subset G$ for short.
We also let $G\setminus H=(V(G),E(G)\setminus E(H))$ in this case.
If there is a bijection $f:V(H)\rightarrow V(G)$
such that $vw\in E(H)$ holds if and only if
$f(v)f(w)\in E(G)$ holds, the two graphs $H$ and $G$ are called isomorphic (denoted with $H\cong G$), and we also say that $H$ is a copy of $G$ in this case. 

A path $P$ with $V(P)=\{v_1,v_2,\ldots,v_k\}$ and
$E(P)=\{v_iv_{i+1}:~ 1\leq i\leq k-1\}$
will be represented by its sequence of
vertices, e.g. $P=(v_1,v_2,\ldots,v_k)$.
Its length is its number of edges.
%{\color{red}Todo:tree, root, level, child, full binary tree, directed edges}

Assume that some Connector-Breaker game, played on the edge set of some graph $G$, is in progress. At any moment during the game, let $C$ be the graph consisting of Connector's edges and let $B$ be the graph consisting of Breaker's edges. For short,
also set $V_C=V(C)$, $E_C=E(C)$ and $E_B=E(B)$.
If an edge belongs to $B\cup C$, we call it claimed;
otherwise it is called free.

Given a distribution $\mathcal{D}$ and a random variable
$X$, we write $X\sim \mathcal{D}$ for $X$ being sampled according to the distribution $\mathcal{D}$.
With $Bin(n,p)$ we denote the binomial distribution
with parameters $n$ and $p$.
Moreover, with $G_{n,p}$ we denote 
the Erd\H{o}s-Renyi random graph model on $n$ vertices and with edge probability $p$.
If $X$ is a random variable, we let $\Exp(X)$ denote its expectation. If $\E$ is an event, we let $\Prob(\E)$ denote its probability.
A sequence of events $\E_n$ is said to hold
asymptotically almost surely (a.a.s.) if 
$\Prob(\E_n)\rightarrow 1$ for $n\rightarrow \infty$.

Our main results are asymptotic.
Whenever necessary, we will assume $n$ to be large enough.
We will not optimize constants,
and whenever these are not crucial, we
will omit rounding signs.

%%%%%%%%%%%%%%%%%%%%%%%%%%%%%%%%%%%%%%%%%%%%%%%%%%%%%%%%%%%%%%%
%%%%%%%%%%%%%%%%%%%%%%%%%%%%%%%%%%%%%%%%%%%%%%%%%%%%%%%%%%%%%%%
%
% Preliminaries: Tools
%
%%%%%%%%%%%%%%%%%%%%%%%%%%%%%%%%%%%%%%%%%%%%%%%%%%%%%%%%%%%%%%%
%%%%%%%%%%%%%%%%%%%%%%%%%%%%%%%%%%%%%%%%%%%%%%%%%%%%%%%%%%%%%%%

\section{Preliminaries}\label{sec:Preliminaries}

%%%%%%%%%%%%%%%%%%%%%%%%%%%%%%%%%%%%%%%%%%%%%%%%%%%%%%%%%%%%%%%
% Positional games - Tools
%%%%%%%%%%%%%%%%%%%%%%%%%%%%%%%%%%%%%%%%%%%%%%%%%%%%%%%%%%%%%%%

\subsection{Maker-Breaker Box game} ~
A simple, yet very useful positional game is the following one, introduced by Chv\'atal and Erd\H{o}s~\cite{CE1978}, which usually is 
helpful to describe strategies that aim to bound the degrees in the opponent's graph. 
The game $Box(p,1;a_1,\ldots,a_n)$
is played on a hypergraph $(X,\mathcal{H})$,
with $\mathcal{H}=\left\{F_1,\ldots,F_n\right\}$
consisting of $n$ pairwise disjoint hyperedges (called {\em boxes}),
satisfying $|F_i|=a_i$ for every $i\in [n]$.
In every round, BoxMaker claims at most $p$ elements from 
$X$ that have not been claimed before, 
while BoxBreaker solely claims one such element.
If, throughout the game, BoxMaker succeeds in claiming all the elements of a box $F_i$, she is declared the winner of the game. Otherwise, i.e. when BoxBreaker succeeds in claiming at least one element in each box, BoxBreaker wins.
The following lemma is a well-known criterion for BoxBreaker to have a winning strategy in the Box game (see e.g.~\cite{CE1978}, \cite{HKSS2014}).

\begin{lemma}\label{lem:boxgame}
Let $a_i=m$ for every $i\in [n]$ and assume that $m>p(\ln n + 1)$, then BoxBreaker wins the game $Box(p,1;a_1,\ldots,a_n)$.
A winning strategy $\mathcal{S}$ is the following one: in every round, BoxBreaker claims an element which belongs to a box that he does not have an element from and which, among all such boxes, contains the largest number of Maker's elements. 
\end{lemma}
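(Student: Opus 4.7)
The lemma is a classical criterion in positional-game theory due to Chv\'atal and Erd\H{o}s~\cite{CE1978}; my plan is to follow the standard potential-function proof. Suppose, for contradiction, that BoxMaker defeats the greedy strategy $\mathcal{S}$ described in the statement, and let $T$ be the first round at the end of which some box $F^\ast$ untouched by BoxBreaker has all of its $m$ elements claimed by BoxMaker.

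The idea is to introduce the weight function
\[
\Phi_t \;=\; \sum_{F\text{ untouched at end of round }t} \alpha^{\,x_F(t)},
\]
where $x_F(t)$ is the number of BoxMaker elements in $F$ at the end of round $t$ and $\alpha > 1$ is a parameter to be tuned (for instance $\alpha$ close to $1 + 1/p$). One round is then analysed in two parts:
(i) During BoxMaker's move she distributes at most $p$ new elements among the untouched boxes, and the convexity of $y \mapsto \alpha^y$ on $[0,p]$ together with the bound $(1+1/p)^p \le e$ gives that this move increases $\Phi$ by at most $(e-1)\cdot \max_F \alpha^{x_F(t)}$.
(ii) By the greedy rule $\mathcal{S}$, BoxBreaker removes the summand whose weight is maximal after BoxMaker's move, which (since BoxMaker's additions can only increase weights) is at least $\max_F \alpha^{x_F(t)}$.

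Combining (i) and (ii) with the trivial lower bound $\max_F \alpha^{x_F(t)} \ge \Phi_t/(n-t)$ yields a recurrence of the form
\[
\Phi_{t+1} \;\le\; \Phi_t\cdot \Bigl(1 - \frac{c}{n-t}\Bigr)
\]
for an appropriate constant $c > 0$, which iterates from $\Phi_0 = n$ and telescopes (thanks to $\prod_{t=0}^{T-1}(n-t-1)/(n-t) = (n-T)/n$) to an upper bound on $\Phi_T$ of the form $n\bigl((n-T)/n\bigr)^{c}$. On the other hand, since $F^\ast$ is still untouched at the end of round $T$ and holds $m$ BoxMaker elements, $\Phi_T \ge \alpha^m$. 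Comparing the two bounds and taking logarithms gives, after tuning $\alpha$, the inequality $m \le p(\ln n + 1)$, contradicting the hypothesis.

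The principal technical obstacle is the sharp calibration of $\alpha$ and of the constant $c$: a crude application of the convexity step only yields a bound $m \le C p \ln n$ with some $C > 1$ (typically $C = 1/\ln 2$ when one just takes $\alpha = 2^{1/p}$), and obtaining the exact factor $\ln n + 1$ demands careful interplay between the convexity-based estimate for BoxMaker's increase, the average-versus-maximum bound, and the harmonic-type decrease $1/(n-t)$ arising from the greedy rule.
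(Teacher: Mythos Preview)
The paper does not prove this lemma at all: immediately after stating it, the authors write that the first sentence is Theorem~3.4.1 in~\cite{HKSS2014} (originally due to Chv\'atal and Erd\H{o}s~\cite{CE1978}) and that the strategy $\mathcal{S}$ is taken from its proof there. So there is no in-paper argument to compare against; the lemma is quoted as a known result.

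Regarding your sketch itself, there is a genuine gap in the recurrence step. From your (i) and (ii) one obtains
\[
\Phi_{t+1}\;\le\;\Phi_t+(\alpha^p-1)\,M_t-M_t',
\]
where $M_t=\max_F\alpha^{x_F(t)}$ and $M_t'\ge M_t$ is the maximum after BoxMaker's move. This yields only $\Phi_{t+1}\le\Phi_t+(\alpha^p-2)M_t$. For the choices you suggest ($\alpha=e^{1/p}$, or $\alpha$ near $1+1/p$) one has $\alpha^p\ge 2$ as soon as $p\ge 1$, so the coefficient $\alpha^p-2$ is nonnegative and the inequality does \emph{not} give a decrease; substituting the lower bound $M_t\ge\Phi_t/(n-t)$ then moves the estimate in the wrong direction. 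Taking $\alpha<2^{1/p}$ does produce a decreasing potential, but---as you already note---the resulting bound is $m\le (p/\ln 2)\ln n$, strictly weaker than the stated $p(\ln n+1)$. Your closing paragraph acknowledges this shortfall without resolving it, so the argument as written does not establish the lemma. (A minor additional issue: the product $\prod_{t}(1-c/(n-t))$ telescopes to $(n-T)/n$ only when $c=1$.)

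For reference, the proof in~\cite{HKSS2014,CE1978} proceeds quite differently. One shows by an exchange argument that, against the greedy BoxBreaker strategy $\mathcal{S}$, BoxMaker cannot do better than the \emph{balanced} strategy of distributing her $p$ elements evenly over the currently untouched boxes. Under balanced play the single box surviving to round $n$ accumulates exactly $p\bigl(\tfrac{1}{n}+\tfrac{1}{n-1}+\cdots+\tfrac{1}{1}\bigr)=pH_n\le p(\ln n+1)$ Maker elements, which gives the sharp constant directly.
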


In fact, the first sentence in the above lemma is Theorem 3.4.1 in~\cite{HKSS2014}, while the mentioned strategy is contained in its proof. As an immediate corollary of the above lemma we obtain the following:

\begin{cor}\label{cor:boxgame}
Let BoxMaker and BoxBreaker play the game 
$Box(p,1;a_1,\ldots,a_n)$ with boxes $F_i$ of size 
$|F_i|=a_i\geq m$. Then following the strategy $\mathcal{S}$
from Lemma~\ref{lem:boxgame}, BoxBreaker can guarantee that the following holds for every $i\in[n]$ throughout the game: as long as he does not claim an element in $F_i$,
the number of BoxMaker's elements in $F_i$ is bounded
by $p(\ln n + 1)$.  
\end{cor}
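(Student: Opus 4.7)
The plan is to derive the corollary from Lemma~\ref{lem:boxgame} by a coupling with an auxiliary game in which every box has the common size $m_0:=\lfloor p(\ln n+1)\rfloor+1$. Note that $m_0>p(\ln n+1)$, so Lemma~\ref{lem:boxgame} applies: BoxBreaker playing strategy $\mathcal{S}$ wins $\mathrm{Box}(p,1;m_0,\ldots,m_0)$, which means that BoxMaker never manages to claim all elements of any single auxiliary box.

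For each $i\in[n]$ I would fix an auxiliary box $F_i^{\circ}$ of size exactly $m_0$: take $F_i^{\circ}\subseteq F_i$ if $a_i\geq m_0$, and $F_i$ together with $m_0-a_i$ fresh dummy elements otherwise. The two games are then run in parallel and coupled as follows: whenever a player claims an element $x\in F_i$ in the real game, the same player claims, in the auxiliary game, the element $x$ itself if $x\in F_i^{\circ}$, and otherwise an arbitrary still-free non-dummy element of $F_i^{\circ}$. Because strategy $\mathcal{S}$ only reads per-box Maker-counts and the Breaker-touched indicator, and the coupling is designed to keep these two statistics identical across the games, BoxBreaker's real and auxiliary moves can be taken to coincide. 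Lemma~\ref{lem:boxgame} applied to the auxiliary game then guarantees that no $F_i^{\circ}$ is ever completely claimed by BoxMaker; hence for every $i$ the Maker-count in $F_i$ in the real game equals the Maker-count in $F_i^{\circ}$ in the auxiliary game, and is therefore bounded by $m_0-1\leq p(\ln n+1)$, as long as Breaker has not yet touched $F_i$.

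The one point requiring care is that the coupling stays well-defined round by round: whenever BoxMaker places several of her $p$ claims inside a single real box, the corresponding auxiliary box must still contain enough free non-dummy elements to mirror those moves. Since $m_0>p$ for every $n\geq 1$ (because $p(\ln n+1)\geq p$) and since Lemma~\ref{lem:boxgame} forbids any auxiliary box from being saturated by BoxMaker throughout the whole play, a straightforward induction on rounds shows that free non-dummy slots always remain, so the coupling proceeds without interruption. This bookkeeping is the only technical step; the conclusion of the corollary is then immediate from the coupling together with the box-game bound.
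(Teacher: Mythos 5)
Your proposal is correct and uses the natural coupling that makes the paper's ``immediate corollary'' precise: run an auxiliary game $Box(p,1;m_0,\ldots,m_0)$ with $m_0=\lfloor p(\ln n+1)\rfloor+1>p(\ln n+1)$ alongside the real game, mirror claims into fixed subsets $F_i^{\circ}\subseteq F_i$ of size $m_0$, observe that strategy $\mathcal{S}$ reads only Maker-counts in untouched boxes (which the coupling preserves there), and invoke Lemma~\ref{lem:boxgame} on the auxiliary game. Since the paper itself supplies no proof, there is no alternative route to compare against; yours is the expected one.

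One sentence in your last paragraph should be tightened. The claim that ``free non-dummy slots always remain'' is not literally true once BoxBreaker has touched a box $F_i$: in a touched box $F_i^{\circ}$ you can have $m_0-1$ Maker elements plus the one Breaker element, so $F_i^{\circ}$ is exhausted even though BoxMaker is not forbidden from claiming further elements of $F_i$ in the real game (when $a_i>m_0$). The correct statement is that free slots always remain in $F_i^{\circ}$ \emph{as long as BoxBreaker has not touched $F_i$}; this is exactly where Lemma~\ref{lem:boxgame} gives the bound $\leq m_0-1$ Maker elements. For touched boxes, simply drop the mirror move from the auxiliary transcript (legal, since BoxMaker may claim at most $p$, not exactly $p$, elements per round); this changes nothing that $\mathcal{S}$ can see, because $\mathcal{S}$ never considers a touched box again. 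With this small adjustment the round-by-round induction closes, and the rest of your argument is fine. The appeal to $m_0>p$ is not actually needed once the untouched-box count is bounded via the lemma.
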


%%%%%%%%%%%%%%%%%%%%%%%%%%%%%%%%%%%%%%%%%%%%%%%%%%%%%%%%%%%%%%%
% Probabilistic tools
%%%%%%%%%%%%%%%%%%%%%%%%%%%%%%%%%%%%%%%%%%%%%%%%%%%%%%%%%%%%%%%

\subsection{Probabilistic tools and basic properties of $G_{n,p}$} 
In this section we present a few bounds on large deviations
of random variables that will be used to identify typical edge
distributions in a random graph $G\sim G_{n,p}$. 
Most of the time, we will use the following inequalities due to Chernoff (see e.g.~\cite{AS2008}, \cite{JLR2000}).

\begin{lemma}\label{lem:Chernoff1}
If $X \sim Bin(n,p)$, then
\begin{itemize}
    \item $\Prob(X<(1-\delta)np)< \exp\left(-\frac{\delta^2np}{2}\right)$ for every $\delta>0$ , and
    \item $\Prob(X>(1+\delta)np)< \exp\left(-\frac{np}{3}\right)$ for every $\delta\geq 1$.
\end{itemize}
\end{lemma}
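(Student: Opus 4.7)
The plan is to prove both tail bounds via the classical Chernoff method, i.e., by applying Markov's inequality to an exponential moment of $X$ and then optimizing the free parameter. Write $X=\sum_{i=1}^n X_i$ as a sum of independent Bernoulli$(p)$ variables. For any real $\lambda$, independence gives $\Exp[e^{\lambda X}]=\prod_{i=1}^n \Exp[e^{\lambda X_i}]=(1-p+pe^{\lambda})^n$, and the elementary inequality $1+x\leq e^x$ upgrades this to
\[
\Exp[e^{\lambda X}]\leq \exp\bigl(np(e^{\lambda}-1)\bigr).
\]

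For the upper tail, fix $\delta\geq 1$, take $\lambda>0$, and apply Markov to $e^{\lambda X}$:
\[
\Prob\bigl(X>(1+\delta)np\bigr)\leq \frac{\Exp[e^{\lambda X}]}{e^{\lambda(1+\delta)np}}\leq \exp\Bigl(np\bigl(e^{\lambda}-1-\lambda(1+\delta)\bigr)\Bigr).
\]
Optimizing in $\lambda$ (i.e. setting $e^{\lambda}=1+\delta$) yields the well-known bound $\exp(-np\,\varphi(\delta))$ with $\varphi(\delta)=(1+\delta)\ln(1+\delta)-\delta$. Next I would verify by elementary calculus that $\varphi$ is increasing on $[0,\infty)$ and that $\varphi(1)=2\ln 2-1>\tfrac13$; combined with $\varphi(\delta)\geq\varphi(1)$ for $\delta\geq 1$, this gives $\exp(-np\,\varphi(\delta))\leq \exp(-np/3)$, as required.

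For the lower tail, fix $0<\delta<1$ and run the same argument with $\lambda<0$: applying Markov to $e^{\lambda X}$ with the inequality reversed (since $\lambda<0$ flips $X<(1-\delta)np$ into $e^{\lambda X}>e^{\lambda(1-\delta)np}$) gives
\[
\Prob\bigl(X<(1-\delta)np\bigr)\leq \exp\Bigl(np\bigl(e^{\lambda}-1-\lambda(1-\delta)\bigr)\Bigr),
\]
and optimizing with $e^{\lambda}=1-\delta$ yields $\exp(-np\,\psi(\delta))$ where $\psi(\delta)=(1-\delta)\ln(1-\delta)+\delta$. To reach the clean form in the lemma, I would expand $\ln(1-\delta)=-\sum_{k\geq 1}\delta^k/k$ and collect terms to obtain $\psi(\delta)=\sum_{k\geq 2}\delta^k/(k(k-1))\geq \delta^2/2$ for $\delta\in(0,1)$, which immediately gives the stated $\exp(-\delta^2 np/2)$ bound.

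The conceptual part --- introducing the exponential, using independence, and optimizing --- is routine; the step I expect to demand a little more care is the final analytic comparison, in particular verifying the inequalities $\varphi(\delta)\geq\tfrac13$ on $[1,\infty)$ and $\psi(\delta)\geq\delta^2/2$ on $(0,1)$. Both are classical and follow from the Taylor series of $(1\pm\delta)\ln(1\pm\delta)$, but they are where the specific numerical constants $\tfrac12$ and $\tfrac13$ appearing in the statement are actually pinned down, so I would write these calculations out carefully rather than cite them.
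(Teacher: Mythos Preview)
Your argument is the standard Chernoff derivation and is correct. One small omission: for the lower tail you restrict to $0<\delta<1$, whereas the lemma is stated for all $\delta>0$; you should note that for $\delta\geq 1$ the event $X<(1-\delta)np$ has probability zero (since $X\geq 0$), so the bound holds trivially.

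As for comparison with the paper: the paper does not give a proof of this lemma at all. It is quoted as a known inequality with references to Alon--Spencer and Janson--\L uczak--Ruci\'nski, and is used as a black box throughout. Your exponential-moment argument is exactly the textbook proof one finds in those references, so there is no methodological difference to discuss---you have simply supplied what the paper chose to cite.
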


\begin{lemma}\label{lem:Chernoff2}
Let $X \sim Bin(n,p)$ with expectation $\mu=\Exp(X)$, and 
let $k \geq 7\mu$, then
$$\Prob(X \geq k) \leq e^{-k}.$$
\end{lemma}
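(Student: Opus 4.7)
The plan is to derive this tail bound from the moment generating function (Chernoff) approach rather than from Lemma~\ref{lem:Chernoff1}, since the upper tail bound stated there only decays exponentially in $\mu$, and we need decay in $k$. For $X \sim Bin(n,p)$ one has $\Exp(e^{\lambda X}) = (1-p+pe^{\lambda})^n \leq \exp(\mu(e^{\lambda}-1))$, so Markov's inequality applied to $e^{\lambda X}$ yields
$$\Prob(X \geq k) \leq \exp\bigl(-\lambda k + \mu(e^{\lambda}-1)\bigr)$$
for every $\lambda > 0$. I would then choose the optimiser $\lambda = \ln(k/\mu)$, which is positive because $k \geq 7\mu > \mu$, and this substitution gives
$$\Prob(X \geq k) \leq \exp\bigl(-k\ln(k/\mu) + k - \mu\bigr).$$

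The next step is to show that the exponent is at most $-k$, i.e.\ that $k\ln(k/\mu) \geq 2k - \mu$, or equivalently, setting $s := k/\mu \geq 7$, that $\ln s \geq 2 - 1/s$. This is the heart of the argument. I would verify the inequality at the boundary $s=7$ directly (one has $\ln 7 \approx 1.946$ versus $2 - 1/7 \approx 1.857$, so it holds with a small amount of slack) and then extend it to all $s \geq 7$ by noting that the derivative of $\ln s - 2 + 1/s$ equals $(s-1)/s^2 \geq 0$ on $[1,\infty)$. Combining this with the displayed Chernoff inequality above yields $\Prob(X \geq k) \leq e^{-k}$ as claimed.

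The main (mild) obstacle is that the constant $7$ in the hypothesis is tight enough that the naive bound $\Prob(X \geq k) \leq (e\mu/k)^k \leq (e/7)^k$, which one obtains by throwing away the $-\mu$ term in the exponent, does \emph{not} suffice, since $e/7 \approx 0.388 > e^{-1}$. It is precisely the extra $-\mu$ contribution appearing in the sharper form of Chernoff's inequality that tips the inequality $\ln s \geq 2 - 1/s$ into our favour at $s = 7$; dropping that term would force us to require a constant of at least $e^{2} \approx 7.39$ in place of $7$. Apart from this careful numerical accounting at the boundary, the argument is entirely standard and amounts to a one-variable calculus check.
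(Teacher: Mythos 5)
Your proof is correct and complete: the MGF bound, the choice $\lambda=\ln(k/\mu)$, the reduction to $\ln s\geq 2-1/s$ for $s=k/\mu\geq 7$, and the verification at $s=7$ together with monotonicity of $\ln s-2+1/s$ all check out (and your observation that the crude bound $(e\mu/k)^k$ would need $k\geq e^2\mu$ is accurate). The paper itself offers no proof of this lemma --- it is quoted as a standard Chernoff-type inequality from the cited references --- and your argument is precisely the standard derivation one finds there, so there is nothing to compare beyond noting that you have supplied the omitted details correctly.
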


Moreover, we will make use of the well-known Markov inequality (see e.g.~\cite{JLR2000}).

\begin{lemma}\label{lem:markov}
Let $X\geq 0$ be a random variable. For every $t\geq 0$ it holds that
$$
\Prob\left( X\geq t \right)
\leq \frac{\Exp(X)}{t}~ .
$$
\end{lemma}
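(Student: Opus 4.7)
The plan is to exploit the simple pointwise comparison between $X$ and a scaled indicator of the event $\{X \geq t\}$. Since $X$ is non-negative, for every sample point $\omega$ the inequality $t \cdot \mathbf{1}_{\{X(\omega) \geq t\}} \leq X(\omega)$ holds: when $X(\omega) \geq t$, the left-hand side equals $t$ which is at most $X(\omega)$ by assumption; otherwise the left-hand side is $0$ while $X(\omega) \geq 0$. This reduction of the probability to an expectation is the only nontrivial idea.

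Taking expectations of this pointwise inequality and using linearity and monotonicity of $\Exp$, I obtain
\[
t \cdot \Prob(X \geq t) \;=\; \Exp\bigl(t \cdot \mathbf{1}_{\{X \geq t\}}\bigr) \;\leq\; \Exp(X).
\]
For $t > 0$, dividing both sides by $t$ yields the claimed bound $\Prob(X \geq t) \leq \Exp(X)/t$.

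The only minor point to address is the boundary case $t = 0$, where the right-hand side is undefined (or $+\infty$) and the statement becomes vacuous; this can be handled either by adopting the convention $\Exp(X)/0 = +\infty$ or by noting that in all subsequent applications $t$ is strictly positive. There is no genuine obstacle: the argument is the classical two-line proof of Markov's inequality and goes through for any non-negative random variable on any probability space, so nothing particular to the random graph setting is needed here.
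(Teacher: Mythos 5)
Your proof is correct: it is the classical two-line argument for Markov's inequality (bound $t\cdot\mathbf{1}_{\{X\geq t\}}\leq X$ pointwise, take expectations, divide by $t>0$), and the paper itself states this lemma without proof, citing a standard reference. Nothing further is needed.
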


As a first application of Chernoff's inequalities
we will prove a few simple bounds on degrees 
that are very likely to hold in a random graph $G\sim G_{n,p}$.

\begin{lemma}\label{lem:degree_bound}
Let $\eps>0$, $p=n^{-2/3-\eps}$ and let $G\sim G_{n,p}$.
Then with probability at least ${1-\exp(-n^{1/3-2\eps})}$
every vertex $v\in V(G)$ satisfies
\begin{equation}\label{cl:deg}
d_G(v)<2n^{\frac13-\eps}~ .
\end{equation}
\end{lemma}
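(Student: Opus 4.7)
The plan is the standard concentration-plus-union-bound template. For a fixed vertex $v \in V(G)$, the degree $d_G(v)$ is distributed as $\mathrm{Bin}(n-1, p)$ with mean $\mu = (n-1)p$. Since $p = n^{-2/3-\eps}$, we have $\mu = (1+o(1))\, n^{1/3-\eps}$, and in particular $2\mu = 2(n-1)p < 2np = 2n^{1/3-\eps}$. Hence any realisation of $d_G(v)$ that reaches $2n^{1/3-\eps}$ automatically exceeds $2\mu$, i.e. it constitutes an upper-tail deviation by a factor of at least $1 + \delta$ with $\delta = 1$.

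Next I would invoke the second bullet of Lemma~\ref{lem:Chernoff1} with $\delta = 1$ to obtain
\[
\Prob\bigl(d_G(v) \geq 2 n^{1/3 - \eps}\bigr) \;\leq\; \Prob\bigl(d_G(v) > 2\mu\bigr) \;<\; \exp\!\left(-\tfrac{(n-1)p}{3}\right).
\]
Then a union bound over the $n$ vertices yields
\[
\Prob\bigl(\exists\,v \in V(G):\, d_G(v) \geq 2 n^{1/3 - \eps}\bigr) \;\leq\; n \exp\!\left(-\tfrac{(n-1)p}{3}\right) \;=\; \exp\!\left(\ln n - \tfrac{(n-1)p}{3}\right).
\]

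The remaining task is purely arithmetic: to show that for $n$ large enough,
\[
\ln n - \tfrac{(n-1)p}{3} \;\leq\; -n^{1/3 - 2\eps},
\]
which rearranges to $\tfrac{(n-1)p}{3} - \ln n \geq n^{1/3 - 2\eps}$. Since $(n-1)p / 3 = (1+o(1))\, n^{1/3 - \eps}/3$ and the ratio $n^{1/3-\eps}/n^{1/3-2\eps} = n^{\eps}$ tends to infinity, the left-hand side dominates $n^{1/3 - 2\eps}$ (the $\ln n$ correction being negligible). I do not expect any genuine obstacle: the lemma is a routine Chernoff estimate, and the only step requiring a moment of care is checking that $2(n-1)p$ is indeed bounded above by $2n^{1/3-\eps}$ so that a single application of Chernoff with $\delta=1$ suffices, and that the gap between the two exponents $n^{1/3-\eps}$ and $n^{1/3-2\eps}$ comfortably absorbs both the $\ln n$ from the union bound and the constant factor $1/3$.
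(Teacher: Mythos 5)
Your argument is correct and takes essentially the same route as the paper: both view $d_G(v)\sim Bin(n-1,p)$, apply the second bullet of Lemma~\ref{lem:Chernoff1} with $\delta=1$ to get a tail bound of the form $\exp(-\Theta(n^{1/3-\eps}))$, and conclude with a union bound over the $n$ vertices, the extra factor $n^{\eps}$ in the exponent absorbing both the $\ln n$ from the union bound and the constant $1/3$.
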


\begin{proof}
For $v\in V(G)$ we have $d_G(v)\sim Bin(n-1,p)$
with $\Exp(d_G(v))=(n-1)p\sim np$. Applying 
Lemma~\ref{lem:Chernoff1} we deduce that
$
\Prob\left(d_G(v)>2np\right) \leq 
	\exp\left( - \frac14 n^{1/3-\eps}\right).
$
Taking a union bound over all possible vertices $v$,
the claim follows.
\end{proof}

\begin{lemma}\label{lem:tec3_connector}
Let $\eps >0$, $p=n^{-2/3+\eps}$ and let $G\sim G_{n,p}$.
Let $A\subset V(G)$ be of size $n^{2/3}$,
then with probability at least 
$1-\exp(-n^{\eps/2})$ 
every vertex $v\in V(G)\setminus A$ satisfies
$d_G(v,A)>n^{\eps/2}$.
\end{lemma}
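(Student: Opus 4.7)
The plan is to apply a standard Chernoff lower tail bound to each vertex $v \in V(G)\setminus A$ individually and then take a union bound. The key observation is that, by the definition of $G_{n,p}$, the degree $d_G(v,A)$ into the fixed set $A$ is distributed as $Bin(|A|,p)$, with expectation
\[
\mu := |A|\cdot p = n^{2/3}\cdot n^{-2/3+\eps} = n^{\eps}.
\]
Thus the target threshold $n^{\eps/2}$ is substantially smaller than the mean $n^\eps$, so one is asking for a rather loose lower tail estimate, which Chernoff's inequality handles comfortably.

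Concretely, I would invoke the first bullet of Lemma~\ref{lem:Chernoff1} with $\delta = 1-n^{-\eps/2}$, so that $(1-\delta)\mu = n^{\eps/2}$. For $n$ large enough we have $\delta^2 \geq 1/2$, and hence
\[
\Prob\bigl(d_G(v,A) \leq n^{\eps/2}\bigr)
\;<\; \exp\!\left(-\tfrac{\delta^2\mu}{2}\right)
\;\leq\; \exp\!\left(-\tfrac{n^{\eps}}{4}\right).
\]
Taking a union bound over the at most $n$ vertices $v\in V(G)\setminus A$, the probability that some vertex fails the required inequality is at most
\[
n\cdot \exp(-n^{\eps}/4) = \exp\bigl(\ln n - n^{\eps}/4\bigr),
\]
which is smaller than $\exp(-n^{\eps/2})$ for $n$ sufficiently large, since $n^\eps$ dominates both $n^{\eps/2}$ and $\ln n$. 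This yields the claimed bound.

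There is no real obstacle here: the statement is essentially a one-line concentration argument about $Bin(n^{2/3}, n^{-2/3+\eps})$, whose expectation grows like a positive power of $n$. The only mild care needed is to pick the Chernoff parameter so that the resulting exponent, after multiplying by the $n$ terms from the union bound, still dominates $n^{\eps/2}$; choosing $\delta$ constant (say $\delta = 1/2$) would actually suffice for concentration around $\mu = n^\eps$, but pushing the threshold all the way down to $n^{\eps/2}$ costs nothing and gives an even more generous tail bound. The argument is independent of the game structure and relies solely on independence of edges in $G_{n,p}$ and the fixed (deterministic) choice of the set $A$.
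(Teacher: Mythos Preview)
Your proposal is correct and follows essentially the same approach as the paper: compute $\Exp(d_G(v,A))=n^{\eps}$, apply the lower-tail Chernoff bound from Lemma~\ref{lem:Chernoff1}, and then take a union bound over all $v$. The paper in fact uses the simpler choice $\delta=\tfrac12$ (which you also note would suffice), obtaining $\Prob(d_G(v,A)\le \tfrac12 n^{\eps})<\exp(-n^{\eps}/8)$ before the union bound.
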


\begin{proof}
Let $A$ be a fixed set of size $n^{2/3}$. 
Generating $G\sim G_{n,p}$ yields that
for every vertex $v\in V(G)\setminus A$, we have
$d_G(v,A) \sim Bin(|A|,p)$ and thus 
$\Exp(d_G(v,A)) = n^{\eps}.$
By Lemma~\ref{lem:Chernoff1} 
we deduce that 
$
\Prob\left(
d_G(v,A)\leq \frac{1}{2} n^{\eps}
\right) <  \exp\left( - \frac{1}{8} n^{\eps} \right)
$. Taking a union bound over all possible $v$, the claim follows.
\end{proof}

%%%%%%%%%%%%%%%%%%%%%%%%%%%%%%%%%%%%%%%%%%%%%%%%%%%%%%%%%%%%%%%
%%%%%%%%%%%%%%%%%%%%%%%%%%%%%%%%%%%%%%%%%%%%%%%%%%%%%%%%%%%%%%%
%
% Breaker strategy
%
%%%%%%%%%%%%%%%%%%%%%%%%%%%%%%%%%%%%%%%%%%%%%%%%%%%%%%%%%%%%%%%
%%%%%%%%%%%%%%%%%%%%%%%%%%%%%%%%%%%%%%%%%%%%%%%%%%%%%%%%%%%%%%%

\section{Breaker's strategy} \label{sec:Breaker's strategy}
\subsection{Defining bad vertices}

For $p=n^{-2/3-\eps}$ we aim to give a Breaker's strategy
that a.a.s. isolates a given vertex $x$ from Connector's graph
when a $(2:2)$ game is played on $G\sim G_{n,p}$.
In order to do so, we first define iteratively a set $B^x$
of vertices that are {\em bad} with respect to the aim of isolating $x$. If $x$ is carefully chosen (which we will manage later) then Breaker has a strategy to make sure that Maker in her move either does not even reach $B^x$, or in case she reaches $B^x$ then Breaker can immediately destroy all potential threads. More details will be given later. Algorithm~\ref{alg:bad} decribes how $B^x$ is constructed.

  \begin{algorithm}[ht]\label{alg:bad}
    \caption{Bad vertex set $B^x$ for given vertex $x$}    
\SetKwInOut{Input}{Input}
\SetKwInOut{Output}{Output}
\Input{~ graph $G$ and vertex $x\in V(G)$}
\Output{~ number of iterations $r_x$, bad vertex set 
$B^x=\bigcup_{k\leq r_x} B_k^x$}
  ~ $B^x_1:=N_G(x)$\;
  ~ $B^x:=B^x_1$\;
    \For{$i\geq 2$}{
    \quad\mbox{} $B^x_i:=\left\{v\notin B^x\cup \{x\}:~ 			d_G(v,B^x)\geq 2\right\}$ \;
	\quad\mbox{} $B^x\leftarrow B^x\cup B^x_i$\\
         \lIf{$B^x_{i}=\varnothing$\\ 
         \quad\mbox{}}{
        halt with output $B_1^x,\ldots,B_{i-1}^x,B^x$ and 		
        $r_x=i-1$}}
  \end{algorithm}

The following lemma will be crucial for Breaker's strategy.

\begin{lemma}\label{lem:bad}
Let $n$ be a large enough integer and
let $\eps \geq 7 \ln\ln n / \ln n$.
For $p=n^{-2/3-\eps}$ generate $G\sim G_{n,p}$.
Then a.a.s. $G$ satisfies the following property:
For every set $M\subset V(G)$ of size~3, there exists a vertex $x$
such that Algorithm~\ref{alg:bad} produces 
a set $B^x$ of vertices and a sequence
$(B_1^x,\ldots,B_{r_x}^x)$ of disjoint subsets of $B^x$
such that the following holds:
{\begin{enumerate}[label=\itmarab{B}]
\item\label{Bprop:neighbours1} $B_1^x=N_G(x)$
and $e_G(B_1^x)=0$,
\item\label{Bprop:neighbours2} for every $2\leq i\leq r_x$ and every vertex in $v\in B_i^x$ we have 
$d_G\left(v,\bigcup_{k\leq i} B_k^x\right)=2$,
\item\label{Bprop:nonbad} for every vertex $v\in V\setminus (B^x\cup \{x\})$ it holds that $d_G(v,B^x)\leq 1$,
\item\label{Bprop:disjointM} $B^x\cap (M\cup N_G(M))  = \varnothing$.
\end{enumerate}}
\end{lemma}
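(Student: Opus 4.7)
The plan is to locate, in a typical $G\sim G_{n,p}$, a vertex $x$ whose ``$2$-bootstrap closure'' $B^x$ produced by Algorithm~\ref{alg:bad} is structurally tight (satisfying B1--B3) and does not meet the small forbidden set $M\cup N_G(M)$ (satisfying B4). As preparation I would record a.a.s.\ structural facts about $G$: by Lemma~\ref{lem:degree_bound}, $\Delta(G)\leq 2np=2n^{1/3-\eps}$; by first-moment calculations, $G$ contains $o(n)$ copies of triangles and of certain small ``dense'' subgraphs that will play the role of witnesses to a B2-violation below.

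Call $x$ \emph{bad} if Algorithm~\ref{alg:bad} produces a $B^x$ violating B1 or B2; note that B3 holds automatically from the halting condition, since once $B_i^x=\varnothing$ no outside vertex has $\geq 2$ neighbors in $B^x$. A B1-violation forces $x$ into a triangle, so such $x$ are $o(n)$. A B2-violation at some iteration $i$ means that either some $v\in B_i^x$ has $d_G(v,B_{\leq i-1}^x)\geq 3$, or some edge lies inside $B_i^x$; in either case $G$ contains a minimal rooted ``witness'' $H\ni x$ consisting of the algorithmic ancestor trees of the offending vertex (or vertices) together with one extra edge. A direct count (each non-$B_1^x$ non-root vertex contributes $2$ ancestor-edges, $x$ contributes $d_H(x)$ edges to $B_1^x\cap V(H)$, and there is exactly one extra edge) shows that $|E(H)|=2|V(H)|-d_H(x)-1$, so the first-moment bound of order $n^{|V(H)|}p^{|E(H)|}$ combined with $p=n^{-2/3-\eps}$ gives expected counts of order $n^{1-\Theta(\eps)}=o(n)$ for the ``shallow'' witnesses (e.g.\ $K_{2,3}$, arising when $v\in B_2^x$) and $o(1)$ for all deeper witnesses, where $d_H(x)<|V(H)|/2-1$. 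Summing over witness types, the total number of bad vertices is $o(n)$ a.a.s.

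Finally, to secure B4 for the prescribed 3-set $M$, observe that $|M\cup N_G(M)|\leq 3+6np=O(n^{1/3-\eps})$ by the degree bound. Via the double-counting inequality $\#\{x:B^x\cap(M\cup N_G(M))\neq\varnothing\}\leq\sum_{y\in M\cup N_G(M)}\#\{x:y\in B^x\}$, it suffices to show that $\sum_x|B^x|\leq n^{4/3+o(1)}$; then the above sum is $o(n)$. Together with the bad-vertex count, at most $o(n)$ vertices are excluded in total, so a valid $x$ exists and satisfies B1--B4 by construction. The main obstacle is precisely this uniform control of $|B^x|$: since $\Exp(|B_i^x|)\approx n^{1/3-\eps}$ at every level, the underlying $2$-bootstrap process sits near criticality, so one must couple its level-wise growth with a subcritical branching process and use Lemma~\ref{lem:Chernoff2} for sharp level-wise tails in order to guarantee that both $r_x$ and $|B^x|$ remain polynomially bounded for almost every $x$. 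Carrying out this concentration argument in detail is presumably the content of Section~\ref{sec:algorithm}.
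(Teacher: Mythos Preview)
Your approach is genuinely different from the paper's, and parts of it are reasonable, but there is a real gap in your treatment of (B4).

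\textbf{The gap.} Your double-counting step reads: since $\#\{x:B^x\cap(M\cup N_G(M))\neq\varnothing\}\leq\sum_{y\in M\cup N_G(M)}\#\{x:y\in B^x\}$, it suffices that $\sum_x|B^x|\leq n^{4/3+o(1)}$. But this does not yield the desired conclusion. The identity $\sum_x|B^x|=\sum_{y\in V}\#\{x:y\in B^x\}$ only gives an \emph{average} value of $n^{1/3+o(1)}$ for $\#\{x:y\in B^x\}$; it says nothing about the restricted sum over $y\in M\cup N_G(M)$. Crucially, the set $M$ is chosen \emph{after} the graph is revealed, so an adversary may pick the three vertices whose closed neighbourhoods concentrate the largest values of $\#\{x:y\in B^x\}$. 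To salvage this route you would need a \emph{uniform} bound of the form $\max_y \#\{x:y\in B^x\}\leq n^{2/3-\Omega(1)}$, which is a very different (and harder) statement than the global bound you assert; it amounts to running the bootstrap process backwards and controlling it for every $y$ simultaneously. Your sketch does not address this.

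A secondary issue: in the (B2) witness count your claim that ``deeper witnesses'' satisfy $d_H(x)<|V(H)|/2-1$ is false in general (e.g.\ a violating vertex in $B_3^x$ whose three ancestors in $B_2^x$ have six distinct parents in $B_1^x$ gives $d_H(x)=6$, $|V(H)|=11$). The first-moment bound still happens to give $o(n)$ there, but you also need to control the sum over the growing family of witness isomorphism types, which you have not done.

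\textbf{What the paper does instead.} The paper avoids analysing all $x$ and avoids the adversarial-$M$ issue entirely. It proves a technical lemma that picks $t=7$ vertices $x_1,\dots,x_7$ \emph{randomly}, exposes edges step by step as Algorithm~\ref{alg:bad} runs for each $x_j$ in turn, and shows that a.a.s.\ the resulting sets $B^{x_1},\dots,B^{x_7}$ are pairwise disjoint, have no edges between them, and no vertex outside their union has three or more neighbours in it. Properties (B1)--(B3) then follow for \emph{every} $x_j$ from these finer structural facts. For (B4) one simply applies pigeonhole: each $m\in M$ lies in at most one $B^{x_j}$ (by disjointness) or is adjacent to at most two of them (by the degree-3 condition), so the three vertices of $M$ together spoil at most six of the seven candidates, and one survives. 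This completely sidesteps the need for uniform control over all $x$ or all $y$.

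\textbf{Summary.} Your outline for (B1)--(B3) is in the right spirit but incomplete; your argument for (B4) has a genuine logical gap that is not repaired by the concentration you defer to Section~\ref{sec:algorithm}. The paper's seven-vertex pigeonhole argument is the key idea you are missing.
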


We postpone the proof of the above lemma to Section~\ref{sec:algorithm} and recommend to read Breaker's strategy
first.

\subsection{The strategy}
In the following we prove Theorem~\ref{main_breaker}. Let Connector and Breaker play a $(2:2)$ game on $G\sim G_{n,p}$.
We will show that, under the condition that the property described in Lemma~\ref{lem:bad} holds,
Breaker has a strategy that isolates a vertex from Connector's graph.
Let $V_C^r$ denote the set of vertices that are covered by Connector's edges at the end of round $r$. Immediately after Connector's first move, we have 
$|V_C^1|=3$ and thus, by the property from Lemma~\ref{lem:bad} (applied with $M=V_C^1$), we find a vertex $x$ such that
Algorithm~\ref{alg:bad} produces 
a set $B^x$ of vertices and a sequence
$(B_1^x,\ldots,B_{r_x}^x)$ of disjoint subsets of $B^x$ such that the Properties~\ref{Bprop:neighbours1}--\ref{Bprop:disjointM}
hold with $M=V_C^1$. 
Notice that, at this point $x\notin V_C^1\cup N_G(V_C^1)$
holds, according to \ref{Bprop:disjointM} and since $N_G(x)\subset B^x$.

In order to simplify notation,
let $B_0^x:=\{x\}$ and set
$B_{<i}^x := \bigcup_{\ell=0}^{i-1} B_{\ell}^x$
as well as $B_{\leq i}^x := \bigcup_{\ell=0}^{i} B_{\ell}^x$.
Breaker's strategy is to make 
sure that for each round $r$, immediately 
after his move the following property holds for every free edge $vw$:

\medskip\medskip

\begin{center}
\begin{minipage}{0.8\textwidth}
{\begin{enumerate}[label=\itmarab{Q}]
\item\label{B_maintain}
If there exists $0\leq i\leq r_x$ 
such that $v\in (N_G(V_C^r)\setminus V_C^r) \cap B_i^x$
and $w\in V_C^r$, then
$w\in B_{<i}^x$.
\end{enumerate}}
\end{minipage}
\end{center}

\medskip\medskip

Let us observe first that Breaker keeps $x$ isolated in Connector's graph, if he is indeed able to maintain \ref{B_maintain} for every free edge after each of his moves.
Assume this is not the case, i.e. there is some round $r$ in which Connector reaches vertex $x$.
Then immediately after Breaker's $(r-1)^\text{st}$
move, we have that \ref{B_maintain} holds 
for every free edge and still
$x\notin V_C^{r-1}$.
From this it follows that immediately before Connector's $r^{\text{th}}$ move there cannot be a free edge $xw$ with $w\in V_C^{r-1}$. 
Indeed, otherwise we would need 
$x\in (N_G(V_C^{r-1})\setminus V_C^{r-1})\cap B_0^x$ 
and by \ref{B_maintain} we would get 
$w\in B_{<0}^x = \varnothing$, a contradiction.
Thus, in order to reach $x$ during round $r$, Connector would need do claim a path $(w,v,x)$
of length 2, starting with some vertex $w\in V_C^{r-1}$ and ending in $x$. It then follows that 
$v\in (N_G(V_C^{r-1})\setminus V_C^{r-1}) \cap B_1^x$.
However, using \ref{B_maintain} 
for the free edge $wv$ at the end of round~$r-1$, this 
would give
$w\in B_{<1}^x = B_0^x$ and thus $x=w$,
a contradiction. 

Hence, we know that Connector cannot reach $x$ as long as Breaker restores \ref{B_maintain}
for every free edge.
It thus remains to verify that Breaker can indeed do so. We proceed by induction. \\

For round 1, observe that immediately after Connector's first move, there is no edge between $V_C^1$ and $B^x\cup \{x\}$, according to Property~\ref{Bprop:disjointM} (with $M=V_C^1$).
Thus, Property~\ref{B_maintain} holds at the end of round 1 for every free edge, independent of what Breaker's first move is,
as there does not exist any edge $vw$ as described in that property.\\
Let us assume then, that \ref{B_maintain} is satisfied immediately after Breaker's 
$(r-1)^{\text{st}}$
move for every free edge, and let us explain how Breaker restores
\ref{B_maintain} in the next round. 
Without loss of generality we may assume that in round $r$ Connector reaches exactly two new vertices, say $w_1$ and $w_2$, i.e. $V_C^{r}=V_{C}^{r-1}\cup \{w_1,w_2\}$.

If after Connector's $r^{\text{th}}$ move,
there exist at most two free edges that fail to satisfy
Property~\ref{B_maintain} (with $V_C=V_C^r$),
then Breaker claims these edges and by this easily restores
that \ref{B_maintain} holds for every free edge
at the end of round $r$.
So, assume for a contradiction that immediately after Connector's $r^{\text{th}}$ move there are 
at least three free edges that do not satisfy \ref{B_maintain}. All of these edges need to be incident to $w_1$ or $w_2$, as before Connector's move the Property~\ref{B_maintain}
was true for every free edge (where $V_C=V_C^{r-1}$). Without loss of generality let $w_2$
be incident to at least two of these edges,
say $w_2v_1$ and $w_2v_2$. As these edges fail to hold \ref{B_maintain} after Connector's $r^{\text{th}}$ move, we have
$v_1\in (N_G(V_C^r)\setminus V_C^r) \cap B_{i_1}^x$
and 
$v_2\in (N_G(V_C^r)\setminus V_C^r) \cap B_{i_2}^x$
for some $0\leq i_1,i_2\leq r_{x}$,
while $w_2\in V_C^r$ and 
$w_2\notin B_{<i}^x$ with $i:=\max\{i_1,i_2\}$.
Now, since $w_2$ has two neighbours in $B^x\cup \{x\}$,
Algorithm~\ref{alg:bad} at some point must have added $w_2$ to $B^x$. Thus, we conclude that $w_2\in B_k^x$
for some $k \geq \max\{i_1,i_2\}$.
 
Consider first the case that in round $r$
Connector reaches $w_2$ by claiming a free edge
$yw_2$ with $y\in V_C^{r-1}$.
Then $y\notin \{v_1,v_2\}$. Moreover,
$w_2\in N_G(V_C^{r-1})\setminus V_C^{r-1}$
and, since \ref{B_maintain}
was true for $yw_2$ at the end of round~$r-1$
(with $V_C=V_C^{r-1}$),
we conclude $y\in B_{<k}^x$. But this means that
$w_2\in B_k^{x}$ has three neighbours in
$B_{\leq k}^x$ (namely $v_1,v_2$ and $y$), a contradiction to~\ref{Bprop:neighbours2}.

Consider then the case that in round $r$
Connector does not reach $w_2$ as in the first case.
That is, in round $r$ Connector 
claims a path $(y,w_1,w_2)$ with $y\in V_C^{r-1}$
and
$w_1\in N_G(V_C^{r-1})\setminus V_C^{r-1}$. 
%% and $w_2\notin (V_C^{r-1}\cup N(V_C^{r-1}))$
We know that $w_2\in B_k$ has exactly two neighbours
in $B_{\leq k}^x$ according to Property~\ref{Bprop:neighbours2}, and these neighbours need to be
$v_1$ and $v_2$. It follows that the third edge, which does not satisfy~\ref{B_maintain} immediately before Breaker's $r^{\text{th}}$ move, cannot be incident to $w_2$ and thus needs to be of the form $v_3w_1$ with $v_3\in (N_G(V_C^r)\setminus V_C^r)\cap B_{i_3}^x$ for some $0\leq i_3\leq r_x$.
Then $v_3,w_2\in B^x$ are two neighbours of $w_1$
and hence Algorithm~\ref{alg:bad} must have added $w_1$ to $B^x$ at some point, say $w_1\in B_t^x$. 
Since again $w_2\in B_k^x$ has exactly two neighbours in $B_{\leq k}^x$ and these are
$v_1$ and $v_2$, we must have $w_1\notin B_{\leq k}^x$, i.e. $t>k$ .
But now, by induction, Property~\ref{B_maintain}
was true for the free edge $yw_1$ at the end of round $r-1$, and thus $y\in B_{< t}^x$. 
Moreover, as we assumed $v_3w_1$ to be an edge not satisfying \ref{B_maintain} after Connector's
$r^{\text{th}}$ move, we have $w_1\notin B_{<i_3}^x$ and thus $i_3\leq t$.
Hence, we obtain that the three neighbours 
$w_2,v_3,y$ 
of $w_1\in B_t^x$ belong to $B_{\leq t}^x$, 
as we have $w_2\in B_k^x\subset B_{\leq t}^x$ and
$v_3\in B_{i_3}^x\subset B_{\leq t}^x $
and $y\in B_{<t}^x$. This again leads to a contradiction with \ref{Bprop:neighbours2}.
\hfill $\Box$

%%%%%%%%%%%%%%%%%%%%%%%%%%%%%%%%%%%%%%%%%%%%%%%%%%%%%%%%%%%%%%%
%%%%%%%%%%%%%%%%%%%%%%%%%%%%%%%%%%%%%%%%%%%%%%%%%%%%%%%%%%%%%%%
%
% Connector strategy
%
%%%%%%%%%%%%%%%%%%%%%%%%%%%%%%%%%%%%%%%%%%%%%%%%%%%%%%%%%%%%%%%
%%%%%%%%%%%%%%%%%%%%%%%%%%%%%%%%%%%%%%%%%%%%%%%%%%%%%%%%%%%%%%%

\section{Connector's strategy} \label{sec:Connector's strategy}

\subsection{Defining good structures}
For $p=n^{-2/3+\eps}$ we aim to give a Connector's strategy with which Connector a.a.s. can reach every vertex of $G\sim G_{n,p}$.
In order to do so, we will first describe a few useful structures, that are typically contained in $G$ even after deleting a few edges 
and which will help
Connector later on to reach any fixed vertex within
a small number of rounds.

Recall that $E_B$ denotes the set of Breaker's edges at any moment during a Connector-Breaker game,
while $V_C$ denotes the set of vertices incident to Connector's edges. Moreover,
denote with $\Tk$ the full binary tree
with $k$ levels.

\begin{dfn}\label{def:good}
Let $k\in\mathbb{N}$. Assume a $(2:2)$ Connector-Breaker game 
on some graph $H$ is in progress.
Let $x\in V(H)\setminus V_C$. Then we call a subgraph $T \cong \Tk$
of $H$ \textit{good} with respect to 
$(x,H)$
if the following conditions hold:
\begin{enumerate}
\item[(1)] $x\notin V(T)$,
\item[(2)] if $\overrightarrow{T}$ is the orientation
where the edges are oriented from the root to the leaves, then for every arc $\overrightarrow{uw}\in E(\overrightarrow{T})$ we have either $uw\notin E_B$ or ($uw\in E_B$ and $w\in V_C$).
\item[(3)] for every leaf $v$ of $T$ we have $vx\in E(H)\setminus E_B$. 
\end{enumerate}
\end{dfn}

\begin{lemma}[Base strategy]\label{lem:connector:basic}
Assume a $(2:2)$ Connector-Breaker game 
on some graph $H$ is in progress with
Connector being the next player to make a move. 
Let $x\in V(H)\setminus V_C$
and let $k\geq 2$ be any integer.
Moreover, assume that $H$ contains a binary tree $T \cong \Tk$
which is good with respect to $(x,H)$
and such that its root $r$ belongs to 
$V_C$ already.
Then Connector has a strategy $\mathcal{S}_x$ 
to
reach $x$ (i.e. to add $x$ to $V_C$) 
within at most $k$ rounds.
\end{lemma}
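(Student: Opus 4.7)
The plan is to prove the lemma by induction on $k$, showing that from any good $\mathcal{T}_k$-structure with root in $V_C$, Connector can reach $x$ in at most $k$ rounds. For the base case $k=2$, the tree consists of the root $r\in V_C$ and two leaves $\ell_1,\ell_2$, with $\ell_1 x,\ell_2 x\notin E_B$ by condition~(3). For each $i\in\{1,2\}$, condition~(2) implies that either $r\ell_i\notin E_B$ (so Connector can claim $r\ell_i$ and $\ell_i x$ in a single round, reaching $x$) or $r\ell_i\in E_B$ (so $\ell_i\in V_C$ already, and Connector only needs to claim $\ell_i x$). Either way, one round suffices, which is at most $k=2$.

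For the inductive step with $k\geq 3$, let $c_1,c_2$ denote the children of $r$ in $T$, and let $T^{(1)},T^{(2)}\cong\mathcal{T}_{k-1}$ be the subtrees rooted at them. In round~$1$ Connector ensures that both $c_1,c_2\in V_C$ after her move: for each $i$, she claims $rc_i$ if it is free, or else by condition~(2) $c_i\in V_C$ already and she can use her remaining edge budget to preempt future Breaker attacks. After Breaker's $2$-edge response, Connector selects one of the subtrees, say $T^{(i)}$, which is still good with respect to $(x,H)$---or, if Breaker's moves have damaged both subtrees, descends one further level into an undamaged sub-subtree of depth at least $k-2$---and applies the inductive hypothesis to reach $x$ in at most $k-1$ more rounds, for a total of $k$.

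The main obstacle is that Breaker's two edges in round~$1$ may simultaneously damage the goodness of $T^{(1)}$ and $T^{(2)}$, for instance by claiming one tree edge in each subtree whose child is not yet in $V_C$, thereby violating condition~(2). To overcome this I would exploit the binary tree's exponential redundancy: at level $j$ there are $2^{j-1}$ disjoint subtrees while Breaker's total budget is only $2k$ edges, so even if the top subtrees are damaged a deeper sub-subtree remains intact. Making this precise will likely require either strengthening the inductive hypothesis to tolerate a bounded number of damaged tree edges, or explicitly pruning the tree to a good sub-subtree of depth $k-2$ or so before invoking induction. The fact that the bound is $k$ rather than something close to $\log k$ gives Connector the slack needed to absorb Breaker's interference, and carefully tracking which tree edges Breaker has claimed will be the crux of the proof.
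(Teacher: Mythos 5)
Your base case matches the paper's. The inductive step, however, has a genuine gap that you yourself identify but do not close: after Connector secures the two children $c_1,c_2$ in round~1, Breaker's two edges can indeed ruin the goodness of both depth-$(k-1)$ subtrees, and neither of your proposed fixes is worked out. Moreover, your fallback of ``descending to a single undamaged sub-subtree of depth $k-2$'' cannot work as stated: a single intact $\mathcal{T}_{k-2}$ hanging below $c_i$ by one connecting edge is not a full binary tree of depth $k-1$, so the induction hypothesis does not apply to it, and the binary branching at every level is exactly what the strategy needs to survive Breaker's interference. Your appeal to Breaker's ``total budget of $2k$ edges'' versus $2^{j-1}$ subtrees also points toward a global counting argument rather than the round-by-round induction you set up.

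The paper's resolution is a two-level descent. Let $r_1,r_2$ be the children of $r$ and $r_{i,j}$ ($i,j\in\{1,2\}$) the grandchildren, each rooting a subtree $T_{i,j}\cong\mathcal{T}_{k-2}$. Define the four pairwise disjoint edge sets $E_{i,j}:=\{r_ir_{i,j}\}\cup E(T_{i,j})\cup\{xw:\,w\text{ a leaf of }T_{i,j}\}$. In round~1 Connector puts $r_1$ and $r_2$ into $V_C$ (possible by condition~(2)). Breaker's two edges can touch at most two of the four sets $E_{i,j}$, so two survive untouched; the union of two surviving sets, with $r_1$ and $r_2$ identified if necessary (both are in $V_C$), is a good copy of $\mathcal{T}_{k-1}$ rooted in $V_C$, and induction finishes in $k-1$ further rounds. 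The count ``four disjoint candidate sets versus two Breaker edges per round'' is the missing idea; if you incorporate it, the rest of your outline goes through. As a smaller point, your remark that Connector can ``use her remaining edge budget to preempt future Breaker attacks'' should be dropped or justified, since Connector's moves must keep her graph connected and cannot be spent arbitrarily.
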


\begin{proof}
We prove the statement by induction on $k$.
For $k=2$, by assumption we are given 
a tree $T\cong \mathcal{T}_2$
the leaves of which are adjacent with $x$ in $H \setminus B$, according to Definition~\ref{def:good}. If one of the leaves belongs to $V_C$, then Connector can take the edge between that leaf and $x$. 
Otherwise, according to (2) we obtain
$E(T)\cap E_B=\varnothing$. Then, since the root $r$ of $T$ belongs to $V_C$ by assumption, Connector can claim one edge
between $r$ and a leaf of $T$, and 
for her second edge she can claim the edge between that leaf and $x$. Thus, she reaches $x$ within $1$ round.

Let $k>2$ then. Let $T\cong \Tk$ be a tree as described in the assumption of the lemma. Denote the root of $T$
with $r$, let $r_1$ and $r_2$ be the neighbours of $r$ in $T$,
and let $r_{1,1}$, $r_{1,2}$ and $r_{2,1}$, $r_{2,2}$ be the respective children of $r_1$ and $r_2$ in $T$. Each of the vertices $r_{i,j}$ is the root of a subtree $T_{i,j}\cong \mathcal{T}_{k-2}$
the leaves of which are adjacent with $x$ in $H\setminus B$. Let
$$
E_{i,j}:=\{r_ir_{i,j}\}\cup E(T_{i,j})\cup \left\{
xw:~ w~\text{is a leaf of }T_{i,j}\right\}
$$
for every $1\leq i,j\leq 2$, 
and observe that the four sets $E_{i,j}$ are pairwise disjoint. 
For the first round, Connector makes sure that
$r_1$ and $r_2$ are added to $V_C$
if they do not belong to $V_C$ already. This is possible since
for every $i\in [2]$ we have that $r_i\in V_C$ already before that round or $r_ir\notin E_B$ according to (2) in  Definition~\ref{def:good}.
After Breaker's following move we know that
there are at least two sets $E_{i,j}$
with $1\leq i,j,\leq 2$ that Breaker did not touch in his move. Taking the union of two such sets,
say $E_{i_1,j_1}$ and $E_{i_2,j_2}$,
while identifying $r_1$ with $r_2$ if
$i_1\neq i_2$, we obtain
a binary tree $T'\cong \mathcal{T}_{k-1}$
which is good with respect to $(x,H')$
where $E(H')=E_{i_1,j_1}\cup E_{i_2,j_2}$. 
Thus, by induction Connector needs at most $k-1$ further rounds for reaching $x$.
\end{proof}

Connector's main strategy will be split into different stages. Depending on the number of rounds played so far, she will use similar but different structures that help to increase $V_C$ until every vertex is reached. These structures are given by the following lemmas while the proofs of the lemmas will be given in Section~\ref{sec:structures}.

\begin{lemma}[Good structures for Stage I]\label{lem:tec1_connector}
For every constant $\delta >0$ 
there exists an integer $k_1\in\mathbb{N}$ such that the following holds.
Let $G\sim G_{n,p}$ with $p=n^{-2/3+\delta}$, 
then with probability at least 
$1-n^{-1}$ the following is true for every $r,x\in V(G)$:\\
Let $B$ be any subgraph of $G$ with $e(B)\leq n^{1/3}\ln n$, then the graph $G\setminus B$ contains a copy $T$ of $\mathcal{T}_{k_1}$ such that
$r$ is the root of $T$, $x\notin V(T)$ and
every leaf of $T$ is adjacent to $x$ in $G\setminus B$. 
\end{lemma}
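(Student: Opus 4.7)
The plan is a first-moment argument combined with Janson's inequality for concentration. For a fixed pair $r, x \in V(G)$ with $r \neq x$, let $\mathcal{N}(r, x)$ denote the number of copies of $\mathcal{T}_{k_1}$ in $G$ rooted at $r$, avoiding $x$, and with every leaf adjacent to $x$ in $G$. Such a good structure has $V := 2^{k_1} - 1$ tree vertices (including $r$) and $E := (2^{k_1}-2) + 2^{k_1-1} = 3 \cdot 2^{k_1-1} - 2$ edges (tree edges plus leaf-to-$x$ edges), so up to constants absorbing the automorphisms of $\mathcal{T}_{k_1}$,
\[
\Exp[\mathcal{N}(r,x)] \;=\; \Theta\!\left(n^{2^{k_1}-2}\, p^{3\cdot 2^{k_1-1}-2}\right) \;=\; \Theta\!\left(n^{-2/3 + \delta(3 \cdot 2^{k_1-1} - 2)}\right).
\]
For any fixed $\delta>0$ I would choose $k_1 = k_1(\delta)$ as the smallest integer with $\delta(3 \cdot 2^{k_1-1} - 2) > 1 + 2\delta$, so that $\Exp[\mathcal{N}(r,x)] \geq n^{1/3 + 2\delta}$ for every pair $(r,x)$.

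The next step is to apply Janson's inequality (alternatively, Kim--Vu polynomial concentration) to obtain two concentration statements. First, uniformly over $(r,x)$, the lower tail $\mathcal{N}(r,x) \geq \tfrac{1}{2}\Exp[\mathcal{N}(r,x)]$ should hold with failure probability at most $n^{-3}$, surviving a union bound over all $n^2$ pairs. Second, for every edge $uv$ and every $(r,x)$, writing $\mathcal{N}_{uv}(r,x)$ for the number of good trees through $uv$, one should have $\mathcal{N}_{uv}(r,x) \leq C\bigl(1 + \Exp[\mathcal{N}_{uv}(r,x)]\bigr)$ for an absolute constant $C$, with failure probability $\leq n^{-5}$ per tuple. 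A short subgraph-count calculation gives
\[
\Exp[\mathcal{N}_{uv}(r,x)] \;=\; \begin{cases} O\!\bigl(\Exp[\mathcal{N}(r,x)]/(np)\bigr), & \text{if } \{u,v\}\cap\{r,x\}\neq\varnothing, \\ O\!\bigl(\Exp[\mathcal{N}(r,x)]/(n^2 p)\bigr), & \text{otherwise,} \end{cases}
\]
so the concentration forces $\max_e \mathcal{N}_e(r,x) = O(n^\delta)$ uniformly. A union bound over the $O(n^4)$ tuples $(u,v,r,x)$ then makes both concentration statements simultaneous with total failure probability $\leq n^{-1}$.

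Conditioning on these events, I fix any $r, x$ and any $B \subseteq E(G)$ with $e(B) \leq n^{1/3}\ln n$. Every good tree destroyed by $B$ must contain at least one edge of $B$, hence the number of destroyed good trees is at most
\[
\sum_{e \in B}\mathcal{N}_e(r,x) \;\leq\; e(B) \cdot \max_e \mathcal{N}_e(r,x) \;=\; O\!\bigl(n^{1/3+\delta}\ln n\bigr) \;=\; o\!\bigl(\Exp[\mathcal{N}(r,x)]\bigr),
\]
using $\Exp[\mathcal{N}(r,x)] \geq n^{1/3+2\delta}$. Combined with the lower bound $\mathcal{N}(r,x) \geq \tfrac{1}{2}\Exp[\mathcal{N}(r,x)]$, this leaves at least one good tree in $G\setminus B$, as required.

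The main obstacle will be verifying the correlation-sum bound $\Delta = o(\Exp[\mathcal{N}]^2)$ needed for Janson's inequality. One has to enumerate the intersection patterns of two distinct good copies --- indexed by which tree vertices and leaf-to-$x$ edges they share --- and show that each contributes strictly lower order to $\Delta$. The key combinatorial input is that the good structure has density $(3 \cdot 2^{k_1-1}-2)/2^{k_1} < 3/2$, with no proper sub-configuration attaining this bound, so the factor $n$ saved per shared vertex always outweighs the factor $p^{-1} = n^{2/3-\delta}$ spent per shared edge (taking $k_1$ slightly larger if one wants more slack). The enumeration is tedious but routine for subgraph counts in $G_{n,p}$, and the same analysis yields the edge-wise concentration of $\mathcal{N}_{uv}$ required for the $O(n^4)$ union bound.
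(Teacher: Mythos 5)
Your route is genuinely different from the paper's: you count \emph{all} good copies via a first-moment computation and then try to control, simultaneously for every edge, how many copies pass through it, so that an adversarial $B$ with $e(B)\leq n^{1/3}\ln n$ cannot destroy them all. The skeleton and the final arithmetic are sound: with $\Exp[\mathcal{N}_{uv}(r,x)]=O(\Exp[\mathcal{N}(r,x)]/(np))$ for edges at $r$ or $x$ and $O(\Exp[\mathcal{N}(r,x)]/(n^2p))$ otherwise, one gets $\sum_{e\in B}\Exp[\mathcal{N}_e]=o(\Exp[\mathcal{N}(r,x)])$ since $e(B)/(np)=\ln n\cdot n^{-\delta}=o(1)$. (Your intermediate claim ``$\max_e\mathcal{N}_e(r,x)=O(n^{\delta})$ uniformly'' is false for edges incident to $r$ or $x$, where $\Exp[\mathcal{N}_e]$ can be as large as $n^{1+O(\delta)}$ because $\Exp[\mathcal{N}(r,x)]$ can be as large as $n^{4/3+O(\delta)}$ for your minimal $k_1$; the displayed bound $O(n^{1/3+\delta}\ln n)$ on the destroyed count is therefore wrong as written, but the conclusion $o(\Exp[\mathcal{N}(r,x)])$ survives after splitting the sum. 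Also, the relevant density is the \emph{rooted} one with $r$ and $x$ fixed, namely $e/(v-2)=(3\cdot 2^{k_1-1}-2)/(2^{k_1}-2)>3/2$, not $<3/2$; this is exactly why $k_1$ must be large.)

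The genuine gap is the edge-wise upper-tail statement $\mathcal{N}_{uv}(r,x)\leq C(1+\Exp[\mathcal{N}_{uv}(r,x)])$ with failure probability $n^{-5}$ per tuple. Janson's inequality only gives lower tails, so it handles your first concentration claim but not this one. The count $\mathcal{N}_{uv}$ for an edge $uv$ deep in the tree factors through quantities like ``the number of good depth-$i$ subtrees rooted at $u$ with all leaves adjacent to $x$,'' whose expectations are $n^{-2/3+\alpha_i\delta}$ and hence $o(1)$ or $n^{o(1)}$ for small $i$. For such sparse counts, constant-factor upper-tail concentration uniformly over $\Theta(n^2)$ edges is exactly the delicate part: the upper tail is governed by clustered configurations (e.g., vertices with atypically many common neighbours with $x$, or atypically many short paths to $x$), and neither Janson nor a one-line Kim--Vu application delivers the stated bound without verifying all the conditional expectations --- which is essentially the same work as the degree control you are trying to avoid. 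This is why the paper proceeds differently: Lemma~\ref{lem:Decomp} algorithmically extracts a sparse subgraph $H$ with level sets $M_{(i,j,\ell)}$ of prescribed sizes and with the inter-level degrees deterministically bounded by $n^{(\alpha_i-\alpha_{i-1})\eps}$ (Property~\ref{decomp:degreebound}); it then fixes \emph{one} tree $T_v$ per candidate root and bounds the number of trees killed by any single edge deterministically by a path count, $|S_e|\leq n^{2/3-\eps}$ (Claim~\ref{cl:Se}), before exposing the edges at $r$ to find a surviving root. Your approach could probably be pushed through, but only by supplying a careful higher-moment or polynomial-concentration argument for $\max_{uv}\mathcal{N}_{uv}$ (with an $n^{o(\delta)}$ rather than constant loss); as it stands, the step you defer as ``tedious but routine'' is the actual content of the lemma.
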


\begin{lemma}[Good structures for Stage II]\label{lem:tec2_connector}
For every constant $\delta >0$ 
there exists an integer $k_2\in\mathbb{N}$ such that the following holds.
Let $G\sim G_{n,p}$ with $p=n^{-2/3+\delta}$,
and let $A\subset V(G)$ be of size $n^{1/3}$,
then with probability at least 
$1-n^{-1}$
the following is true for every $x\in V\setminus A$:

Let $M$ be any subset of $V\setminus \{x\}$,
let $B$ be any subgraph of $G$ with
$d_B(v) \leq \ln^2 n$ for every $v\in V\setminus M$ and such that $e(B)\leq n^{2/3}\ln n$, then
$G$ contains
a vertex $z\in N_{G\setminus B}(A)$ and four vertex disjoint copies $T_{\ell}$ of $\mathcal{T}_{k_2}$ with roots $r_{\ell}$
such that for every $\ell\in [4]$ we have: 
{\begin{enumerate}[label=\itmarab{S}]
\item\label{good2:x} $x\notin V(T_{\ell})$,
\item\label{good2:top} $zr_{\ell}\in E(G\setminus B)$,
\item\label{good2:edges} if $\overrightarrow{T_{\ell}}$ is the orientation
where the edges are oriented from the root to the leaves, then for every arc $\overrightarrow{uw}\in E(\overrightarrow{T_{\ell}})$ we have either $uw\notin E(B)$ or ($uw\in E(B)$ and $w\in M$),
\item\label{good2:leaves} for every leaf $v$ of $T_{\ell}$ we have $vx\in E(G\setminus B)$. 
\end{enumerate}}

\end{lemma}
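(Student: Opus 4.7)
I will prove Lemma~\ref{lem:tec2_connector} by a first-moment deletion argument: I identify in $G$ an abundance of labelled ``super-gadgets'' of the desired form, and show that any admissible $B$ destroys only a vanishing fraction of them. Fix $x\in V\setminus A$ and let $k_2$ be a constant (to be specified, depending only on $\delta$). Call a \emph{super-gadget at~$x$} any tuple $(a,z,T_1,T_2,T_3,T_4)$ consisting of distinct vertices $a\in A$, $z\in V\setminus(A\cup\{x\})$, and four vertex-disjoint copies $T_\ell$ of $\mathcal{T}_{k_2}$ with roots $r_\ell$, all distinct from $x$, such that the edges $az$, each $zr_\ell$, all tree edges of each $T_\ell$, and every edge $vx$ (for $v$ a leaf of some $T_\ell$) lie in $E(G)$. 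Set $V_s:=4\cdot 2^{k_2}-1$ and $E_s:=6\cdot 2^{k_2}-3$; write $N(x)$ for the total number of super-gadgets at $x$ in $G$, and for an edge $e$ write $N_e$ for the number of them through~$e$.

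\textbf{Counting and concentration.} A direct calculation gives
\[
\Exp[N(x)] = \Theta\bigl(|A|\cdot n^{V_s-2}\cdot p^{E_s}\bigr) = \Theta\bigl(n^{-2/3+(6\cdot 2^{k_2}-3)\delta}\bigr),
\]
which I make at least $n^{10}$ by choosing $k_2$ large enough depending on $\delta$. Similarly,
\[
\Exp[N_e\mid e\in E(G)] = O\bigl(E_s\cdot|A|\cdot n^{V_s-4}\cdot p^{E_s-1}\bigr) = O\bigl(\Exp[N(x)]/(n^2 p)\bigr)
\]
uniformly in $e$. For each fixed $x$, a standard second-moment / Janson-type argument for subgraph counts in $G_{n,p}$ yields $N(x)\geq\tfrac12\Exp[N(x)]$ with probability $1-\exp(-n^{\Omega(1)})$; similarly, for each fixed edge $e$ and conditionally on $e\in E(G)$, I get $N_e\leq 2\Exp[N_e\mid e\in E(G)]$ with the same conditional failure probability. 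A union bound over the $n$ choices of~$x$ and the $\binom{n}{2}$ edges yields all these inequalities simultaneously with probability at least $1-n^{-1}$.

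\textbf{Deletion argument and main obstacle.} Assuming the above concentration, fix any admissible pair $(B,M)$ with $e(B)\leq n^{2/3}\ln n$ and any $x\in V\setminus A$. The number of super-gadgets at~$x$ that use at least one edge of $B$ is at most
\[
\sum_{e\in B} N_e \leq 2\,e(B)\cdot\Exp[N_e\mid e\in E(G)] = O\!\left(\frac{n^{2/3}\ln n}{n^2 p}\right)\Exp[N(x)] = O\!\left(\frac{\ln n}{n^{2/3+\delta}}\right)\Exp[N(x)] = o(\Exp[N(x)]).
\]
Hence $N(x)-\sum_{e\in B} N_e\geq\tfrac12\Exp[N(x)]-o(\Exp[N(x)])>0$, and therefore there exists a super-gadget using no edge of $B$. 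Its data $(z,r_1,\ldots,r_4,T_1,\ldots,T_4)$ witnesses properties~\ref{good2:x}--\ref{good2:leaves}: in particular $z\in N_{G\setminus B}(A)$ via the edge $za$, all edges $zr_\ell$ and all leaf-to-$x$ edges lie in $G\setminus B$, and the $M$-condition in~\ref{good2:edges} is vacuously satisfied since no tree arc is in~$E(B)$. The main technical obstacle will be the concentration step: since distinct super-gadgets share vertices and edges, the indicators defining $N(x)$ and $N_e$ are correlated, and a Janson-type variance computation is needed. Classifying pairs $(S,S')$ of super-gadgets by the number $j\geq 1$ of shared edges, each contribution to the variance is smaller than $\Exp[N(x)]^2$ by a factor that decays in $j$ and in $n^2 p$; these contributions are negligible once $k_2$ is taken large enough that $\Exp[N(x)]$ is a sufficiently large polynomial in~$n$, which is what ultimately fixes $k_2$ as a constant depending only on~$\delta$.
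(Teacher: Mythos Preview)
Your deletion argument has a genuine gap at the uniformity claim
\[
\Exp[N_e\mid e\in E(G)] = O\bigl(\Exp[N(x)]/(n^2 p)\bigr)\quad\text{uniformly in }e.
\]
This is false for edges $e$ incident to the fixed vertex $x$. If $e=xv$ is used in a leaf-to-$x$ position, then conditioning on $e$ pins down only \emph{one} new vertex (namely $v$; the vertex $x$ was already fixed), so the correct bound is $\Exp[N_e\mid e\in E(G)]=\Theta\bigl(\Exp[N(x)]/(np)\bigr)$, larger by a full factor of $n$. Plugging this into your sum gives
\[
\sum_{e\in B}N_e \;\lesssim\; e(B)\cdot\frac{\Exp[N(x)]}{np}
\;=\;\frac{n^{2/3}\ln n}{n^{1/3+\delta}}\,\Exp[N(x)]
\;=\;n^{1/3-\delta}\ln n\cdot\Exp[N(x)],
\]
which swamps $N(x)$. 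More directly: every super-gadget uses at least one edge incident to $x$, so if the adversary were allowed to put all of $x$'s incident edges into $B$, no super-gadget would survive at all. This is exactly why the hypothesis $d_B(v)\le\ln^2 n$ for $v\notin M$ (hence in particular $d_B(x)\le\ln^2 n$) is present in the lemma; your proof never invokes it, and the final sentence even declares the $M$-condition ``vacuously satisfied''. The paper's proof uses this hypothesis in an essential way: it only needs to avoid those Breaker edges whose lower endpoint is \emph{not} in $M$, and the degree bound then controls, level by level, how many canonical trees $T_v$ are destroyed.

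A secondary but real concern is the concentration step for the upper tails $N_e\le 2\,\Exp[N_e]$, which you describe as ``standard''. Janson-type inequalities give lower tails; upper tails for subgraph counts are considerably more delicate, and you would need them to hold for every edge $e$ simultaneously (an $n^2$-fold union bound) with sub-polynomial failure probability. Even after repairing the uniformity issue by separating edges at $x$ from the rest and invoking $d_B(x)\le\ln^2 n$, you would still owe a genuine argument here --- e.g.\ Kim--Vu polynomial concentration or a careful moment bound --- rather than an appeal to a second-moment calculation, which by itself gives nothing about upper tails.
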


\begin{figure}[h]
\begin{center}
\includegraphics[scale=0.5,page=2]{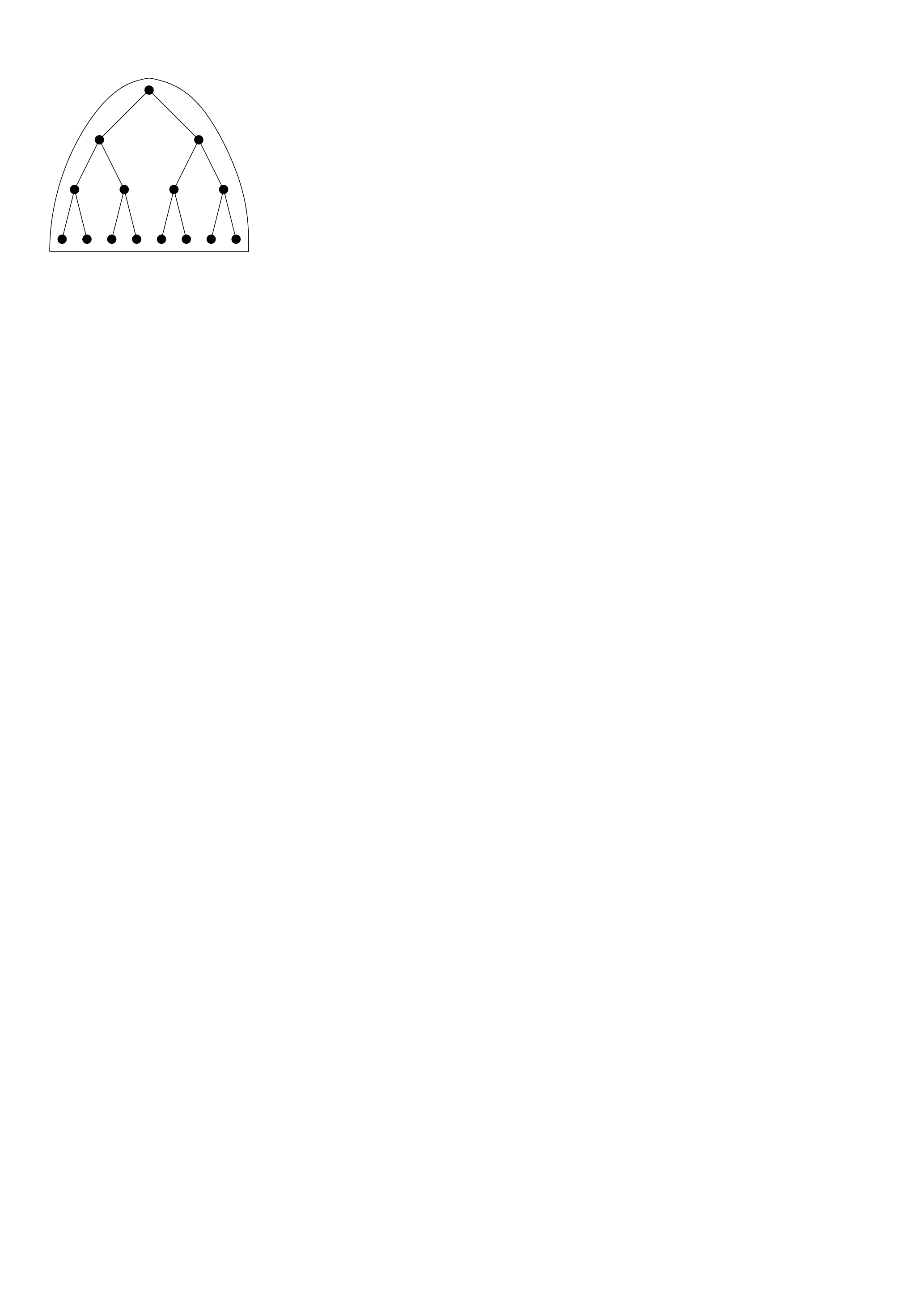}
\caption{Structure for Stage II}
\label{fig:structure2}
\end{center}
\end{figure}

We postpone the proofs of the above lemmas to Section~\ref{sec:structures} and recommend to read Connector's strategy
first.

\subsection{The strategy}

In the following we prove Theorem~\ref{main_connector}. 
Let $\eps>0$ be given, and let 
$k_1$ and $k_2$ be integers promised
by Lemma~\ref{lem:tec1_connector}
and Lemma~\ref{lem:tec2_connector} (applied with $\delta=\eps$),
respectively. Set $k:=\max\{k_1,k_2\}+2$.
Before revealing $G\sim G_{n,p}$ on the vertex set $V=[n]$, 
we fix an arbitrary set $A_1\subset [n]$ of size $n^{1/3}$ 
and an arbitrary set $A_2\subset [n]$ of size $n^{2/3}$.
Then, with probability tending to~1, all the properties
from 
Lemma~\ref{lem:tec1_connector}, Lemma~\ref{lem:tec2_connector} (applied for $A=A_1$)
and Lemma~\ref{lem:tec3_connector} (applied for $A=A_2$) hold.
From now on, let us condition on these.
Let Connector and Breaker play a $(2:2)$ game on $G$.
In the following we will first describe a strategy for Connector, and afterwards we will show that indeed it constitutes a winning strategy for the connectivity game on $G$, when we assume all the properties that we conditioned on above to hold.
The strategy will be described through the following two stages between which Connector alternates. If at any moment Connector cannot follow the strategy while $V\neq V_C$ still holds, then she forfeits the game. (We will show later that this does not happen). 

\medskip

{\bf Strategy description:} Fix a vertex $r\in V$ to be Connector's start vertex, and set $V_C=\{r\}$ before the game starts. As long as $V\neq V_C$ holds, Connector plays as follows, starting with Stage~I for her very first move.

\begin{itemize}
\item[] \textbf{Stage I:} Let $x\in V\setminus V_C$ 
be an arbitrary vertex, where we first prefer the
vertices of $A_1$, secondly prefer the vertices of 
$A_2$ and only afterwards consider all the remaining vertices. 
Connector then 
adds the vertex $x$ to $V_C$ within at most $k$ rounds.
The details of how she can do this
can be found later in the strategy discussion.
Immediately afterwards, if still $V\neq V_C$ holds, Connector proceeds
with Stage II.

\medskip

\item[] \textbf{Stage II:} 
Let $x\in V\setminus V_C$ be an arbitrary vertex
maximizing $d_B(x)$ among all vertices in $V\setminus V_C$.  
Connector then 
adds the vertex $x$ to $V_C$ within at most $k$ rounds.
The details of how she can do this
can be found later in the strategy discussion.
Immediately afterwards, if still $V\neq V_C$ holds, Connector proceeds
with Stage I.

\end{itemize}

\medskip

{\bf Strategy discussion:} If Connector can follow the strategy, without forfeiting the game, until $V=V_C$ holds,
then it is obvious that she succeeds in occupying a spanning tree and thus wins the game. It thus remains to prove that Connector always can follow the proposed strategy. 
In order to so, we start with two simple observations.

\begin{obs}\label{obs:degbound}
For as long as Connector can follow the proposed strategy, it holds that $d_B(v)<\ln^2 n$ for every $v\in V\setminus V_C$.
\end{obs}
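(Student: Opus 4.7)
The plan is to prove Observation~\ref{obs:degbound} by induction on the round count, exploiting the fact that Stage~II is specifically designed to target and remove, within at most $k$ rounds, the vertex of $V\setminus V_C$ of maximum Breaker-degree. Since $k$ is a constant depending only on $\eps$, Breaker plays at most $2k$ edges per stage, so any single vertex's Breaker-degree grows by at most $2k$ per stage. Between two consecutive starts of Stage~II (one Stage~II plus one intervening Stage~I), Breaker plays at most $4k$ edges, and hence the maximum of $d_B$ over $V\setminus V_C$ grows by at most $4k$.

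To turn this into a uniform bound, write $M_i$ for the maximum of $d_B(v)$ over $V\setminus V_C$ at the start of the $i$-th Stage~II, and let $x_i$ denote the corresponding target, so that $d_B(x_i)^{(t_{2i})}=M_i$. The argument above gives the ``slow growth'' bound $M_{i+1}\leq M_i+4k$. This I would combine with a double-counting bound on $\sum_i M_i$: each Breaker edge contributes to at most two summands, namely to $M_j$ only if one of its endpoints is the target $x_j$ of some Stage~II and the edge has been played by time $t_{2j}$. Since the game consists of at most $n$ stages of at most $k$ rounds each, $|E_B|=O(kn)$, and therefore $\sum_i M_i=O(kn)$. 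The slow-growth inequality together with this amortized sum bound forces $M_i$ to stay below $\ln^2 n$ uniformly: in order for $M_{i^\ast}$ to first reach $D$, the $M_i$'s over the preceding $\Theta(D/k)$ stages must each be at least $D/2$, contributing $\Omega(D^2/k)$ to $\sum_i M_i$, which for $D$ close to $\ln^2 n$ is inconsistent with the sum bound (and in any case the bound $\ln^2 n$ is much weaker than the $O(k)$-type bound one expects from a clean amortization).

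The main obstacle is the ``tied maxima'' case, in which Breaker distributes his edges so that several vertices of $V\setminus V_C$ share the current maximum $M$; then the removal of the single target $x_i$ during Stage~II does not reduce $M$. The quantitative amortization must absorb this: sustaining many vertices tied at high degree forces Breaker to spend edges faster than his per-stage budget of $4k$ allows, so such plateaus cannot persist over many consecutive stages. If the bare double-counting turns out to be too coarse to resolve this, I would supplement it with a concave potential such as $\sum_{v\in V\setminus V_C} d_B(v)^2$, which rewards Stage~II removals more strongly the larger $M$ is, and show that this potential must stay bounded throughout the game — closing the induction and yielding the desired uniform bound $d_B(v)<\ln^2 n$.
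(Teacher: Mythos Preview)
Your amortization is too coarse to reach the claimed bound, and the gap is quantitative, not cosmetic. From slow growth $M_{i+1}\le M_i+4k$ together with your double-counting bound $\sum_i M_i\le 2|E_B|=O(kn)$, the best you can extract is this: if $M_{i^\ast}=D$, then the preceding $\Theta(D/k)$ indices contribute $\Omega(D^2/k)$ to the sum, whence $D^2/k=O(kn)$ and $D=O(k\sqrt{n})$. For $D=\ln^2 n$ the quantity $D^2/k\approx \ln^4 n/k$ is tiny compared to $kn$, so there is no contradiction at all --- your sentence ``which for $D$ close to $\ln^2 n$ is inconsistent with the sum bound'' is simply false. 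The quadratic potential $\sum_{v\notin V_C} d_B(v)^2$ you propose as a fallback improves the exponent but still only yields a polynomial bound (roughly $D=O(kn^{1/3})$ by the same slow-growth trick), and higher fixed powers do no better. None of this gets anywhere near $\ln^2 n$, and the downstream argument genuinely needs a sub-polynomial bound: Lemma~\ref{lem:tec2_connector} is stated under the hypothesis $d_B(v)\le\ln^2 n$, and Case~3 requires $d_B(x)<n^{\eps/2}$.

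The paper's proof avoids all of this by recognising that Stage~II is exactly the BoxBreaker greedy strategy in an auxiliary Box game. One sets up $Box(8k,1;n-1,\dots,n-1)$ with a box $F_v$ per vertex; Breaker's move on edge $uw$ becomes a BoxMaker move in $F_u$ and $F_w$, and Connector's Stage~II choice of the current maximum-degree vertex is precisely BoxBreaker claiming in the fullest untouched box. Corollary~\ref{cor:boxgame} then gives directly that any untouched box holds at most $8k(\ln n+1)$ BoxMaker elements, i.e.\ $d_B(x)\le 8k(\ln n+1)$ whenever $x$ is selected in Stage~II; adding the at most $4k$ edges Breaker plays in the intervening $2k$ rounds yields $d_B(v)<\ln^2 n$ for all $v\notin V_C$. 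The logarithm here is intrinsic to the Box game analysis (an exponential-weight potential, not a polynomial one), and that is exactly the ingredient your proposal is missing.
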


\textit{Proof.} While the Connector-Breaker game on $G$ is going on, let us consider the Box game $Box(8k,1;n-1,\ldots,n-1)$
where for every vertex $i\in V(G)$ there is a box
$F_i$ of size $n-1$. In this auxiliary game, let Breaker take over the role of BoxMaker and let Connector be BoxBreaker in the following way. Whenever Breaker claims some edge $uw$ in the game on $G$, let BoxMaker claim one element in each of the boxes $F_u$ and $F_w$. Observe that this way, the number of BoxMaker's elements in any box $F_v$ will be equal to $d_B(v)$. 
Furthermore, whenever in Stage II Connector fixes some vertex $x$ of largest degree $d_B(x)$
(in order to add this vertex to $V_C$ within the following $k$ rounds),
let BoxBreaker claim an element in the box $F_x$.
Observe that everything is within the rules then,
as the latter always repeats within at most $2k$ rounds
in which BoxMaker may get up to $2k\cdot 4=8k$ new elements
over all the boxes.

Now, Corollary~\ref{cor:boxgame} ensures that 
whenever a vertex $x\in V\setminus V_C$ is selected for Stage II,
right at this moment we have 
\begin{equation*}
d_B(x)=|F_x|<8k(\ln n + 1).
\end{equation*}
Since such a vertex $x$ is always chosen to have maximal Breaker degree among all vertices in $V\setminus V_C$ and since such a choice always repeats within at most $2k$ rounds, we obtain
$$
d_B(v)<8k(\ln n + 1) + 2k\cdot 2 < \ln^2 n
$$
whenever $v\in V\setminus V_C$. This proves the observation.  \done

\begin{obs}\label{obs:roundsbound}
As long as Connector can follow the proposed strategy the following holds:
\begin{enumerate}
\item[(i)] If $A_1\not\subset V_C$, then $e(B)<n^{1/3} \ln n$.
\item[(ii)] If $A_2\not\subset V_C$, then $e(B)<n^{2/3} \ln n$.
\end{enumerate}
\end{obs}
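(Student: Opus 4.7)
The plan is to bound, under each hypothesis, the total number of rounds played so far, and then observe that Breaker claims at most two edges per round, so $e(B)$ is at most twice the number of rounds. Since each Stage (I or II) takes at most $k$ rounds and $k$ is a constant, it suffices to bound the number of completed stages.

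The key observation is monotonicity: $V_C$ only grows during the game, so if $A_1 \not\subset V_C$ at the current moment, then $A_1 \not\subset V_C$ also held at the start of every previous execution of Stage I. By the preference rule in Stage I, Connector then chose some $x \in A_1 \setminus V_C$ at each such previous Stage I. Since every Stage I adds a new vertex to $V_C$, distinct completed Stage I executions added distinct vertices of $A_1$. Consequently, the number of completed Stage I's is at most $|A_1 \cap V_C| \le |A_1|-1 = n^{1/3}-1$.

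For the round count, note that Connector strictly alternates Stage I and Stage II starting with Stage I, so the number of completed Stage II's is at most the number of completed Stage I's, and there is at most one stage currently in progress. Letting $s_1$ denote the number of completed Stage I's, the total number of rounds played so far is bounded by $(2s_1+1)k \le 2kn^{1/3}$, and hence
\[
e(B) \;\le\; 2 \cdot 2k n^{1/3} \;=\; 4k n^{1/3} \;<\; n^{1/3}\ln n
\]
for $n$ large enough, since $k$ is a constant. This proves (i).

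Part (ii) follows by exactly the same argument, using in addition that when $A_1 \subset V_C$ but $A_2 \not\subset V_C$, Connector's Stage I preference picks a vertex from $A_2$. Thus each completed Stage I execution added a distinct vertex of $A_1 \cup A_2$, giving $s_1 \le |A_1\cup A_2|-1 < 2n^{2/3}$, total rounds $< 4kn^{2/3}$, and $e(B) < 8kn^{2/3} < n^{2/3}\ln n$ for large $n$. There is no real obstacle here beyond carefully verifying the monotonicity claim and the alternation structure; the argument is a clean counting.
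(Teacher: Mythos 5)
Your proof is correct and follows essentially the same route as the paper's: bound the number of Stage~I executions by $|A_1|$ (resp.\ $|A_1\cup A_2|$) via the preference rule, note that Stage~I recurs every at most $2k$ rounds, and multiply by Breaker's two edges per round. Your version just spells out the monotonicity and alternation bookkeeping that the paper leaves implicit.
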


\textit{Proof.} Stage~I always repeats after at most $2k$ rounds. Since for Stage I Connector prefers the vertices of $A_1$ to be added to $V_C$, it takes her at most $2k|A_1|$ rounds until $A_1\subset V_C$ holds, if she is able to follow the strategy. Thus, as long as $A_1\not\subset V_C$ holds, Breaker cannot have more than $2k|A_1|\cdot 2 < n^{1/3}\ln n$ edges. This proves statement (i). Statement (ii) can be proven analogously.~\done

\medskip

Now, using the Observations~\ref{obs:degbound} and \ref{obs:roundsbound} as well as the properties from Lemma~\ref{lem:tec1_connector},
\ref{lem:tec2_connector} and \ref{lem:tec3_connector}, we finally will show that Connector can always follow the proposed strategy. That is, assuming that so far Connector could follow her strategy, we will show that when she 
fixes her next vertex $x$ according to Stage~I or Stage~II, she can really add this vertex to $V_C$ within at most $k$ rounds.
In order to do so, we will consider three cases.

\medskip

{\bf Case 1 ($A_1\not\subset V_C$):}
In this case we have $e(B)\leq n^{1/3}\ln n$ according to Observation~\ref{obs:roundsbound}.
Thus, by the property from Lemma~\ref{lem:tec1_connector}
we can find a copy $T$ of $\mathcal{T}_{k_1}$ in $G\setminus B$
such that $r$ is the root of $T$, such that $x\notin V(T)$ and such that every leaf of $T$ is adjacent to $x$ in $G\setminus B$.
In particular, $T$ is good with respect to $(x,G\setminus B)$.
Thus, following the base strategy $\mathcal{S}_x$ from Lemma~\ref{lem:connector:basic}, Connector can reach $x$ within $k_1\leq k$ rounds.

\medskip

{\bf Case 2 ($A_1\subset V_C$ and $A_2\not\subset V_C$):}
In this case we have $e(B)\leq n^{2/3}\ln n$ according to Observation~\ref{obs:roundsbound},
and $d_B(v)<\ln^2 n$ for every $v\in V\setminus V_C \subset V\setminus A_1$ according to Observation~\ref{obs:degbound}. Applying the property from 
Lemma~\ref{lem:tec2_connector} (with $M=V_C$ and $A=A_1$)
we can find a vertex $z\in N_{G\setminus B}(A_1)$ 
and four vertex disjoint copies $T_{\ell}$ of $T_{k_2}$
with roots $r_{\ell}$ such that for every $\ell\in [4]$ we have that $zr_{\ell}\in E(G\setminus B)$ and $T_{\ell}$ is good with respect to $(x,G\setminus B)$. In the first round, Connector
claims an edge between $A_1$ and $z$ which is possible
as $A_1\subset V_C$ and $z\in N_{G\setminus B}(A_1)$.
Afterwards, consider the pairwise disjoint sets
$$
E_{\ell}:=\{zr_{\ell}\}\cup E(T_{\ell})\cup \left\{
xw:~ w~\text{is a leaf of }T_{\ell}\right\}
$$
for $\ell\in [4]$. As in the meantime Breaker claims only two edges, there will be at least two of these sets that Breaker does not touch until Connector's next move. Without loss of generality let these be the sets $E_1$ and $E_2$. Then
the union $\{zr_1,zr_2\}\cup E(T_1)\cup E(T_2)$ induces a copy of $\mathcal{T}_{k_2+1}$,
which is good with respect to $(x,G\setminus B)$.
Therefore, following the base strategy $\mathcal{S}_x$ from Lemma~\ref{lem:connector:basic}, Connector can reach $x$ within at most $k_2+1$ further rounds.
Hence, in total, Connector needs at most $k_2+2\leq k$ rounds in this case.

\medskip

{\bf Case 3 ($A_1\cup A_2\subset V_C$ and $V_C\neq V$):}
According to Observation~\ref{obs:degbound},
we have $d_B(x)<\ln^2 n$ before Connector
wants to add $x$ to $V_C$.
Following the property from Lemma~\ref{lem:tec3_connector} 
(with $A=A_2$) we then conclude that $d_G(x,A_2)>n^{\eps/2}>d_B(x)$.
Therefore, since $A_2\subset V_C$, Connector immediately can claim an edge leading to $x$. \hfill $\Box$

\section{Analysis of Algorithm~\ref{alg:bad}}\label{sec:algorithm}

The aim of this section is to prove Lemma~\ref{lem:bad}.
 For that reason we will prove a slightly more general lemma, Lemma~\ref{lem:tec}, from which Lemma~\ref{lem:bad} will follow.
For Lemma~\ref{lem:tec} we are going to apply Algorithm~\ref{alg:bad} to a set $A=\{x_1,\ldots,x_t\}$ of vertices, later choosing one of them carefully to obtain a vertex $x$ as promised by
Lemma~\ref{lem:bad}. 
That is, we first fix $x_1$ and apply Algorithm~\ref{alg:bad} in order to determine the set $B^{x_1}$, then we repeat the algorithm for $x_2$ and so on. Amongst other properties we will obtain that it
is very likely that all the sets $B^{x_j}$ are pairwise disjoint and satisfy certain degree conditions. To simplify notation we set
\begin{equation}\label{eq:Bji}
B^{(j,i)} := \bigcup_{\ell < j} B^{x_{\ell}} \cup \bigcup_{k\leq i} B_k^{x_j}~ .
\end{equation}

\begin{figure}[h]
\begin{center}
\includegraphics[scale=0.7]{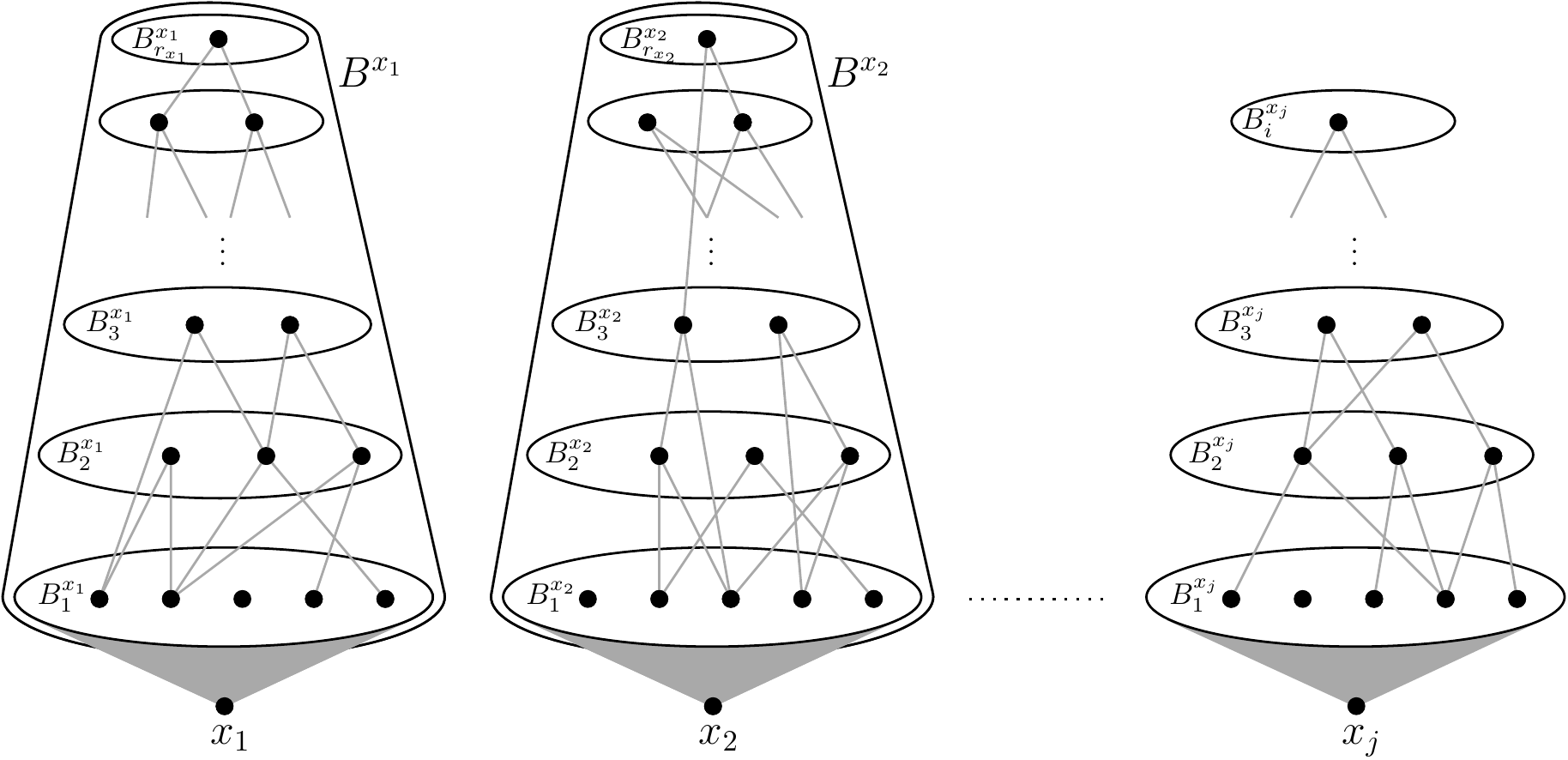}
\caption{Structure of $B^{(j,i)}$}
\label{fig:Bji}
\end{center}
\end{figure}

That is, $B^{(j,i)}$ is the set of all bad vertices that are determined immediately after $B_i^{x_j}$ is created. 
In particular, 
$B^{(t,r_{x_t})} = \bigcup_{x\in A} B^{x}$ is the union of all bad vertices after the algorithm is proceeded for
all vertices $x_j$.
Moreover, we let
$$
a(j,i):=
\begin{cases}
(j,i-1)~ , & i\neq 1\\
(j-1,r_{x_{j-1}})~, & i=1
\end{cases}
$$

\vspace{1mm}

denote the pair coming immediately before $(j,i)$ in lexicographic order, for $(j,i)\neq (1,1)$.

\newpage

\begin{lemma}[Technical Lemma]\label{lem:tec}
Let $n$ be a large enough integer,
let $\eps \geq 7 \ln\ln n / \ln n$
and let $t\in\mathbb{N}$ be any constant. 
For $p=n^{-2/3-\eps}$ generate a random graph $G\sim G_{n,p}$.
Then with probability at least $1-n^{-\eps/4}$ there
exists a set $A=\{x_1,\ldots,x_t\}\subset V(G)$
of size $t$, such that successively applying Algorithm~\ref{alg:bad} 
for $x_1,\ldots,x_t$ the following holds for every $j\in [t]$ and $i \leq \tilde{r}_j:= \min\{ r_{x_j}, \lceil 1/\eps \rceil\}$:

\begin{enumerate}[label=\itmarab{P}]

\item\label{prop:newx} 
$x_j\notin B^{a(j,1)} \cup N_G(B^{a(j,1)})$ ~ ,

\item\label{prop:sizeBji} 
$|B^{x_j}_i|<n^{(1-i\eps)/3}$ ~ ,

\item\label{prop:edgesBji} 
$e(B^{x_j}_i)=0$ ~ ,

\item\label{prop:disjoint} 
$ B_i^{x_j} \cap \left( N_G(B^{a(j,1)}) \cup B^{a(j,1)} \right)  = \varnothing $ ~ ,

\item\label{prop:neighbours} 
if we define 
$N_{(j,i)}^s:= \left\{ v\in V\setminus B^{(j,i)} : d_G\left(v, B^{(j,i)} \right) \geq s\right\}$ then
$$
|N_{(j,i)}^s| \leq \left( 2j\eps^{-1} + i \right) n^{\frac{3-s(1+\eps)}{3}}~~ \text{for every }s\in \{0,1,2,3\}~ ,
$$ 
\end{enumerate}
and for every $k\in [t]$ we have
\begin{enumerate}[label=\itmarab{P},start=6]
\item\label{prop:rounds}
  $r_{x_k} = \tilde{r}_k$.
\end{enumerate}
\end{lemma}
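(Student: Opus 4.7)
The plan is to reveal $G\sim G_{n,p}$ once and then pick $x_1,\ldots,x_t$ greedily, verifying the properties \ref{prop:newx}--\ref{prop:rounds} inductively. As preparation, I would condition on the a.a.s.\ maximum-degree bound $\Delta(G)\leq 2np=2n^{1/3-\eps}$ from Lemma~\ref{lem:degree_bound}, which already yields $|B_1^x|\leq n^{(1-\eps)/3}$ for every $x$ and so settles \ref{prop:sizeBji} at $i=1$.

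The heart of the argument is a single-run analysis of Algorithm~\ref{alg:bad} started at a fixed vertex $x$. The key structural observation is that every $v\in B_i^x$ must have \emph{at least one} neighbour in the newest layer $B_{i-1}^x$ (otherwise $v$ would have been added at a previous step). Combined with the independence of fresh edges---given $B_{<i}^x=S$, the set $B_i^x$ depends only on the $V\setminus(S\cup\{x\})$-to-$S$ edges, which have not been examined before---this lets us bound the conditional expectation of $|B_i^x|$ by
\begin{equation*}
n\cdot|B_{i-1}^x|\cdot|B_{<i}^x|\cdot p^2 .
\end{equation*}
Inductively assuming $|B_{i-1}^x|\leq n^{(1-(i-1)\eps)/3}$ and the geometric-sum bound $|B_{<i}^x|\leq 2n^{(1-\eps)/3}$ (valid because consecutive layers shrink by a factor $n^{-\eps/3}\leq(\ln n)^{-7/3}$), this evaluates to $O(n^{(1-i\eps)/3-2\eps})$, which is $n^{-\Omega(\eps)}$ below the target of \ref{prop:sizeBji}; a single Markov step upgrades it to the claimed high-probability bound. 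Given \ref{prop:sizeBji}, property \ref{prop:edgesBji} follows from $\binom{|B_i^x|}{2}p=O(n^{-\Omega(\eps)})$ (using independence of the edges \emph{inside} $B_i^x$), and \ref{prop:rounds} is automatic because $n^{(1-i\eps)/3}<1$ forces $B_i^x=\varnothing$ once $i>1/\eps$. An analogous first-moment bound $\Exp[|N_{(j,i)}^s|]\leq n\binom{|B^{(j,i)}|}{s}p^s$ with $|B^{(j,i)}|=O(jn^{(1-\eps)/3})$ yields \ref{prop:neighbours}.

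For the greedy selection, a single run at $x$ fails with probability $n^{-\Omega(\eps)}$ after union-bounding over $i\leq\lceil 1/\eps\rceil$, so Markov shows that the set of ``bad'' vertices in $V$ has size $\leq n^{1-\Omega(\eps)}$ with probability $1-O(n^{-\eps/4})$. At step $j$ the forbidden zone $Z_j$ of vertices at graph-distance $\leq 2$ from the already-built bad set $B^{a(j,1)}$ has size at most $|B^{a(j,1)}|(1+\Delta+\Delta^2)=O(n^{1-4\eps/3})=o(n)$, so a good vertex $x_j\in V\setminus Z_j$ exists, yielding \ref{prop:newx}. The distance-$\geq 3$ choice gives \ref{prop:disjoint} at $i=1$ outright. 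For $i\geq 2$ one argues inductively: conditional on $B^{a(j,1)}$ and on $B_{<i}^{x_j}\cap(B^{a(j,1)}\cup N_G(B^{a(j,1)}))=\varnothing$, the expected size of $B_i^{x_j}\cap(B^{a(j,1)}\cup N_G(B^{a(j,1)}))$ is at most $|N_G(B^{a(j,1)})\cup B^{a(j,1)}|\cdot|B_{i-1}^{x_j}|\cdot|B_{<i}^{x_j}|\cdot p^2=O(n^{-\Omega(\eps)})$, which is killed by Markov.

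The main obstacle will be juggling the many dependencies that accumulate as the sets $B^{x_j}$ are built sequentially, while retaining the $n^{-\Omega(\eps)}$ slack at every inductive step. Since $t$ is a constant and $\lceil 1/\eps\rceil=O(\ln n/\ln\ln n)$, the total number of events we union-bound over is only polylogarithmic and is comfortably absorbed by the slack, pushing the final failure probability down to $O(n^{-\eps/4})$.
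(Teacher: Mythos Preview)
Your layer-by-layer scheme and the Markov calculus are in the right spirit, and the numerology of the bounds is essentially correct, but the independence claims you rely on do not hold as stated, and the one-shot approach to \ref{prop:neighbours} cannot work.

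First, the assertion that ``given $B_{<i}^x=S$, the $V\setminus(S\cup\{x\})$-to-$S$ edges have not been examined'' is false: to build $B_{i-1}^x$ you already had to expose every edge from outside into $B_{<i-1}^x\subset S$, and it is precisely this exposure that defines the conditioning event. Hence the factor $|B_{<i}^x|\cdot p$ in your bound for $\Exp[|B_i^x|]$ is not a legitimate probability for a ``second edge''. The paper handles this by splitting into the two honest cases $d_G(v,B_{i-1}^x)\geq 2$ (two genuinely fresh edges) versus $d_G(v,B_{i-1}^x)=1$ together with $d_G(v,B_{<i-1}^x)=1$; the latter is bounded by $|N_G(B_{<i-1}^x)|\cdot|B_{i-1}^x|\,p$, i.e.\ one fresh edge times the already-determined size of the old neighbourhood. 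The same case split is what makes \ref{prop:disjoint} go through for $i\geq 2$, and there the bound on case (ii) in fact \emph{requires} the running control from \ref{prop:neighbours}.

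Second, and more seriously, the first-moment bound $\Exp[|N_{(j,i)}^s|]\leq n\binom{|B^{(j,i)}|}{s}p^s$ is simply not available: by the time $B^{(j,i)}$ is known, \emph{all} edges incident to it have been exposed, so $|N_{(j,i)}^s|$ is a deterministic function of the conditioning and there is nothing to average. The paper instead tracks $|N_{(j,i)}^s|$ incrementally, bounding $|N_{(j,i)}^s\setminus N_{a(j,i)}^s|$ using only the fresh edges into the newest layer $B_i^{x_j}$ and carrying the running total forward. This is exactly why the target in \ref{prop:neighbours} carries the growing prefactor $(2j\eps^{-1}+i)$: it absorbs one unit of increment per layer.

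Finally, the plan ``reveal $G$ once, then pick $x_j$ greedily'' is incompatible with the conditional arguments you later invoke for \ref{prop:disjoint} and \ref{prop:neighbours}: once $G$ is fully revealed there is no fresh randomness, and a union bound over all $n$ candidate vertices cannot be absorbed by the $n^{-\Omega(\eps)}$ slack. The paper instead exposes edges progressively and chooses each $x_j$ uniformly at random from $[n]$ after the previous bad sets are built; then $x_j\in B^{a(j,1)}\cup N_G(B^{a(j,1)})$ has probability below $n^{-1/3}$ by a size count, and every subsequent edge exposure at $x_j$ is genuinely fresh.
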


Before proving Lemma~\ref{lem:tec}, let us first show how it implies Lemma~\ref{lem:bad}.

\begin{proof}[Proof of Lemma~\ref{lem:bad}]
Apply Lemma~\ref{lem:tec} with $t=7$. Then a.a.s.~we can find a set $A=\{x_1,\ldots,x_t\}$ as promised by this lemma.
Now, fix any set $M\subset V(G)$ of size 3. 
Since $|A|=7$, it will be enough to verify the following two statements.

\begin{enumerate}
\item[(i)] Every vertex $x\in A$ satisfies \ref{Bprop:neighbours1}--\ref{Bprop:nonbad}. 

\item[(ii)] At most six vertices $x\in A$ do not satisfy \ref{Bprop:disjointM}.
\end{enumerate}

For (i), consider any $x_j\in A$.
Property~\ref{Bprop:neighbours1} follows immediately by the definition of $B_1^{x_j}$
and Property~\ref{prop:edgesBji}.
Moreover, Property~\ref{Bprop:nonbad} follows immediately from the halt condition of Algorithm~\ref{alg:bad}.
To see Property~\ref{Bprop:neighbours2}, let $v\in B_i^{x_j}$. By the algorithm, $v$ is added to $B_i^{x_j}$ if
$d_G \left(v,\bigcup_{k< i} B_k^{x_j} \right)\geq 2 $.
Moreover, we have $v\in V\setminus B^{a(j,i)}$,
because of Property~\ref{prop:disjoint} and since 
${B_i^{x_j}\cap \left( \bigcup_{k<i} B_i^{x_j}\right)=\varnothing}$ according to the algorithm.
Now, using the Properties~\ref{prop:neighbours} and~\ref{prop:rounds},
and provided $n$ is large enough,
we deduce $|N_{a(j,i)}^3| < n^{-\eps/2}$
and thus $v\notin N_{a(j,i)}^3=\varnothing$. This yields
$d_G \left(v,\bigcup_{k<i} B_k^{x_j} \right)\leq 
d_G \left(v, B^{a(j,i)} \right) \leq 2 $.
Finally, using that $e_G(B_i^{x_j})=0$ 
according to Property~\ref{prop:edgesBji}, we deduce
$d_G \left(v,\bigcup_{k\leq i} B_k^{x_j} \right) = 2 $, proving~\ref{Bprop:neighbours2}.

Let us prove (ii) then. For any $k<j$, we have  $B^{x_k}\subset B^{a(j,1)}$ by Definition (\ref{eq:Bji}) and 
since $B^{x_k}=\bigcup_{i\leq r_{x_k}} B_{i}^{x_k}$ by Algorithm~\ref{alg:bad}.
Thus, using Property~\ref{prop:disjoint} we conclude that
$B^{x_j}$ and $B^{x_k}$ are disjoint. 
Moreover, since $B^{x_k}\subset B^{a(j,i)}$
we also obtain that $N_G(B^{x_k})\subset N_G(B^{a(j,i)})$.
Thus, using Property~\ref{prop:disjoint} again,
we get that $G$ does not have any edges between $B^{x_j}$ and $B^{x_k}$.
As a consequence we have that every vertex $v$ which is adjacent to
but not contained in $B^{x_j}$ for some $j\in [7]$
needs
to be element of $V\setminus B^{(t,r_{x_t})}$.
However, according to Property~\ref{prop:neighbours} and since
$r_{x_j}\leq \lceil 1/\eps \rceil$
holds by Property~\ref{prop:rounds},
we obtain $N_{(t,r_{x_t})}^3 = \varnothing$
for large enough $n$.
This implies that every vertex of
$V\setminus B^{(t,r_{x_t})}$
is adjacent to at most two of the sets
$B^{x_j}$ with $j\in [7]$.

We conclude that at most 3 of the pairwise disjoint sets $B^{x_j}$ may contain a vertex
of $M$. If a vertex $v\in M$ belongs to some set $B^{x_j}$ with $j\in [7]$, then $v\notin B^{x_k}\cup N_G(B^{x_k})$
for every $k\neq j$. If otherwise a vertex $v\in M$
belongs to $V\setminus B^{(t,r_{x_t})}$, then
it is adjacent to at most two of the sets $B^{x_j}$. 
Hence, there are at most six vertices $x\in A$
such that $M\cap (B^x \cup N_G(B^x))\neq \varnothing$. This proves statement~(ii).
\end{proof}

\begin{proof}[Proof of Lemma~\ref{lem:tec}]
For the proof of Lemma~\ref{lem:tec} we expose the edges
of $G\sim G_{n,p}$
step by step with respect to the given algorithm, and only during the process we choose the vertices of $A$ randomly.
To be more precise, we proceed as follows:
We first choose $x_1$ uniformly at random from $V(G)=[n]$ and then apply Algorithm~\ref{alg:bad} for $x_1$. Once, Algorithm~\ref{alg:bad} has been applied for $x_{j-1}$ and afterwards $B^{x_{j-1}}$ is determined, we choose $x_j$ uniformly at random from $[n]$ and
apply Algorithm~\ref{alg:bad} for $x_j$. While doing this, we always expose only those edges which have not been exposed yet and which are needed to determine the next set $B_{i}^{x_j}$ in the algorithm. For example:
When applying the algorithm for $x_1$,
we first expose only the edges incident to $x_1$ so that we are able to determine $B_1^{x_1}$. Once this set is fixed, we expose all edges incident to $B_1^{x_1}$ that have not been exposed yet, so that we can find $B_2^{x_1}$.
We then expose all edges incident to $B_2^{x_1}$
that have not been exposed yet, and so on.

For the analysis of the algorithm, we consider the pairs $(j,i)$, with $j\in [t]$ and $i\in [\tilde{r}_j]$, in lexicographic order. We consider the following event:\\

~~~~ \begin{tabular}{ll}
$E_{(j,i)}$: & ~~ for all pairs until and including $(j,i)$ the Properties~\ref{prop:newx}--\ref{prop:neighbours} hold,\\ 
 & ~~ and the Property~\ref{prop:rounds} is true for all $k<j$~ .
\end{tabular}

\vspace{0.5cm}

We will show that
\begin{equation}\label{goal}
\Prob\left( \overline{E_{(j,i)}} \big| E_{a(j,i)} \right) < 5n^{-\eps/3}
\end{equation} 
for every pair $E_{(j,i)}$,
where $E_{a(1,1)}$ is the event which is always true.
Before going into detail,
let us first prove that Lemma~\ref{lem:tec} follows, once
(\ref{goal}) is proven.

\begin{clm}\label{claim_final}
If (\ref{goal}) holds, then 
$\Prob\left( E_{(t,r_{x_t})} ~ \text{and} ~ r_{x_t}=\tilde{r}_t \right) \geq 1- n^{-\eps/4}$.
\end{clm}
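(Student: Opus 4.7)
The plan is to iteratively apply~(\ref{goal}) to all pairs $(j,i)$ in lexicographic order via the chain rule combined with a union bound. Starting from the disjoint decomposition
\[
\overline{E_{(t,\tilde{r}_t)}} \;=\; \bigsqcup_{(j,i)} \bigl(\overline{E_{(j,i)}}\cap E_{a(j,i)}\bigr),
\]
where the union ranges over pairs from $(1,1)$ up to $(t,\tilde{r}_t)$ in lexicographic order, I will bound each summand by $\Prob(\overline{E_{(j,i)}}\cap E_{a(j,i)}) \leq \Prob(\overline{E_{(j,i)}} \mid E_{a(j,i)}) < 5n^{-\eps/3}$ via~(\ref{goal}). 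This reduces the problem to counting the total number of pairs. Since $j\in[t]$ with $t$ a fixed constant and $i\leq \tilde{r}_j \leq \lceil 1/\eps\rceil$ by the very definition of $\tilde{r}_j$, the number of pairs is at most $N:=t\lceil 1/\eps\rceil$, which yields $\Prob(\overline{E_{(t,\tilde{r}_t)}}) \leq 5Nn^{-\eps/3}$.

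Before bounding this further I need to verify that the event $E_{(t,\tilde{r}_t)}$ already forces the equality $r_{x_t}=\tilde{r}_t$, so that the event whose probability we want to lower-bound coincides with $E_{(t,\tilde{r}_t)}$. Since $\tilde{r}_t=\min\{r_{x_t},\lceil 1/\eps\rceil\}\leq r_{x_t}$ by definition, it suffices to rule out strict inequality. Supposing $\tilde{r}_t<r_{x_t}$ would force $\tilde{r}_t=\lceil 1/\eps\rceil$, and in particular $\tilde{r}_t\cdot\eps\geq 1$. Property~\ref{prop:sizeBji} applied at $(j,i)=(t,\tilde{r}_t)$ then gives $|B^{x_t}_{\tilde{r}_t}|<n^{(1-\tilde{r}_t\eps)/3}\leq 1$, hence $B^{x_t}_{\tilde{r}_t}=\varnothing$. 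By the halting rule of Algorithm~\ref{alg:bad} this implies $r_{x_t}\leq \tilde{r}_t-1$, contradicting $r_{x_t}>\tilde{r}_t$.

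Finally I will verify the numerical inequality $5Nn^{-\eps/3}\leq n^{-\eps/4}$, or equivalently $\ln(5N)\leq (\eps/12)\ln n$. Plugging in $N\leq t(1/\eps+1)$ together with the hypothesis $\eps\geq 7\ln\ln n/\ln n$ makes this hold for $n$ sufficiently large. I expect this last numerical verification to be the only delicate point, since the probabilistic content is entirely contained in~(\ref{goal}); the rest of the argument is just careful bookkeeping, plus the deterministic implication $E_{(t,\tilde{r}_t)}\Rightarrow r_{x_t}=\tilde{r}_t$ above.
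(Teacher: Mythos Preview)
Your proposal is correct and follows essentially the same approach as the paper: a union bound over all pairs $(j,i)$ using (\ref{goal}), combined with the deterministic implication $E_{(t,\tilde{r}_t)}\Rightarrow r_{x_t}=\tilde{r}_t$ obtained from~\ref{prop:sizeBji} and the halting rule of Algorithm~\ref{alg:bad}. The paper packages the union bound as a recursion over $j$, whereas you do one global sum; both yield the same final estimate $O(t/\eps)\cdot n^{-\eps/3}<n^{-\eps/4}$.

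There is one small point you should make explicit. In your decomposition you write the predecessor as $E_{a(j,i)}$, but for $i=1$ the paper defines $a(j,1)=(j-1,r_{x_{j-1}})$, while the actual previous pair in your lexicographic sequence is $(j-1,\tilde{r}_{j-1})$. For the ``first failure'' decomposition to work as stated you therefore need $r_{x_{j-1}}=\tilde{r}_{j-1}$ on the event $E_{(j-1,\tilde{r}_{j-1})}$, i.e.\ the very implication you prove for $j=t$ but now for every $j$. Your argument via~\ref{prop:sizeBji} applies verbatim to each $j$, so this is a one-line fix; the paper makes the same observation explicit by first proving that $E_{(j,r_{x_j})}$ and $E_{(j,\tilde{r}_j)}$ are equivalent events for all $j\in[t]$ before running the recursion.
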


\textit{Proof.}
Observe first that for every $j\in [t]$ the events 
$E_{(j,r_{x_j})}$ and $E_{(j,\tilde{r}_j)}$ are equivalent.
Indeed, by definition
$E_{(j,r_{x_j})}$ implies $E_{(j,\tilde{r}_j)}$, since $r_{x_j}\geq \tilde{r}_j$. 
Now, let $E_{(j,\tilde{r}_j)}$ be given
and let us explain why $E_{(j,r_{x_j})}$ follows then.
If we assume that the latter does not hold, then $\tilde{r}_j \neq r_{x_j}$,
and by definition of $\tilde{r}_j$ we then have
$\tilde{r}_j=\lceil 1/\eps \rceil < r_{x_j}$. Applying
\ref{prop:sizeBji} for $(j,\tilde{r}_j)$, which is given under assumption of 
$E_{(j,\tilde{r}_j)}$, we obtain $B_{\tilde{r}_j}^{x_j} = \varnothing$.
But this means that Algorithm~\ref{alg:bad}, when processed for vertex $x_j$, must have already stopped, i.e. $r_{x_j}<\tilde{r}_j$,
a contradiction. 

Moreover, by looking at the above argument more carefully we see that whenever one of the events $E_{(j,r_{x_j})}$ and $E_{(j,\tilde{r}_j)}$ holds,
we must have $r_{x_j}=\tilde{r}_j\leq \lceil 1/\eps \rceil$.

For every $j\in [t]$ we now conclude that
\begin{align*}
\Prob\left( \overline{E_{(j,r_{x_j})} } \right) 
&
= \Prob\left( \overline{E_{(j,\tilde{r}_j)}} \right)  
\leq \sum_{i=1}^{\tilde{r}_j} \Prob\left(
 \overline{E_{(j,i)}} \Big| E_{a(j,i)} \right)
 + \Prob\left( \overline{ E_{a(j,1)} } \right) \\
& 
\stackrel{(\ref{goal})}{\leq} \tilde{r}_j \cdot 5n^{-\frac{\eps}{3}} 
 + \Prob\left( \overline{ E_{(j-1,r_{x_{j-1}})} } \right) 
 <  \frac{10}{\eps} n^{-\frac{\eps}{3}} 
 + \Prob\left( \overline{ E_{(j-1,r_{x_{j-1}})} } \right) ~ .
\end{align*}
Applying the above inequality recursively we finally obtain
\begin{align*}
\Prob\left( \overline{E_{(t,r_{x_t})} ~ \text{and} ~ r_{x_t}=\tilde{r}_t } \right)  =
\Prob\left( \overline{E_{(t,r_{x_t})} } \right)  
< t\cdot \frac{10}{\eps} n^{-\frac{\eps}{3}} < n^{-\frac{\eps}{4}}  
\end{align*}
as claimed. \done

\medskip

It thus remains to prove (\ref{goal}). We start with a few observations.

\begin{obs}\label{obs:addvertex}
If Algorithm~\ref{alg:bad} adds a vertex $v$ to the set
$B_i^{x_j}$, then $d_G(v,B_{i-1}^{x_j})\geq 1$ and
\linebreak 
${d_G \left(v,\bigcup_{k\leq i-1} B_{k}^{x_j}\right)\geq 2}$.
\end{obs}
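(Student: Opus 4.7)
The plan is to read off both inequalities directly from the defining test condition inside Algorithm~\ref{alg:bad}. Throughout I will assume $i \geq 2$, since for $i = 1$ the algorithm defines $B_1^{x_j} = N_G(x_j)$ separately (and $v \in N_G(x_j)$ has only one neighbour in $B_0^{x_j} = \{x_j\}$, so the formulation of the observation is meant for the iterative stages $i \geq 2$).

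The second inequality is immediate. By the algorithm, $v$ is added to $B_i^{x_j}$ only if, at that point of the iteration, $v \notin B^{x_j} \cup \{x_j\}$ and $d_G(v, B^{x_j}) \geq 2$, where $B^{x_j}$ at that moment equals $\bigcup_{k \leq i-1} B_k^{x_j}$. This is exactly the claimed bound $d_G\bigl(v, \bigcup_{k \leq i-1} B_k^{x_j}\bigr) \geq 2$.

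For the first inequality $d_G(v, B_{i-1}^{x_j}) \geq 1$, I would argue by the minimality of the iteration at which $v$ first becomes eligible. Since $v$ is added at stage $i$ but not earlier, in particular $v$ was not placed in $B_{i-1}^{x_j}$; on the other hand, just before stage $i-1$ was processed, $v$ was still outside $B^{x_j} \cup \{x_j\}$, so the only reason for $v$ not having been included at stage $i-1$ is that the degree test failed, i.e.\ $d_G\bigl(v, \bigcup_{k \leq i-2} B_k^{x_j}\bigr) \leq 1$. Subtracting this from the bound established in the previous paragraph shows that $v$ must have at least one neighbour in the newly added layer $B_{i-1}^{x_j}$, which is the first inequality.

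I do not anticipate a genuine obstacle here: the entire observation is a bookkeeping statement extracted from how Algorithm~\ref{alg:bad} decides membership in each $B_i^{x_j}$. The only subtle point to track is that $v$ is still not in $B^{x_j} \cup \{x_j\}$ at the moment stage $i-1$ is processed, which is precisely what forces the degree count in $\bigcup_{k \leq i-2} B_k^{x_j}$ to have been strictly less than $2$ at that earlier stage.
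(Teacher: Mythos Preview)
Your proof is correct and follows essentially the same argument as the paper: the second inequality is read off directly from the membership test in Algorithm~\ref{alg:bad}, and the first follows because $v$ was eligible but not selected at stage $i-1$, forcing $d_G\bigl(v,\bigcup_{k\le i-2}B_k^{x_j}\bigr)\le 1$ and hence at least one neighbour in the new layer $B_{i-1}^{x_j}$. Your explicit remark that the statement is intended for $i\ge 2$ and your verification that $v\notin B^{x_j}\cup\{x_j\}$ at stage $i-1$ are exactly the points the paper handles (the latter implicitly via ``$v$ was not already added to some $B_k^{x_j}$ with $k<i$'').
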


\textit{Proof.}
Algorithm~\ref{alg:bad} adds a vertex $v$ to $B_i^{x_j}$ 
if $d_G \left(v,\bigcup_{k\leq i-1} B_{k}^{x_j}\right)\geq 2$
and only if $v$ was not already added to some $B_{k}^{x_j}$
with $k<i$. However, the latter ensures
$d_G \left(v,\bigcup_{k\leq i-2} B_{k}^{x_j}\right)\leq 1$
and thus $d_G(v,B_{i-1}^{x_j})\geq 1$. \done

\medskip

\begin{obs}\label{obs:ifEaji}
If $E_{a(j,i)}$ holds, then the following is true:
\begin{enumerate}
\item[(i)] $\left| \bigcup_{k\leq i-1} B_k^{x_j} \right| 
\leq \left|B^{a(j,i)}\right| < n^{1/3}$
and $\left| \bigcup_{k\leq i-1} N_G(B_k^{x_j}) \right| 
\leq \left|N_G(B^{a(j,i)})\right| < n^{2/3-\eps}$.
\item[(ii)] If Algorithm~\ref{alg:bad} adds a vertex $v$ to $B_i^{x_j}$, then $v\in V\setminus B^{a(j,i)}$.
\end{enumerate}
\end{obs}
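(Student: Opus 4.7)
The plan is to treat parts~(i) and~(ii) separately; both are immediate consequences of the properties already assumed in $E_{a(j,i)}$, together with Observation~\ref{obs:addvertex}, so no further random-graph input is needed.

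For part~(i), I will first unfold definition~(\ref{eq:Bji}): if $i\geq 2$ then $a(j,i)=(j,i-1)$ and $B^{a(j,i)}=\bigcup_{\ell<j}B^{x_\ell}\cup\bigcup_{k\leq i-1}B_k^{x_j}$, while if $i=1$ both unions on the left of the claimed inclusions are empty. The two set inclusions in (i) then follow at once. For the size estimate I will sum \ref{prop:sizeBji} over all pairs $(\ell,k)$ contributing to $B^{a(j,i)}$, using \ref{prop:rounds} (so each $r_{x_\ell}\leq\lceil 1/\eps\rceil$) together with the geometric bound $\sum_{k\geq 1}n^{-k\eps/3}\leq 2n^{-\eps/3}$ (valid for large $n$). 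This gives $|B^{x_\ell}|<2n^{1/3-\eps/3}$ for each $\ell<j$ and the analogous bound for $\bigcup_{k\leq i-1}B_k^{x_j}$, so altogether $|B^{a(j,i)}|\leq (2t+2)\,n^{1/3-\eps/3}$. The hypothesis $\eps\geq 7\ln\ln n/\ln n$ makes $n^{\eps/3}\geq(\ln n)^{7/3}$, which is more than enough to absorb the constant prefactor and force $|B^{a(j,i)}|<n^{1/3}$. The neighborhood bound is then obtained from $N_G(B^{a(j,i)})\subseteq B^{a(j,i)}\cup N^1_{a(j,i)}$ and an application of \ref{prop:neighbours} at $s=1$, again absorbing the polylogarithmic prefactor into the $n^{\eps/3}$ slack.

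For part~(ii), let $v$ be a vertex that Algorithm~\ref{alg:bad} adds to $B_i^{x_j}$. The algorithm itself already excludes $\bigcup_{k\leq i-1}B_k^{x_j}\cup\{x_j\}$ from consideration, so all that remains is to rule out $v\in B^{a(j,1)}=\bigcup_{\ell<j}B^{x_\ell}$. If $i=1$, then $v\in N_G(x_j)$, so $v\in B^{a(j,1)}$ would give $x_j\in N_G(B^{a(j,1)})$, contradicting \ref{prop:newx}. If $i\geq 2$, Observation~\ref{obs:addvertex} supplies a neighbor $u\in B_{i-1}^{x_j}$ of $v$; then $v\in B^{a(j,1)}$ would put $u\in N_G(B^{a(j,1)})\cap B_{i-1}^{x_j}$, contradicting \ref{prop:disjoint} applied at the pair $(j,i-1)$, which is part of $E_{a(j,i)}$.

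The only mild subtlety is in part~(ii): one must distinguish carefully between $B^{a(j,1)}$, the ``frozen'' union coming from earlier iterations, and $B^{a(j,i)}$, which additionally absorbs $B_k^{x_j}$ for $k\leq i-1$. Since the algorithm handles the latter pieces directly, the only real work is the one-line contradiction against \ref{prop:newx} or \ref{prop:disjoint}; nothing in the proof is expected to be technically hard.
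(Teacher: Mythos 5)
Part (ii) of your proposal is correct and is essentially the paper's own argument (algorithm excludes $\bigcup_{k\leq i-1}B_k^{x_j}\cup\{x_j\}$; membership in $B^{a(j,1)}$ is ruled out via \ref{prop:newx} when $i=1$ and via Observation~\ref{obs:addvertex} plus \ref{prop:disjoint} when $i\geq 2$). The first inequality of part (i), $|B^{a(j,i)}|<n^{1/3}$, is also fine: summing \ref{prop:sizeBji} over the at most $2j\eps^{-1}$ contributing sets and using $n^{\eps/3}\geq(\ln n)^{7/3}$ is exactly what the paper does.

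However, your derivation of the neighbourhood bound $|N_G(B^{a(j,i)})|<n^{2/3-\eps}$ has a genuine gap. You propose $N_G(B^{a(j,i)})\subseteq B^{a(j,i)}\cup N^1_{a(j,i)}$ together with \ref{prop:neighbours} at $s=1$. But that property only yields
\begin{equation*}
\bigl|N^1_{a(j,i)}\bigr|\;\leq\;\left(2j\eps^{-1}+i\right)n^{\frac{2-\eps}{3}}\;=\;\left(2j\eps^{-1}+i\right)n^{\frac{2}{3}-\frac{\eps}{3}}\,,
\end{equation*}
which \emph{exceeds} the target $n^{2/3-\eps}$ by a factor of order $n^{2\eps/3}$ times the prefactor: the slack runs in the wrong direction, so there is nothing for the polylogarithmic factor to be ``absorbed'' into, and the step fails. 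The paper instead bounds $|N_G(B^{a(j,i)})|$ by $|B^{a(j,i)}|$ times the almost-sure maximum degree $2n^{1/3-\eps}$ of $G$ from Lemma~\ref{lem:degree_bound} (inequality~(\ref{cl:deg})), obtaining $2jn^{(1-\eps)/3}\cdot 2n^{1/3-\eps}=4j\,n^{2/3-4\eps/3}<n^{2/3-\eps}$. Note that this inequality is \emph{not} derivable from the properties packaged into $E_{a(j,i)}$ alone; one must additionally condition on the typical degree bound of $G$, which is the route you need to take here.
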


\textit{Proof.}
If $E_{a(j,i)}$ holds, we obtain
\begin{align*}
|B^{a(j,i)}| & 
\stackrel{ (\tref{eq:Bji}) }{\leq}
 \sum_{k<j} \sum_{\ell \leq r_k} |B_{\ell}^{x_k}|
+ \sum_{\ell<i} |B_{\ell}^{x_j}| 
 \stackrel{ \tref{prop:sizeBji} }{\leq} 
\sum_{k<j} \sum_{\ell \leq r_k} n^{\frac{1-\ell \eps}{3}}
+ \sum_{\ell<i} n^{\frac{1-\ell\eps}{3}} < 2jn^{\frac{1-\eps}{3}} < n^{\frac13}
\end{align*}
and
\begin{equation*}
|B^{a(j,i)}\cup N_G(B^{a(j,i)})| 
\stackrel{ (\ref{cl:deg}) }{\leq} 
2jn^{\frac{1-\eps}{3}} + 2jn^{\frac{1-\eps}{3}}\cdot 2n^{\frac13-\eps} < n^{\frac23-\eps}
\end{equation*}
provided $n$ is large enough. Thus, (i) follows.

For (ii) observe that,
according to the algorithm, no vertex from $\bigcup_{k\leq i-1} B_k^{x_j}$
can be added to $B_i^{x_j}$.
Moreover, using Property~\ref{prop:disjoint}, no vertex in $B^{a(j,1)}$ has a neighbour in $B_{i-1}^{x_j}$
(or in $\{x_j\}$ in the case when $i=1$, because of \ref{prop:newx}), while every vertex
being added to $B_i^{x_j}$ needs to have such a neighbour according to Observation~\ref{obs:addvertex}
(or since $B_1^{x_j}=N_G(x_j)$ if $i=1$).
It thus follows that no vertex from $B^{a(j,i)}=B^{a(j,1)} \cup \bigcup_{k\leq i-1} B_k^{x_j}$ is added to $B_i^{x_j}$.~\done

\medskip

Now, for each $j\in [t]$ and $i\in [\tilde{r}_j]$ we will prove (\ref{goal}),
by showing that, under condition of $E_{a(j,i)}$, each of the Properties~\ref{prop:newx}--\ref{prop:neighbours} in Lemma~\ref{lem:tec} fails to hold  for the pair $(j,i)$ with probability smaller than $n^{-\eps/3}$. This is obviously enough for showing 
(\ref{goal}) when $i>1$. To get (\ref{goal}) for $i=1$, 
recall that under condition of 
$E_{a(j,1)}=E_{(j-1,r_{j-1})}$ we also have that
$r_{x_{j-1}}=\tilde{r}_{j-1}$
(as shown in the proof of Claim~\ref{claim_final}), making sure that
\ref{prop:rounds} holds for $(j,1)$ as well.
We discuss each of the Properties~\ref{prop:newx}--\ref{prop:neighbours} seperately.

\medskip

{\bf Property \ref{prop:newx}: } 
The statement is trivially true for $j=1$. So, let $j>1$
and let us condition on $E_{a(j,1)}$.
The vertex $x_j$ is chosen uniformly at random from $[n]$
after Algorithm~\ref{alg:bad} has been applied for $x_1,\ldots,x_{j-1}$ and $B^{a(j,1)}$ was determined. 
Now, conditioned on $E_{a(j,1)}$, we have
$|B^{a(j,1)}\cup N_G(B^{a(j,1)})| < n^{2/3}$
due to  Observation~\ref{obs:ifEaji}.
It thus follows that
\begin{equation}\label{P1bound}
\Prob \left( \text{\ref{prop:newx} fails for $j$} ~
\big| ~ E_{a(j,1)} \right) < n^{-\frac13} <n^{-\frac{\eps}{3}} ~ .
\end{equation}

{\bf Property \ref{prop:sizeBji}: } Let $j\in [t]$.
Consider first the case when $i=1$. Then
$B_1^{x_j}=N_G(x_j)$ and using Claim~\ref{cl:deg} we have
\begin{equation*}
\Prob \left( \text{\ref{prop:sizeBji} fails for $B_{1}^{x_j}$} \big| E_{a(j,1)}\right) < \exp(-n^{\frac13-2\eps}) < n^{-\eps}~ .
\end{equation*}
So, let $i>1$ from now on and consider the moment 
immediately after $B_{i-1}^{x_j}$ was determined, i.e. when
all remaining edges incident to $B_{i-1}^{x_j}$
get exposed in order to determine $B_i^{x_j}$.

When we condition on $E_{a(j,i)}$, 
only vertices from $v\in V\setminus B^{a(j,i)}$ can be added to $B_i^{x_j}$ according to Observation~\ref{obs:ifEaji}.
Moreover, before $B_{i-1}^{x_j}$
was determined, for every vertex $v\in V\setminus B^{a(j,i)}$
all the edges towards $B_{i-1}^{x_j}$
have not been exposed so far.
Now, if a vertex $v\in V\setminus B^{a(j,i)}$ is added to $B_{i}^{x_j}$ then by Observation~\ref{obs:addvertex} one of the following two options needs to happen:
\begin{enumerate}
\item[(i)] $d_G(v,B_{i-1}^{x_j})\geq 2$, or
\item[(ii)] $d_G(v,B_{i-1}^{x_j})= 1$ and $d_G(v,\bigcup_{k<i-1}B_{k}^{x_j})\geq 1$.
\end{enumerate}

Conditioned on $E_{a(j,i)}$, the expected number of vertices in (i) is smaller than 
$n \cdot |B_{i-1}^{x_j}|^2\cdot  p^2
\stackrel{\tref{prop:sizeBji}}{<} n^{1/3-2(i+2)\eps/3}~ .
$
For (ii), observe that $d_G(v,\bigcup_{k<i-1}B_{k}^{x_j})\geq 1$ 
means
$v\in \bigcup_{k<i-1} N_G(B_{k}^{x_j})$.
Since we have
$
\left| \bigcup_{k<i-1} N_G\left(  B_{k}^{x_j} \right) \right|
< n^{2/3-\eps}
$
according to Observation~\ref{obs:ifEaji},
we get that 
the expected number of vertices in $V\setminus B^{a(j,i)}$ satisfying (ii) is at most
$
n^{2/3 - \eps}\cdot |B_{i-1}^{x_j}|\cdot p 
\stackrel{\tref{prop:sizeBji}}{<} n^{1/3-(i+5)\eps/3}~ .
$
Summing up, we get that the (conditional) 
expected size of $B_{i}^{x_j}$
is at most
$$
n^{\frac{1-2(i+2)\eps}{3}} + n^{\frac{1-(i+5)\eps}{3}}
< n^{\frac{1-(i+4)\eps}{3}}
$$
and thus, using Markov's inequality (Lemma~\ref{lem:markov}), we obtain
\begin{equation}\label{P2bound}
\Prob \left( \text{\ref{prop:sizeBji} fails for $(j,i)$} \big| E_{a(j,i)}\right) < n^{-\frac{4\eps}{3}} < n^{-\eps}~ .
\end{equation}

{\bf Property \ref{prop:edgesBji}: }
Again we condition on the event $E_{a(j,i)}$. 
By Observation~\ref{obs:addvertex}
all the vertices we add to $B_i^{x_j}$ need to come from
$V\setminus B^{a(j,i)}$. Thus, when $B_i^{x_j}$ 
is determined, neither of the edges in $E(B_i^{x_j})$ has been exposed before. With probability at least $1-n^{-4\eps/3}$
we get $|B_i^{x_j}|<n^{(1-i\eps)/3}$,
according to (\ref{P2bound}).
If we condition on the latter, the expectation of 
$e_G(B_{i}^{x_j})$ is smaller than
$
|B_{i}^{x_j}|^2 \cdot p  \stackrel{\tref{prop:sizeBji}}{<} n^{-4\eps/3} ~ .
$
Thus, using Markov's inequality (Lemma~\ref{lem:markov}),
\begin{equation*}
\Prob\left(\text{\ref{prop:edgesBji} fails for $(j,i)$} \big| E_{a(j,i)} \right) \leq n^{-\frac{4\eps}{3}} + n^{-\frac{4\eps}{3}} < n^{-\frac{\eps}{3}}~ .
\end{equation*}

{\bf Property \ref{prop:disjoint}:}
Let $i=1$. If $j=1$ then the statement is trivially true. Otherwise, we know from (\ref{P1bound})
that, under condition of $E_{a(j,i)}$, we have
$x_j\notin B^{a(j,1)} \cup N_G(B^{a(j,1)})$ with probability at least $1-n^{-1/3}$.
This implies $B_1^{x_j}\cap B^{a(j,1)}=\varnothing$
and thus it remains to check that it is unlikely to have a vertex from 
$N_G(B^{a(j,1)})\setminus B^{a(j,1)}$ landing in $B_1^{x_j}$.
Note that before $B_1^{x_j}$ gets determined none
of the edges between $N_G(B^{a(j,1)})\setminus B^{a(j,1)}$ and $x_j$ has been exposed so far, when
$x_j\notin B^{a(j,1)} \cup N_G(B^{a(j,1)})$. Thus, using
Observation~\ref{obs:ifEaji},
we (conditionally) expect at most
$
|N_G(B^{a(j,1)})|\cdot p 
< n^{2/3-\eps} \cdot p = n^{-2\eps}
$
vertices in $\left( N_G(B^{a(j,1)})\setminus B^{a(j,1)}\right) 
\cap B_1^{x_j}$. It follows that  
$$
\Prob\left(\text{\ref{prop:disjoint} fails for $(j,1)$} \big| E_{a(j,1)} \right)
 < n^{-\frac13} + n^{-2\eps} < n^{-\eps}~ .
$$

Let $i>1$ then. Under assumption of $E_{a(j,i)}$, we have that
$B_{i-1}^{x_j} \cap N_G(B^{a(j,1)}) = \varnothing$
according to \ref{prop:disjoint}.
But then, according to Observation~\ref{obs:addvertex}, no vertex from
$B^{a(j,1)}$ is added to $B_i^{x_j}$, giving that
$B_i^{x_j}\cap B^{a(j,1)} = \varnothing$.
It thus remains to check that it is unlikely to have a vertex from 
$N_G(B^{a(j,1)})\setminus B^{a(j,1)}$ landing in $B_i^{x_j}$.
Using that $B_{i-1}^{x_j}\cap B^{a(j,1)} = \varnothing$ by \ref{prop:disjoint}, we note that before $B_i^{x_j}$ gets determined, no edge between $N_G(B^{a(j,1)})\setminus B^{a(j,1)}$ and $B_{i-1}^{x_j}$ has been exposed.
Now, applying Observation~\ref{obs:addvertex},
a vertex $v$ from $N_G(B^{a(j,1)})\setminus B^{a(j,1)}$ 
is added to $B_{i}^{x_j}$ 
if 
\begin{enumerate}
\item[(i)] $d_G(v,B_{i-1}^{x_j})\geq 2$, or
\item[(ii)] $d_G(v,B_{i-1}^{x_j})= 1$ and $d_G(v,\bigcup_{k<i-1}B_{k}^{x_j})\geq 1$.
\end{enumerate}

Hereby, again using Observation~\ref{obs:ifEaji} as well as \ref{prop:sizeBji}, the (conditional) expected number of vertices in (i) is at most
$
|N_G(B^{a(j,i)})|\cdot |B_{i-1}^{x_j}|^2 p^2 < n^{-3\eps}~ .
$
For (ii), observe that 
$N_G(B^{a(j,1)})\setminus B^{a(j,1)}\subset V \setminus B^{a(j,i)}$ holds, since
$\bigcup_{k<i}B_{k}^{x_j}$ and $N_G(B^{a(j,1)})$ are disjoint due to Property~\ref{prop:disjoint}.
Therefore, if
$d_G(v,\bigcup_{k<i-1}B_{k}^{x_j})\geq 1$ 
and $v\in N_G(B^{a(j,1)})\setminus B^{a(j,1)}$, then
$v\in N_{a(j,i)}^2$.
Using \ref{prop:neighbours} and~\ref{prop:rounds}, we have that 
$\left| N_{a(j,i)}^2 \right|< n^{1/3}$.
Thus, the (conditional) expected number of vertices 
satisfying (ii) is bounded from above by
$
n^{1/3}\cdot |B_{i-1}^{x_j}|\cdot p 
\stackrel{\tref{prop:sizeBji}}{<} n^{-4\eps/3}~ .
$
Summing up, we expect at most
$$
n^{-3\eps} + n^{-\frac{4\eps}{3}} < n^{-\eps} 
$$ 
vertices in $B_i^{x_j}\cap (N_G(B^{a(j,1)}) \setminus B^{a(j,1)} )$.
By Markov's inequality (Lemma~\ref{lem:markov}) we obtain
\begin{equation*}
\Prob\left(\text{\ref{prop:disjoint} fails for $(j,i)$} \big| E_{a(j,i)} \right)
 < n^{-\eps}~ .
\end{equation*}

{\bf Property~\ref{prop:neighbours}:}
Consider first the case when $(j,i)=(1,1)$. 
The bound on $|N_{(1,1)}^0|$ is trivially true.
So, let $s\geq 1$. Immediately after $B^{(1,1)}=N_G(x_1)$ is determined, 
none of the edges between $V\setminus B^{(1,1)}$
and $B^{(1,1)}$ has been exposed. Moreover, according to Lemma~\ref{lem:degree_bound}, with 
probability at least $1-\exp(-n^{1/3-2\eps})$
we have $|B^{(1,1)}|<n^{(1-\eps)/3}$.
Thus, if we condition on that bound, the expected size
of $N_{(1,1)}^s$ is at most
$
n\cdot \left( |B^{(1,1)}|\cdot p \right)^s
< n^{(3-s(1+4\eps))/3}~ .
$
It follows that 
\begin{align*}
\Prob\Big(\text{\ref{prop:neighbours} fails for $(1,1)$} \Big) & 
\leq \sum_{s\in [3]} \frac{n^{\frac{3-s(1+4\eps)}{3}}}{\left( 2\eps^{-1} + 1 \right) n^{\frac{3-s(1+\eps)}{3}}} 
< n^{-\frac{\eps}{3}}~ .
\end{align*}

So let $(j,i)\neq (1,1)$ from now on. 
Again, the bound on $|N_{(j,i)}^0|$ is trivially true.
Under condition of
$E_{a(j,i)}$ we have
$
|B_i^{x_j}| < n^{(1-i\eps)/3}
$
with probability  at least $1-n^{-4\eps/3}$,
according to (\ref{P2bound}). Condition on the latter from now on. Given that $E_{a(j,i)}$ holds, we additionally get
\begin{align*}
\left| N_{a(j,i)}^s \right|
\stackrel{\tref{prop:neighbours}}{\leq}
\begin{cases}
\left( 2j\eps^{-1} + i-1 \right) n^{\frac{3-s(1+\eps)}{3}}
	& ~ \text{if }i> 1\\
\left( 2(j-1)\eps^{-1} + r_{x_{j-1}} \right) n^{\frac{3-s(1+\eps)}{3}} 
\stackrel{\tref{prop:rounds}}{<} \left( 2j\eps^{-1} + i-1 \right) n^{\frac{3-s(1+\eps)}{3}}
	& ~ \text{if }i = 1
\end{cases}
\end{align*}
for every $s\in \{0,1,2,3\}$.  Thus,
\begin{align}\label{neighbour_step1}
\left| N_{(j,i)}^s \right|
& = \left| N_{(j,i)}^s \cap N_{a(j,i)}^s \right| 
+ \left| N_{(j,i)}^s \setminus N_{a(j,i)}^s \right| \nonumber \\
& \leq  \left( 2j\eps^{-1} + i-1 \right) n^{\frac{3-s(1+\eps)}{3}}
+ \left| N_{(j,i)}^s \setminus N_{a(j,i)}^s \right|~ .
\end{align}
Now, for $s\in [3]$, if a vertex $v$ ends up being in
$N_{(j,i)}^s \setminus N_{a(j,i)}^s$
then, by definition, $v\in V\setminus B^{(j,i)} \subset V\setminus B^{a(j,i)}$ and $d_G\left( v, B^{a(j,i)} \right) = t$
for some $t<s$. But this means that $v\in N_{a(j,i)}^t$
and, in order to be added to $N_{(j,i)}^s$,
the vertex $v$ needs to get at least $s-t$ edges
towards $B_i^{x_j}$ (which get exposed only after $B_i^{x_j}$ has been determined, since $v\in V\setminus B^{(j,i)}$). We conclude that the (conditional) expected size of 
$\left| N_{(j,i)}^s \setminus N_{a(j,i)}^s \right|$ is at most
\begin{align*}
\sum_{t<s} \left| N_{a(j,i)}^t \right| \cdot 
	\left( \left| B_i^{x_j} \right|\cdot p \right)^{s-t}
	& \leq
\left( 2j\eps^{-1} + i-1 \right) 
\sum_{t<s} n^{\frac{3-t(1+\eps)}{3}}
\left( n^{\frac{-1-(i+3)\eps}{3}} \right)^{s-t} \\
	& =
\left( 2j\eps^{-1} + i-1 \right) 
\sum_{t<s} n^{\frac{3-s(1+\eps) - (i+2)(s-t)\eps}{3}}\\
	& \leq
\left( 2j\eps^{-1} + i-1 \right) 
\sum_{t<s} n^{\frac{3-s(1+\eps) - 3\eps}{3}}
	 < n^{\frac{3-s(1+\eps) - 2.5\eps }{3}},
\end{align*}
where the first inequality uses \ref{prop:sizeBji} and \ref{prop:neighbours}, and the last inequality uses
that $i\leq \tilde{r}_j\leq 2\varepsilon^{-1}$.
Thus, with Markov's inequality (Lemma~\ref{lem:markov}) and union bound, we obtain
\begin{align*}
\Prob\left( \exists s\in[3]:~ 
\left| N_{(j,i)}^s \setminus N_{a(j,i)}^s \right|
> n^{\frac{3-s(1+\eps)-\eps}{3}} \right) < n^{-\frac{\eps}{2}} ~ .
\end{align*}
Combining this with (\ref{neighbour_step1}),
we see that with (conditional) probabilitiy at least
$1-n^{-\eps/2}$ we have
$$
\left| N_{(j,i)}^s \right|
\leq  \left( 2j\eps^{-1} + i-1 \right) n^{\frac{3-s(1+\eps)}{3}}
+ n^{\frac{3-s(1+\eps)-\eps}{3}} 
< \left( 2j\eps^{-1} + i \right) n^{\frac{3-s(1+\eps)}{3}}
$$
for every $s\in [3]$, and thus
\begin{equation*}
\Prob\left(\text{\ref{prop:neighbours} fails for $(j,i)$} \big| E_{a(j,i)} \right)
 < n^{-\frac{4\eps}{3}} + n^{-\frac{\eps}{2}} < n^{-\frac{\eps}{3}}~ . 
\end{equation*}

This finishes the proof of Lemma~\ref{lem:tec}.
\end{proof}

\section{Good structures for Connector}
\label{sec:structures}

\subsection{Technical Lemma}

Throughout Section~\ref{sec:structures}, we will consider the sequence $(\alpha_i)_{i\in\mathbb{N}}$ given by 
\begin{equation}\label{def:alpha}
\alpha_i:=3\cdot 2^{i-1} - 2~ ,
\end{equation}
and note that $\alpha_1=1$ and
$\alpha_{i+1}=2(\alpha_i+1)$ hold.
In order to simplify our argument, in the next lemma we will consider $\eps$ to be a real number of the form
$\eps = 1/(9 \cdot 2^{k-2}-3)$
with $k\in\mathbb{N}$. Note that this yields
\begin{equation}\label{alpha}
\alpha_k \eps = \frac{2}{3}~~ \text{and} ~~
\alpha_{k-1} \eps = \frac{1}{3} - \eps~ .
\end{equation}
Given $G\sim G_{n,p}$ and $x\in V(G)$, with high probability
the following lemma  will provide us with a suitable subgraph $H$
of $G$ which later (see e.g. Claim~\ref{cl:Tv}) 
will turn out to contain a bunch of 
copies of $\Tk$ that help to prove
Lemma~\ref{lem:tec1_connector} and Lemma~\ref{lem:tec2_connector}.

In order to simplify notation, we set 
$$\Ik=\left\{ (i,j,\ell):~ 1 \leq i \leq k, 1 \leq j \leq 2^{k-i}, 1 \leq \ell \leq 4 \right\}.$$
Starting with vertex disjoint sets $V_{(i,j,\ell)}\subset V(G)$
for $(i,j,\ell)\in \Ik$
we will iteratively find well-behaving subsets 
$M_{(i,j,\ell)}$ of those
which later turn out to be good candidate sets for embedding
the vertices of $\mathcal{T}_k$, even when some edges
of $G$ are not allowed to be used. Hereby, the tuple $(i,j)$ will represent the position of a vertex in the desired tree, while the component $\ell$ is used in order to apply our argument on disjoint subsets of $V(G)$ labeled with distinct indices $\ell$,
so that we will be able to find a few edge-disjoint copies of $\Tk$.  

\begin{lemma}[Decomposition of $G_{n,p}$]\label{lem:Decomp}
Let $k\geq 3$ be an integer, let
$\eps =(9 \cdot 2^{k-2}-3)^{-1}$
and let $n\in\mathbb{N}$ be large enough.
Let $G\sim G_{n,p}$ with $p=n^{- 2/3 +\eps}$.
Fix $x \in V(G)$
and~let
$$
V(G) = \{x\} \cup \bigcup\limits_{(i,j,\ell) \in \Ik} V_{(i,j,\ell)} \cup R
$$
be any partition of $V(G)$ such that 
$|V_{(i,j,\ell)}|=n/(2^{k+4})$ holds. 
Then with probability at least $1-\exp(-\ln^{1.5}n)$ there exist sets $M_{(i,j,\ell)}\subset V(G)$ and a subgraph $H \subset G$ with 
$$V(H)=\bigcup\limits_{\substack{(i,j,\ell) \in \Ik}} M_{(i,j,\ell)} \cup \{x\}$$ 
such that the following is true for every $(i,j,\ell) \in \Ik$:
\begin{enumerate}[label=\itmarab{D}]
  \item\label{decomp:inclusion} $M_{(i,j,\ell)} \subset V_{(i,j,\ell)}$
  \item\label{decomp:size} 
  $|M_{(i,j,\ell)}|=n^{1/3+\alpha_i \eps} \ln^{- 2\alpha_i} n$
  \item\label{decomp:cherries} if $i\geq 2$ then $\forall v \in M_{(i,j,\ell)}: d_H\left(v,M_{(i-1,2j-1,\ell)}\right) \geq 1$ and $d_H\left(v,M_{(i-1,2j,\ell)}\right) \geq 1$
  \item\label{decomp:degreebound} if $i\geq 2$ then $\forall v \in M_{(i-1,2j-1,\ell)}\cup M_{(i-1,2j,\ell)}: d_H(v,M_{(i,j,\ell)}) \leq n^{(\alpha_{i}- \alpha_{i-1})\eps}$
	\item\label{decomp:firstneighbours} $\forall v \in M_{(1,j,\ell)}: x \in N_H(v)$
	\item\label{decomp:edges} $\forall e \in E(H) ~ \exists (i,j,\ell)\in \Ik: e\in E_G(M_{(i-1,2j-1,\ell)} \cup M_{(i-1,2j,\ell)},M_{(i,j,\ell)}) $
\end{enumerate}
where we define $M_{(0,j,\ell)}:=\{x\}$ for
every $j\in [2^k]$ and $\ell\in [4]$.
\end{lemma}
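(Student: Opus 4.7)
The strategy is to construct the sets $M_{(i,j,\ell)}$ inductively, level by level from $i=1$ up to $i=k$, and take as $H$ the union of all $G$-edges between ``consecutive'' layers $M_{(i,j,\ell)}$ and $M_{(i-1,2j-1,\ell)}\cup M_{(i-1,2j,\ell)}$. To keep the Chernoff estimates independent, we expose edges in batches compatible with this stratification: at level $i$ we expose all so-far-unexposed $G$-edges between $\bigcup_{j,\ell}V_{(i,j,\ell)}$ and $\{x\}\cup\bigcup_{i'<i,j',\ell'}V_{(i',j',\ell')}$; since the $V_{(i,j,\ell)}$ are pairwise disjoint, different levels use disjoint edge-sets.

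At level $i=1$, simply set $C_{(1,j,\ell)}:=N_G(x)\cap V_{(1,j,\ell)}$. Since $|C_{(1,j,\ell)}|\sim\mathrm{Bin}(|V_{(1,j,\ell)}|,p)$ with mean $n^{1/3+\eps}/2^{k+4}$, Lemma~\ref{lem:Chernoff1} gives $|C_{(1,j,\ell)}|\geq n^{1/3+\eps}/\ln^{2}n=n^{1/3+\alpha_1\eps}\ln^{-2\alpha_1}n$ with error $\exp(-\Omega(n^{1/3+\eps}))$. Pick $M_{(1,j,\ell)}\subset C_{(1,j,\ell)}$ of exactly that size; properties \ref{decomp:inclusion}, \ref{decomp:size}, \ref{decomp:firstneighbours} are immediate.

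At level $i\geq 2$, assuming $M_{(i-1,\cdot,\cdot)}$ already chosen, define $C_{(i,j,\ell)}:=\{v\in V_{(i,j,\ell)}:d_G(v,M_{(i-1,2j-1,\ell)})\geq 1\text{ and }d_G(v,M_{(i-1,2j,\ell)})\geq 1\}$. Two concentration claims drive the induction. First, the indicator $\mathbf{1}[v\in C_{(i,j,\ell)}]$ is a product of two independent events (using disjoint edge-sets towards the two siblings), so $\Exp|C_{(i,j,\ell)}|=\Theta\bigl(|V_{(i,j,\ell)}|\cdot(|M_{(i-1,\cdot,\ell)}|p)^2\bigr)=\Theta(n^{1/3+\alpha_i\eps}/(2^{k+4}\ln^{4\alpha_{i-1}}n))$ using $2(\alpha_{i-1}+1)=\alpha_i$, and the identity $2\alpha_i=4\alpha_{i-1}+4$ gives a factor $\Theta(\ln^4 n/2^{k+4})$ of slack over the target size $n^{1/3+\alpha_i\eps}\ln^{-2\alpha_i}n$; Lemma~\ref{lem:Chernoff1} then yields a matching lower bound with polynomial-in-$n$ error probability. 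Second, for fixed $u\in M_{(i-1,2j-1,\ell)}$ the count $d_G(u,C_{(i,j,\ell)})$ equals the number of $v\in V_{(i,j,\ell)}$ with $uv\in G$ and $v$ having a neighbor in $M_{(i-1,2j,\ell)}$, which is $\mathrm{Bin}$-distributed with mean $\mu=\Theta(n^{(\alpha_i-\alpha_{i-1})\eps}/(2^{k+4}\ln^{2\alpha_{i-1}}n))$; setting $D:=n^{(\alpha_i-\alpha_{i-1})\eps}$ we have $D\geq 7\mu$ for $n$ large, so Lemma~\ref{lem:Chernoff2} gives $\Prob(d_G(u,C_{(i,j,\ell)})\geq D)\leq e^{-D}\leq \exp(-n^{3\eps})$. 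Union-bounding over all $(i,j,\ell)\in\Ik$ (a bounded number of levels, at most $n$ choices of $u$) leaves total failure at most $\exp(-\ln^{1.5}n)$, as required.

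Now pick any $M_{(i,j,\ell)}\subset C_{(i,j,\ell)}$ of size $n^{1/3+\alpha_i\eps}\ln^{-2\alpha_i}n$ and put into $E(H)$ all $G$-edges between $M_{(i,j,\ell)}$ and $M_{(i-1,2j-1,\ell)}\cup M_{(i-1,2j,\ell)}$. Property~\ref{decomp:cherries} follows from the definition of $C_{(i,j,\ell)}$; \ref{decomp:degreebound} follows from the second claim via $d_H(u,M_{(i,j,\ell)})\leq d_G(u,C_{(i,j,\ell)})\leq D$; \ref{decomp:edges} holds by construction of $H$. The main obstacle is the bookkeeping: the polylog exponents in \ref{decomp:size} must be chosen so that (i) the expected cherry count exceeds the target $|M|$ by a polylog factor ($\ln^{4}n$), and (ii) when the geometric growth $\alpha_{i+1}=2(\alpha_i+1)$ pushes $\alpha_i\eps$ up to exactly $2/3$ at $i=k$, the degree bound in \ref{decomp:degreebound} remains below trivial. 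The specific choice $\eps=(9\cdot 2^{k-2}-3)^{-1}$ together with the recursion $2\alpha_i-4\alpha_{i-1}=4$ is precisely what makes both conditions hold at every level.
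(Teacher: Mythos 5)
Your construction is exactly the paper's: build the sets $M_{(i,j,\ell)}$ level by level as those vertices of $V_{(i,j,\ell)}$ having a neighbour in each of the two sibling sets of the previous level, trim to the exact target size, and let $H$ consist of the $G$-edges between consecutive layers --- this is precisely the paper's Algorithm for the good subgraph $H$, with the same exposure of edges in level-by-level batches. The only difference is in the concentration bookkeeping (you treat the candidate-set size and the degrees into it directly as binomials, while the paper routes through edge counts and maximum-degree bounds of the form $|\widetilde{M}|\geq e_G/\ln^{1.9}n$); both give the same estimates, so the argument is correct and essentially identical.
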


\begin{figure}[h]
\begin{center}
\includegraphics[scale=1]{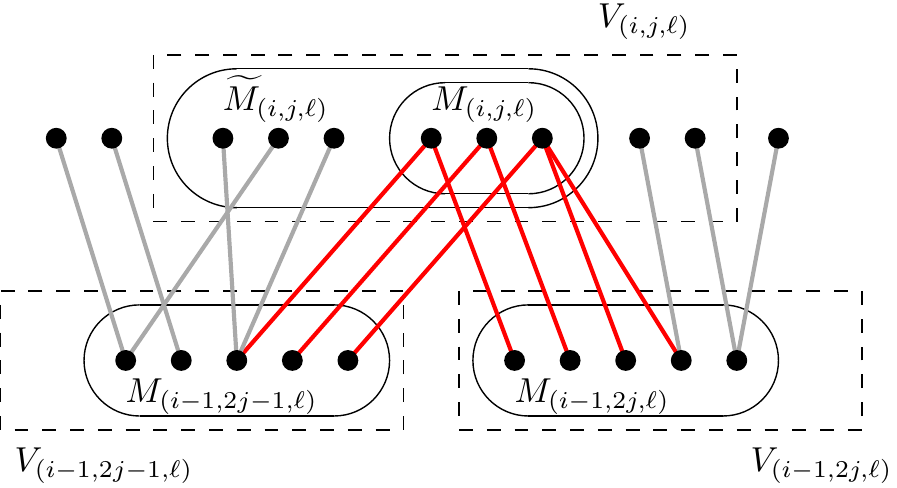}
\caption{Part of the structure of $H$, depicted in {\color{red} red}}
\label{fig:Mijl}
\end{center}
\end{figure}

\begin{proof}
Fix $x \in V(G)$
and~let
$$
V(G) = \{x\} \cup \bigcup\limits_{(i,j,\ell) \in \Ik} V_{(i,j,\ell)} \cup R
$$
be any partition of $V(G)$ such that 
$|V_{(i,j,\ell)}|=n/(2^{k+4})$. 
We will iteratively construct a subgraph $H$ together with sets $M_{(i,j,\ell)}\subset V_{(i,j,\ell)}$ through Algorithm~\ref{alg:good}
and we will prove that with probability at least $1-\exp(-\ln^{1.5}n)$
the algorithm succeeds in creating these in such a way that all the Properties~\ref{decomp:inclusion}--\ref{decomp:edges}
hold. Again, we will expose the edges of $G\sim G_{n,p}$ while the algorithm is running. That is, whenever a new set is going to be determined, we will only expose those edges 
which have not been exposed before and which are needed for the corresponding step in the algorithm.

\begin{algorithm}[ht]\label{alg:good}
    \caption{Good subgraph $H$ for vertex $x$}    
\SetKwInOut{Input}{Input}
\SetKwInOut{Output}{Output}
\Input{~ graph $G$, vertex $x\in V(G)$, partition of $V(G)$ as described in Lemma~\ref{lem:Decomp}}
\Output{~ subgraph $H$, sets $M_{(i,j,\ell)}$}
	~ $M_{(0,j,\ell)} := \{x\}$ for every $j\in [2^k]$ and $\ell\in [4]$\;
  ~ $V(H) := \{x\}$ and $E(H):=\varnothing$ \; 
    \For{$1 \leq i \leq k$}{
		\For{$1 \leq j \leq 2^{k-i}$}{
		\For{$1 \leq \ell \leq 4$}{
\quad\mbox{} $M_{(i,j,\ell)}:=\{v \in V_{(i,j,\ell)}:~  d_G(v,M_{(i-1,2j-1,\ell)}) \geq 1, ~
d_G(v,M_{(i-1,2j,\ell)}) \geq 1 \}$ \;
		\lIf{$|M_{(i,j,\ell)}| \geq 
		n^{1/3+\alpha_i \eps} \ln^{-2\alpha_i} n$\\ 
         \quad\mbox{}}{
        remove randomly selected vertices from $M_{(i,j,\ell)}$\\ \hspace{16.5mm} until $|M_{(i,j,\ell)}| = 
        n^{1/3+\alpha_i \eps}\ln^{-2\alpha_i} n$}
	\quad\mbox{} $V(H) \leftarrow V(H) \cup M_{(i,j,\ell)}$ \\
	\quad\mbox{} $E(H) \leftarrow E(H) \cup E_G(M_{(i-1,2j-1,\ell)}\cup M_{(i-1,2j,\ell)},M_{(i,j,\ell)})$ \\
         %\lIf{$B^x_{i}=\varnothing$\\ 
         %\quad\mbox{}}{
        %halt with output $H$}
				}}}
				{halt with output $H$ and sets $M_{(i,j,\ell)}$}
  \end{algorithm}

Following Algorithm~\ref{alg:good} it is obvious that
the Properties~\ref{decomp:inclusion}, 
\ref{decomp:cherries}, \ref{decomp:firstneighbours} and \ref{decomp:edges} hold.

Let $\E_t$ be the event that the Properties~\ref{decomp:size} and \ref{decomp:degreebound} hold for every $i\leq t$
(and every $j\leq 2^{k-i}$ and $\ell\in [4]$).
In the following we will show that
$$
\Prob(\overline{\E_1})
	\leq \exp\left( - n^{\frac{1}{3}} \right)
~~~ \text{and} ~~~
\Prob\left(\overline{\E_t} \Big| \E_{t-1} \right)
	\leq \exp\left( - 2\ln^{1.5} n \right)
$$
for every $2\leq t\leq k$. Observe that, once these two inequalities are proven, we can deduce that
$\Prob(\E_k)\geq {1 - k \exp\left( - 2\ln^{1.5} n \right)} 
	\geq 1- \exp\left( - \ln^{1.5} n \right)$,
from which Lemma~\ref{lem:Decomp} follows.

\medskip 

\underline{The event $\E_1$:} Property~\ref{decomp:degreebound}
holds trivially when $i=1$. For \ref{decomp:size}
observe that a standard Chernoff argument (apply Lemma~\ref{lem:Chernoff1} and union bound) yields that,
with probability at least $1-\exp(- n^{1/3})$,
for every $j\in [2^{k-1}]$ we have
$$|N_G(x,V_{(1,j,\ell)})| 
\geq \frac{1}{2} p|V_{(1,j,\ell)}|
\geq \frac{n^{\frac13+\eps}}{2^{k+5}} \geq 
 n^{\frac13+\eps} \ln^{-2} n~ .$$
Since $M_{(1,j,\ell)}$ is obtained from 
$N_G(x,V_{(1,j,\ell)})$ by reducing the latter to size 
$n^{1/3+\eps} \ln^{-2} n$, we  then obtain
that \ref{decomp:size} holds for $i=1$. Thus, $\Prob(\overline{\E_1})\leq 
\exp(- n^{1/3})$.

\medskip

\underline{The event $\E_t$:} Assume that $\E_{t-1}$ holds.
Observe that, before the sets $M_{(i,j,\ell)}$ get determined by Algorithm~\ref{alg:good},
none of the edges between the sets $V_{(i,j,\ell)}$, with $j\leq 2^{k-i}$,
and the sets
$M_{(i-1,j',\ell)}$, with $j'\leq 2^{k-i+1}$, 
have been revealed so far.
Now, conditioning on Property~\ref{decomp:size} for $i=t-1$, 
a Chernoff-type argument yields the following
with probability at least 
$1-\exp\left( - \ln^{1.8} n \right)$:
\begin{enumerate}
\item[(i)] 
$d_G(v,M_{(i-1,2j-1,\ell)})\leq \ln^{1.9} n$ and
$d_G(v,M_{(i-1,2j,\ell)})\leq \ln^{1.9} n$
for every $v\in V_{(i,j,\ell)}$ with $j\leq 2^{k-i+1}$,
\item[(ii)]
$e_G(M_{(i-1,2j-1,\ell)}, V_{(i,j,\ell)}) 
\geq 0.5 \cdot |M_{(i-1,2j-1,\ell)}|\cdot |V_{(i,j,\ell)}| \cdot p
$
for every $j\leq 2^{k-i+1}$~ .
\end{enumerate}
In fact, for (i) observe that 
$|M_{(i-1,2j-1,\ell)}|=|M_{(i-1,2j,\ell)}|
\stackrel{\tref{decomp:size}}{=}
n^{1/3+\alpha_{i-1}\eps}\stackrel{(\ref{alpha})}{\leq} p^{-1}$ which implies that the (conditional) expectation of the degrees in (i) is bounded by $1$. Thus, applying Lemma~\ref{lem:Chernoff2} ensures that the probability for one vertex $v$ failing to satisfy the degree conditions in (i) is bounded
by $2j\exp(-\ln^{1.9}n)$; a union bound completes the argument. Moreover, by Lemma~\ref{lem:Chernoff1} and a simple union bound, the property in (ii) fails with probability at most
$\exp\left(-n^{2/3}\right)$.

Now, let us condition on the properties in (i) and (ii) from now on.
Consider 
\begin{align*}
\widetilde{M}_{(i,j,\ell)}
& := \left\{v\in V_{(i,j,\ell)}:~
d_G(v,M_{(i-1,2j-1,\ell)})\geq 1 \right\} ~ ,\\
\widehat{M}_{(i,j,\ell)}
& := \left\{v\in \widetilde{M}_{(i,j,\ell)}:~
d_G(v,M_{(i-1,2j,\ell)})\geq 1 \right\} 
\end{align*}
and observe that, according to Algorithm~\ref{alg:good}, $M_{(i,j,\ell)}$
has size $n^{1/3+\alpha_i\eps}\ln^{-2\alpha_i} n$
if and only if 
$\widehat{M}_{(i,j,\ell)}$
is at least of that size.
In order to see that the latter is likely to hold, 
we first observe that
\begin{align*}
|\widetilde{M}_{(i,j,\ell)}|
\stackrel{\text{(i)}}{\geq}
\frac{e_G(M_{(i-1,2j-1,\ell)}, V_{(i,j,\ell)})}{\ln^{1.9} n}
\stackrel{\text{(ii)},\tref{decomp:size}}{\geq}
n^{\frac{2}{3}+(\alpha_{i-1}+1)\eps}\ln^{-2\alpha_{i-1}-2} n~ .
\end{align*}
Notice that when we determine the set 
$\widetilde{M}_{(i,j,\ell)}$ we don't need 
to reveal the edges
between $\widetilde{M}_{(i,j,\ell)}$ and 
$M_{(i-1,2j,\ell)}$. Thus, we can expose these edges afterwards, and by a Chernoff-type argument
(apply Lemma~\ref{lem:Chernoff1} and union bound)
we get, with probability at least $1-\exp\left(-n^{1/3}\right)$, that
$$e_G(\widetilde{M}_{(i,j,\ell)} , M_{(i-1,2j,\ell)}) 
\geq 0.5 \cdot |\widetilde{M}_{(i,j,\ell)}|
	\cdot |M_{(i-1,2j,\ell)}| \cdot p
$$
for every $j\leq 2^{k-i+1}$.  It then follows that
\begin{align*}
|\widehat{M}_{(i,j,\ell)}|
\stackrel{\text{(i)}}{\geq}
\frac{e_G(\widetilde{M}_{(i,j,\ell)} , M_{(i-1,2j,\ell)})}{\ln^{1.9} n}
\stackrel{\tref{decomp:size}}{\geq}
n^{\frac{1}{3}+(2\alpha_{i-1}+2)\eps}\ln^{-4\alpha_{i-1}-4} n
\stackrel{(\tref{def:alpha})}{=}
n^{\frac{1}{3}+\alpha_i\eps}\ln^{-2\alpha_i} n
\end{align*}
from which \ref{decomp:size} follows,
as was explained above.

As next, let us look at Property~\ref{decomp:degreebound}.
Fix $s\in \{0,1\}$ and consider the set 
$\widetilde{M}_{v,(i,j,\ell)}:=N_G(v,V_{(i,j,\ell)})$
for every $v\in M_{(i-1,2j-s,\ell)}$.
According to a Chernoff-type argument
(apply Lemma~\ref{lem:Chernoff1} and union bound), 
with probability at least
$1-\exp\left(- n^{1/3}\right)$, 
it holds that
$
|\widetilde{M}_{v,(i,j,\ell)})| 
	\leq n^{1/3 + \eps}
$ 
for every $v\in M_{(i-1,2j-s,\ell)}$.
Notice as before that, when we determine the set 
$\widetilde{M}_{v,(i,j,\ell)}$ we don't need 
to reveal the edges
between $\widetilde{M}_{v,(i,j,\ell)}$ and 
$M_{(i-1,2j-(1-s),\ell)}$. Thus, we can expose these edges afterwards, and another Chernoff-type argument
(apply Lemma~\ref{lem:Chernoff1} and union bound)
yields that,
with probability at least 
$1-n\exp\left( - n^{(\alpha_{i-1}+2)\eps}\right) 
\geq 1 - n^{-\eps}$,
it holds that 
$$e_G\left(\widetilde{M}_{v,(i,j,\ell)},M_{(i-1,2j - (1-s),\ell)}\right) 
\leq 0.5p|\widetilde{M}_{v,(i,j,\ell)}|\cdot |M_{(i-1,2j - (1-s),\ell)}|
\stackrel{\tref{decomp:size}}{\leq} n^{(\alpha_{i-1}+2)\eps} 
	\stackrel{(\tref{def:alpha})}{=} 
	n^{(\alpha_i - \alpha_{i-1})\eps }$$
for every $v\in M_{(i-1,2j-s,\ell)}$. 
Finally, notice that a vertex $w\in M_{(i,j,\ell)}\subset V_{(i,j,\ell)}$ can only become a neighbour of $v$ in $H$ 
if $w\in \widetilde{M}_{v,(i,j,\ell)}$ and 
$E_G(w,M_{(i-1,2j-(1-s),\ell)})\neq \emptyset$, 
where the first condition
comes from 
the definition of $\widetilde{M}_{v,(i,j,\ell)}$,
and the second is a consequence of
the construction of $M_{(i,j,\ell)}$
in the Algorithm~\ref{alg:good}.
But this immediately implies
$$
d_H(v,M_{(i,j,\ell)}) 
	\leq  e_G\left(\widetilde{M}_{v,(i,j,\ell)} , 
		M_{(i-1,2j - (1-s),\ell)}\right) 
	\leq n^{(\alpha_i - \alpha_{i-1})\eps } ~ ,
$$ 
as is required for Property~\ref{decomp:degreebound}.

Hence, summing up all the failure probabilities that occured in our argument, we see that 
\begin{align*}
\Prob\left(\overline{\E_t} \Big| \E_{t-1} \right)
& \leq \exp\left(-\ln^{1.8}n \right) 
	+ \exp\left(-n^{\frac{1}{3}} \right)
	+ 2\left( 
		\exp\left(-n^{\frac{1}{3}} \right)
		+ \exp\left(-n^{-\eps} \right) \right) \\
& < \exp\left( - 2\ln^{1.5} n \right)~ .
\end{align*}
This finishes the proof of Lemma~\ref{lem:Decomp}.
\end{proof}

Having Lemma~\ref{lem:Decomp} in our hands, we are now able
to prove Lemma~\ref{lem:tec1_connector} and Lemma~\ref{lem:tec2_connector} in the following.
For short, let us write
\begin{equation}\label{def:Ml}
M_{\ell} :=\bigcup\limits_{i,j:~ (i,j,\ell) \in \Ik} M_{(i,j,\ell)}
\end{equation}
for every $\ell\in [4]$,
and for every $i\in [k]$ set  
\begin{equation}\label{def:Li}
L_i :=\bigcup\limits_{j,\ell:~ (i,j,\ell) \in \Ik} M_{(i,j,\ell)}
	~~ \text{and} ~~ L_0:=\{x\}~ .
\end{equation}

\medskip

\subsection{Finding good structures -- Part I}

\begin{proof}[Proof of Lemma~\ref{lem:tec1_connector}]
Let $\delta>0$ be given. Then fix $k\in\mathbb{N}$ such that
$$
\delta\geq \eps := \frac{1}{9\cdot 2^{k-2}-3}
$$
holds, and let $k_1=k+1$. 
We will prove the lemma for
$
p=n^{-2/3+\eps}
$
and notice that by the monotonicity
the lemma then follows for $p=n^{-2/3+\delta}$ as well.
As before, we set
$$\Ik=\left\{(i,j,\ell)\in \mathbb{N}^3:~ 
1\leq i\leq k,~ 1\leq j\leq 2^{k-i},~ 1\leq \ell\leq 4
\right\}~ .$$
Let $V=[n]$ be the vertex set, and let $x,r\in V$ be fixed.
Before exposing all the edges of $G\sim G_{n,p}$
we fix a partition 
$$
[n]=\{x\}\cup \bigcup_{(i,j,\ell)\in \Ik} V_{(i,j,\ell)} \cup R~ 
$$
of the vertex set such that
$r\in R$ and $|V_{(i,j,\ell)}|=n/(2^{k+4})$
for every $(i,j,\ell)\in \Ik$. 
We  will show that with probability
at least $1-n^{-3}$ 
a random graph $G\sim G_{n,p}$ 
is such that for every subgraph 
$B$ with $e(B)\leq n^{1/3}\ln n$ 
the graph $G\setminus B$  contains a copy $T$ 
of $\mathcal{T}_{k_1}$ as desired, 
additionally satisfying that 
$V(T)\subset ([n]\setminus R)\cup \{r\}$.
Taking a union bound over all choices of $r$ and $x$, 
it then follows that the property described in 
Lemma~\ref{lem:tec1_connector} holds with probability
at least
$
1-n^{-1}~ .
$ 

In order to do so, we first expose the edges of $G$ on $[n]\setminus R$.
By Lemma~\ref{lem:Decomp} we know that with probability at least $1-\exp(-\ln^{1.5} n)$ there exist subsets 
$M_{(i,j,\ell)}\subset V_{(i,j,\ell)} $ 
and a subgraph $H\subset G$ on the vertex set 
$\bigcup_{(i,j,\ell)\in \Ik} M_{(i,j,\ell)}\cup \{x\}$
such that all the Properties~\ref{decomp:inclusion}--\ref{decomp:edges} hold.
Let $\E_H$ be the event that such a graph $H$ with vertex sets $M_{(i,j,\ell)}$ exists.
From now on, we will condition on $\E_H$ to hold. 
Recall the definition of $M_{\ell}$ in (\ref{def:Ml}) and $L_i$ in (\ref{def:Li}).
We first observe that there must be many copies of 
$\Tk$ in $H$ with all leaves being adjacent to $x$ in $H$.

\begin{clm}\label{cl:Tv}
For every $\ell\in [4]$ and every
$v\in M_{(k,1,\ell)}$ there exists a tree $T_v \cong \Tk$
such that 
\begin{enumerate}[label=\itmarab{T}]
\item\label{tree:root} $v$ is the root of $T_v$, 
\item\label{tree:VE} $V(T_v)\subset M_{\ell}$ and $E(T_v)\subset E(H)$,
\item\label{tree:leaves} all the leaves of $T_v$ are adjacent to $x$ in $H$,
\item\label{tree:children} for all vertices in $V(T_v)\cap L_i$ and $i\geq 2$, their children in $T_v$ belong to $L_{i-1}$.
\end{enumerate}
\end{clm}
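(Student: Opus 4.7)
The plan is a direct greedy, top-down construction of $T_v$ from the structural properties of $H$ furnished by Lemma~\ref{lem:Decomp}. Fix $\ell\in [4]$ and $v\in M_{(k,1,\ell)}$, declare $v$ to be the root of $T_v$ (this is the unique vertex at depth~$1$, and it sits in $L_k$), and proceed by induction on the depth. Assume I have constructed the first $d$ levels of $T_v$, for some $1\leq d\leq k-1$, in such a way that each of the $2^{d-1}$ vertices at depth $d$ lies in a set $M_{(k-d+1,j,\ell)}$ with $j\in [2^{d-1}]$, exactly one vertex per such set. Since $k-d+1\geq 2$, Property~\ref{decomp:cherries} applied to each such vertex $u\in M_{(k-d+1,j,\ell)}$ provides at least one $H$-neighbour in $M_{(k-d,2j-1,\ell)}$ and at least one in $M_{(k-d,2j,\ell)}$; pick one of each arbitrarily and declare them the two children of $u$ in $T_v$. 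This extends $T_v$ by one level, with the new $2^d$ vertices distributed exactly one per set $M_{(k-d,j',\ell)}$, $j'\in [2^d]$, maintaining the inductive hypothesis.

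Iterating the step until depth $k$ is reached, the $2^{k-1}$ leaves of $T_v$ lie in $L_1=\bigcup_{j'}M_{(1,j',\ell)}$. Property~\ref{decomp:firstneighbours} then gives that every such leaf is adjacent to $x$ in $H$, which is \ref{tree:leaves}. Clause \ref{tree:root} is by construction, \ref{tree:children} is exactly what the inductive step records, and \ref{tree:VE} follows since every vertex I select lies in some $M_{(i,j,\ell)}\subset M_\ell$ while every edge I take has been added to $E(H)$ by Algorithm~\ref{alg:good}.

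The one thing to check carefully is that $T_v$ is actually a tree, i.e.\ that no vertex is reused. Vertices at a common depth $d$ lie in the sets $M_{(k-d+1,j,\ell)}$ with a common first index and pairwise distinct second indices, hence in pairwise disjoint $V_{(k-d+1,j,\ell)}$'s; vertices at different depths lie in sets whose first index $i$ differs, hence again in disjoint $V_{(i,\cdot,\ell)}$'s. Thus by the disjointness of the partition fixed in Lemma~\ref{lem:Decomp} all $2^k-1$ selected vertices are distinct, and the resulting subgraph of $H$ is a copy of $\mathcal{T}_k$. There is really no technical obstacle here: the claim is an unrolling of the branching guarantee \ref{decomp:cherries} down the levels of the decomposition, with \ref{decomp:firstneighbours} closing the last step at the leaves.
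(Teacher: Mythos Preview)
Your proof is correct and follows essentially the same approach as the paper: both argue by iterating Property~\ref{decomp:cherries} top-down from $v\in M_{(k,1,\ell)}$ so that the vertex at position $(i,j)$ lands in $M_{(i,j,\ell)}$, and then invoke Property~\ref{decomp:firstneighbours} for the leaves. You are in fact more explicit than the paper in verifying that the selected vertices are pairwise distinct (via the disjointness of the $V_{(i,j,\ell)}$), which the paper leaves implicit in its labelling argument.
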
  

\textit{Proof.}
Label the vertices of $\Tk$ in such a way that
the root gets label $(k,1)$ and for every vertex
with label $(i,j)$ and $i>1$ its two children
get the labels 
$(i-1,2j-1)$ and $(i-1,2j)$, respectively.
 
Let $\ell\in [4]$ and $v\in M_{(k,1,\ell)}$ be given.
Applying Property~\ref{decomp:cherries} iteratively 
we find an embedding of $\Tk$ into $H$,
such that the root of $\Tk$ is mapped to $v$,
and such that the vertex with label $(i,j)$ in $\Tk$
is mapped to a vertex in $M_{(i,j,\ell)}\subset M_{\ell} \cap L_i$.
Let $T_v$ denote the resulting copy of $\Tk$ in $H$.
Also every leaf of $\Tk$ has some label $(1,j)$ with $1\leq j\leq 2^{k-1}$ and thus 
the corresponding leaf in $T_v$ is contained in $M_{(1,j,\ell)}$. 
By Property~\ref{decomp:firstneighbours}
it follows that the latter is adjacent to $x$ in $H$. 
Thus the claim follows. \done

\medskip

From now on, for every $v\in L_k = \bigcup_{\ell \in [4]}M_{(k,1,\ell)} $ fix a tree $T_v$ as described above. 
Since, for the property that we are aiming for,
we need to have control on how many such trees become useless when some edge is removed from $H$,
we define 
\begin{equation}\label{def:Se1}
S_e := \left\{
v\in L_k: ~ 
e\in E(T_v)\cup \{xw:~ w~ \text{is a leaf of }T_v \}
\right\}
\end{equation}
for every edge $e\in E(H)$.
Under assumption of the event $\E_H$, we next deduce that $S_e$ does not get too large.

\begin{clm}\label{cl:Se}
Let $\E_H$ hold. Then $|S_e|\leq n^{2/3-\eps}$
for every $e\in E(H)$. 
\end{clm}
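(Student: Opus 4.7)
The plan is to reduce bounding $|S_e|$ to counting, for a fixed vertex $w$ at some level $i$ of the decomposition, the number of roots $v \in M_{(k,1,\ell)}$ whose tree $T_v$ contains $w$. That count will be controlled by iterating the degree bound \ref{decomp:degreebound} along the unique upward path from $w$ to $v$ inside $T_v$, producing a telescoping product.

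First I would unify the two summands in the definition (\ref{def:Se1}). By \ref{decomp:edges}, every $e \in E(H)$ lies between $M_{(i-1,2j-1,\ell)} \cup M_{(i-1,2j,\ell)}$ and $M_{(i,j,\ell)}$ for a unique $(i,j,\ell) \in \Ik$; call $i$ the \emph{level} of $e$ and let $w$ denote its endpoint in $M_{(i,j,\ell)}$. The edges $xw'$ with $w'$ a leaf of $T_v$ are exactly the level-$1$ edges of $H$ incident to $x$, so every relevant edge fits this template. Since $x \notin V(T_v)$, membership $v \in S_e$ in either summand forces $w \in V(T_v)$, and moreover $v \in M_{(k,1,\ell)}$ for the same $\ell$ as $w$, because the sets $M_\ell$ from (\ref{def:Ml}) are vertex-disjoint and $V(T_v) \subset M_\ell$ by \ref{tree:VE}.

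Next I would count those $v \in M_{(k,1,\ell)}$ with $w \in V(T_v)$ for $w \in M_{(i,j,\ell)}$ fixed. By \ref{tree:children} and the labeling used in the proof of Claim~\ref{cl:Tv}, any such $v$ gives an upward path $w = w_i, w_{i+1}, \ldots, w_k = v$ in $H$ with $w_s \in M_{(s, \lceil j/2^{s-i}\rceil, \ell)}$; distinct $v$'s clearly yield distinct paths (the endpoint $w_k$ equals $v$). Applying \ref{decomp:degreebound} at each step (with $s+1$ playing the role of $i$), the number of choices for $w_{s+1}$ given $w_s$ is at most $n^{(\alpha_{s+1}-\alpha_s)\eps}$, so telescoping gives at most $\prod_{s=i}^{k-1} n^{(\alpha_{s+1}-\alpha_s)\eps} = n^{(\alpha_k - \alpha_i)\eps}$ such paths, hence that many such $v$'s.

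Finally, the worst case is $i=1$: by (\ref{alpha}) one has $(\alpha_k - \alpha_1)\eps = 2/3 - \eps$ (since $\alpha_1 = 1$ and $\alpha_k \eps = 2/3$), giving $|S_e| \leq n^{2/3 - \eps}$; for $i \geq 2$ the exponent is strictly smaller. I expect the main obstacle to be largely bookkeeping: matching the abstract tree labels $(i',j')$ of $\Tk$ to the indices of the sets $M_{(i',j',\ell)}$ along the upward path (using $\lceil j_s/2\rceil = j_{s+1}$ so that \ref{decomp:degreebound} applies cleanly at each step), but this is forced by how $T_v$ was embedded in the proof of Claim~\ref{cl:Tv}.
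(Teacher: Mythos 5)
Your proposal is correct and follows essentially the same route as the paper: identify the higher-level endpoint $w\in M_{(i,j,\ell)}$ of $e$ via \ref{decomp:edges}, note that $v\in S_e$ forces a $w$-to-$v$ path climbing the levels $L_i,\dots,L_k$ by \ref{tree:children}, and bound the number of such paths by the telescoping product $\prod_{s=i}^{k-1} n^{(\alpha_{s+1}-\alpha_s)\eps}=n^{(\alpha_k-\alpha_i)\eps}\le n^{2/3-\eps}$ using \ref{decomp:degreebound} and $\alpha_i\ge\alpha_1=1$. Your extra bookkeeping (unifying the leaf-edge case as a level-$1$ edge and tracking the indices $\lceil j/2^{s-i}\rceil$) is just a more explicit version of what the paper does.
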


\textit{Proof.}
Let $e\in E(H)$, then
$e$ is incident to at least one vertex $y\in M_{(i,j,\ell)}$
for some $(i,j,\ell)\in \Ik$, because of Property~\ref{decomp:edges}.
For every vertex $v\in S_e$ there must exist a path $P$ in $T_v$ leading from $y$ to $v$. According to Property~\ref{tree:children}, this path $P$ needs to be
of the form
$
P=(y,v_{i+1},v_{i+2},\ldots,v_{k-1},v)
$ 
with $v_s\in L_s$ for every $i+1\leq s\leq k-1$. 
Following Property~\ref{decomp:degreebound} and Property~\ref{decomp:edges},
we have $d_H(v_s,L_{s+1})\leq n^{(\alpha_{s+1}-\alpha_s)\eps}$
for every $i+1\leq s\leq k-1$ and
$d_H(y,L_{i+1})\leq n^{(\alpha_{i+1}-\alpha_i)\eps}$.
Thus, the number of all possible such $y$-$v$-paths $P$ that
belong to some $T_v$ with $v\in L_k$ is bounded from above
by
$$
\prod_{s=i}^{k-1}
n^{(\alpha_{s+1}-\alpha_s)\eps}
=
n^{(\alpha_{k}-\alpha_i)\eps} \leq n^{\frac23-\eps},
$$
where the last inequality holds by (\ref{alpha}) and since $\alpha_i\geq \alpha_1=1$. This proves the claim. \done

\medskip

We next expose the edges incident to
$r\in R$. By a standard Chernoff argument (apply Lemma~\ref{lem:Chernoff2} and union bound), we conclude that with probability at least $1-\exp(-0.5\ln^2 n)$ it holds that
\begin{equation}\label{eq:deg_Se}
d_G(r,S_e) < \ln^2 n
\end{equation}
for every $e\in E(H)$, and 
\begin{equation}\label{eq:deg_M}
d_G\left(r, M_{(k,1,\ell)} \right) > \frac{1}{2}p |M_{(k,1,\ell)}| 
\stackrel{\tref{decomp:size} }{=} 
\frac{n^{-\frac{1}{3} + (\alpha_k+1) \eps}}{2 \ln^{2\alpha_k} n}
\stackrel{(\ref{alpha})}{=}
\frac{n^{\frac{1}{3} + \eps}}{2 \ln^{2\alpha_k} n}
\end{equation}

for every $\ell\in [4]$.
Conditioning on~(\ref{eq:deg_Se}) and (\ref{eq:deg_M}) as well as the event $\E_H$,
which together hold with probability at least 
$1-\exp(-0.5\ln^{1.5} n)\geq 1-n^{-3}$, it remains to prove that
for every subgraph 
$B$ with $e(B)\leq n^{1/3}\ln n$ 
the graph $H\setminus B \subset G\setminus B$  
contains a copy $T$ 
of $\mathcal{T}_{k_1}$ as was described at the beginning
of the proof.

So, let $B$ of size 
$e(B)\leq n^{1/3}\ln n$ be given. 
Set $S_B:= \bigcup_{e\in B}S_e$
and observe that
$$
d_{G\setminus B}\left(r, S_B \right)
<d_{G}\left(r, S_B \right)
\stackrel{(\tref{eq:deg_Se})}{<}
|B|\cdot \ln^2 n 
< n^{\frac13}\ln^3 n~ .
$$
For every $\ell \in [4]$ we thus obtain
\begin{align*}
d_{G\setminus B} \left( r, M_{(k,1,\ell)} \right) 
 \geq  
d_{G} \left( r, M_{(k,1,\ell)} \right) - e(B) 
	\stackrel{(\ref{eq:deg_M})}{>} 
  \frac{n^{\frac13+\eps}}{2\ln^{2\alpha_k} n} -  
  n^{\frac13}\ln n
 > d_{G\setminus B}\left(r, S_B \right)~ ,
\end{align*}
provided $n$ is large enough.
Hence, for every $\ell \in [4]$, we find a vertex 
$x_{\ell} \in N_{G\setminus B}(r)$ with
$x_{\ell} \in M_{(k,1,\ell)} \setminus S_B$.
By the choice of $T_{x_{\ell}}$ (Claim~\ref{cl:Tv}), 
the tree $T_{x_{\ell}}$ 
is a copy of $\Tk$ in $H$ with $x_{\ell}$ being its root,
such that every leaf of $T_{x_{\ell}}$ 
is adjacent to $x$ in $H\subset G$ and such that
$V(T_{x_{\ell}})\subset M_{\ell}$.
Moreover, by the definition of $S_e$ in (\ref{def:Se1}), and since
$x_{\ell}\notin S_B$, we find that
$$E(T_{x_{\ell}})\cup \left\{xw:~ w\text{ is a leaf of $T_{x_{\ell}}$}  \right\}\subset E(H)\setminus E(B) ~ .$$ 
Now, set $T$ to be the union
of $T_{x_1}$, $T_{x_2}$ and $\{x_1r,x_2r\}$. Then, 
since $M_{1}\cap M_2 = \varnothing$, 
we get that
$T\subset G\setminus B$ is a copy of $\Tkplus = \mathcal{T}_{k_1}$ with all its leaves being adjacent to $x$ in $G\setminus B$. Moreover, $r$ is the root of $T$,
and we have $x\notin V(T)$, since $x\notin M_{\ell} \supset V(T_{x_{\ell}})$.
\end{proof}

\medskip

\subsection{Finding good structures -- Part II}

\begin{proof}[Proof of Lemma~\ref{lem:tec2_connector}]
Let $\delta>0$ be given. Fix $k\in\mathbb{N}$ such that
$\delta\geq \eps := 1/(9\cdot 2^{k-2}-3)$
holds, and let $k_2=k$. Again, it is enough to prove the lemma for
$p=n^{-2/3+\eps}$ and to use the monotonicity
of the desired property. Again,
we set
$$\Ik=\left\{(i,j,\ell)\in \mathbb{N}^3:~ 
1\leq i\leq k,~ 1\leq j\leq 2^{k-i},~ 1\leq \ell\leq 4
\right\}~ .$$

Let $V=[n]$ be the vertex set,  
let $A\subset V$ with $|A|=n^{1/3}$ be fixed,
and let $x\in V\setminus A$.
Before exposing the edges of $G\sim G_{n,p}$
let 
$$
[n]=\{x\}\cup \bigcup_{(i,j,\ell)\in \Ik} V_{(i,j,\ell)} \cup R~ 
$$
be any partition of the vertex set satisfying
$A\subset R$ and $|V_{(i,j,\ell)}|=n/(2^{k+4})$
for every $(i,j,\ell)\in \Ik$. We  will show that with probability
at least $1-n^{-2}$ a random graph $G\sim G_{n,p}$ is such that for every vertex set $M\subset V\setminus \{x\}$ and every subgraph $B$, with 
$e(B)\leq n^{2/3}\ln n$ 
and $d_B(v)\leq \ln^2 n$ for every $v\in V\setminus M$,
the graph $G\setminus B$ 
contains a vertex $z$ and four binary trees $T_{\ell}$ 
as described by the lemma, additionally satisfying that $z\in R\setminus A$ and $V(T_{\ell})\subset V_{\ell}:=\bigcup_{(i,j)} V_{(i,j,\ell)}$ for every $\ell\in [4]$.
Then, taking union bound over all choices of $x$, 
Lemma~\ref{lem:tec2_connector} follows immediately.

\medskip

In order to do so, we first expose the edges of $G$ on $[n]\setminus R$.
By Lemma~\ref{lem:Decomp} we know that with probability at least $1-\exp(-\ln^{1.5} n)$ there exists a subgraph $H\subset G$ with $V(H)\subset [n]\setminus R$ as well as subsets 
$M_{(i,j,\ell)}\subset V_{(i,j,\ell)} $ satisfying 
the Properties~\ref{decomp:inclusion}--\ref{decomp:edges}. 
Let $\E_H$ be the event that such a graph $H$ with vertex sets $M_{(i,j,\ell)}$ exists.
From now on, we will condition on $\E_H$ to hold. 
Following Claim~\ref{cl:Tv} we then know that
for every $\ell\in [4]$ and every vertex 
$v\in M_{(k,1,\ell)}$ there exists a tree $T_v \cong \Tk$ in $H$
such that $V(T_v)\subset V_{\ell}$
and such that all the leaves of $T_v$ are adjacent to $x$ in~$H$.

\medskip

Set $R'=R\setminus A$ and for every $Q\subset L_k = \bigcup_{\ell \in [4]} M_{(k,1,\ell)}$
let 
\begin{equation}\label{def:big}
Big_Q : =\left\{ u\in R':~ d_G(u,Q)> n^{\frac13+\frac{\eps}{2}} \right\}~ .
\end{equation}
Next expose all remaining edges of $G$. Under the assumption of $\E_H$
we then have the following

\begin{clm}\label{cl:Big}
Let $\E_H$ hold.
Then, with probability at least $1-\exp(-0.5\ln^2 n)$ the following holds:
\begin{enumerate}
\item[(a)] 
$\forall \ell\in [4] ~ \forall v\in R':
~ d_G(v,M_{(k,1,\ell)}) > n^{1/3+2\eps/3}
$,
\item[(b)]
$
|N_G(A)\cap R'| > n^{2/3+2\eps/3}
$,
\item[(c)]
for every $Q\subset L_k$ of size $n^{1-2\eps}$ we have
$
|Big_Q|\leq n^{2/3+\eps/2}
$.
\end{enumerate}
\end{clm}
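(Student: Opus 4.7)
The plan is to exploit the fact that after conditioning on $\E_H$, the construction of Lemma~\ref{lem:Decomp} has revealed only edges inside $V(H)\subset [n]\setminus R$; in particular, no edge with an endpoint in $R'\cup A$ has been exposed yet. Hence, when we now expose all remaining edges of $G$, the random variables appearing in (a)--(c)---the degrees from $R'$ into $M_{(k,1,\ell)}$, into $A$, or into an arbitrary subset $Q\subset L_k$---are sums of fresh independent Bernoullis with parameter $p$, and the Chernoff estimates of Lemma~\ref{lem:Chernoff1} and Lemma~\ref{lem:Chernoff2} apply directly.

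For (a), Property~\ref{decomp:size} together with the identity $\alpha_k\eps = 2/3$ gives $|M_{(k,1,\ell)}| = n\ln^{-2\alpha_k}n$, so $\Exp(d_G(v,M_{(k,1,\ell)})) = n^{1/3+\eps}\ln^{-2\alpha_k}n$, exceeding the target $n^{1/3+2\eps/3}$ by a factor of $n^{\eps/3}\ln^{-2\alpha_k}n\to\infty$. A lower-tail Chernoff bound yields failure probability at most $\exp(-n^{1/3+\eps/2})$ per pair $(v,\ell)$, easily absorbed by a union bound over the at most $4|R'|$ pairs. For (b), the events $\{v\in N_G(A)\}_{v\in R'}$ are mutually independent (they are determined by disjoint edge sets), and each has probability $1-(1-p)^{|A|}=(1-o(1))|A|p=\Theta(n^{-1/3+\eps})$; since $|R'|=\Theta(n)$, the expected size of $N_G(A)\cap R'$ is of order $n^{2/3+\eps}$, far above $n^{2/3+2\eps/3}$, and a further lower-tail Chernoff bound finishes the argument.

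The main obstacle is (c), because a pointwise union bound over all $Q\subset L_k$ of size $n^{1-2\eps}$ contributes a factor $\binom{|L_k|}{n^{1-2\eps}}\leq \exp(O(n^{1-2\eps}\ln n))$, which swamps any reasonable per-$Q$ tail estimate. The resolution is to reformulate the event as a bipartite edge-count statement. If (c) fails for some $Q$, we may select $U\subseteq Big_Q$ with $|U|=u:=\lceil n^{2/3+\eps/2}\rceil$ so that every vertex of $U$ sends more than $n^{1/3+\eps/2}$ edges to $Q$, forcing
$$
e_G(U,Q) > u\cdot n^{1/3+\eps/2} \geq n^{1+\eps}.
$$
Yet $\Exp(e_G(U,Q))=|U||Q|p=n^{1-\eps/2}$, so the observed count exceeds its mean by a factor $n^{3\eps/2}\geq 7$ for $n$ large, and Lemma~\ref{lem:Chernoff2} bounds the probability of such a deviation, for a fixed pair $(U,Q)$, by $\exp(-n^{1+\eps})$. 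Since $\binom{n}{u}\binom{|L_k|}{n^{1-2\eps}}\leq \exp(O(n^{1-2\eps}\ln n))$ and $n^{3\eps}\gg \ln n$ for large $n$, a union bound over all pairs $(U,Q)$ leaves a failure probability of order $\exp(-\Omega(n^{1+\eps}))$. Summing the three contributions yields a total conditional failure probability well below $\exp(-0.5\ln^2 n)$, as required.
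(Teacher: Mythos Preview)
Your proof is correct and follows essentially the same approach as the paper: for (a) a direct Chernoff bound with union bound, and for (c) the identical reformulation as a bipartite edge-count between $Q$ and a subset $U\subset Big_Q$ of size $n^{2/3+\eps/2}$, followed by a union bound over all such pairs. The only minor difference is in (b): you treat $|N_G(A)\cap R'|$ directly as a sum of independent indicators $\mathbb{1}\{v\in N_G(A)\}$ over $v\in R'$ and apply a single Chernoff bound, whereas the paper bounds $e_G(A,R')$ from below and $\max_{v\in R'} d_G(v,A)$ from above (both via Chernoff) and then divides---your route is slightly cleaner but yields the same conclusion.
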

 
\textit{Proof.}
For (a) notice that $d_G(v,M_{(k,1,\ell)})\sim Bin(|M_{(k,1,\ell)}|,p)$
and thus 
$$\Exp(d_G(v,M_{(k,1,\ell)})) = |M_{(k,1,\ell)}|p
	\stackrel{ \tref{decomp:size}  }{=}
	\frac{n^{-\frac{1}{3}+(\alpha_k+1)\eps}}{\ln^{2\alpha_k} n}
	\stackrel{(\ref{alpha})}{=} \frac{n^{\frac{1}{3}+\eps}}{\ln^{2\alpha_k} n}~ .$$
Applying Chernoff's inequality (Lemma~\ref{lem:Chernoff1}) and a union bound over all choice of 
$\ell\in [4]$ and $v\in R'$ we get
$\Prob(\text{(a) fails}) \leq 4n \exp(-n^{1/3})~ . $

For (b), observe first that $|R'|> \frac{n}{4}$. 
Applying Chernoff's inequality (Lemmas~\ref{lem:Chernoff1}
and~\ref{lem:Chernoff2}) and a union bound,
we see that with probability at least
$1-\exp(-0.9\ln^2 n)$ we have
$$
e_G(A,R')\geq \frac{1}{2}|A||R'|p > \frac{n^{\frac23+\eps}}{8}
~~~ 
\text{and}
~~~
d_G(v,A)<\ln^2 n ~~~ \text{for every }v\in R'~ .
$$
From these inequalities we can conclude that  
$$
|N_G(A)\cap R'| \geq \frac{e_G(A,R')}{\ln^2 n}  > n^{\frac23(1+\eps)}~ .
$$
Thus, $\Prob(\text{(b) fails}) \leq \exp(-0.9\ln^2 n)$.

For (c) we start by observing that according to a
Chernoff and union bound argument 
(applying Lemma~\ref{lem:Chernoff2})
the following property
holds with probability at least $1-\exp(-n)$:
for every subsets $X\subset L_k$ and $Y\subset R'$ of sizes
$|X|=n^{1-2\eps}$ and $|Y|=n^{2/3+\eps/2}$
we have $e_G(X,Y)< n^{1+\eps/2}$.
Given that property,
assume that (c) fails to hold. Then there
exist $Q\subset L_k$ of size $n^{1-2\eps}$
and a subset $Big_Q'\subset Big_Q$
with $|Big_Q'|=n^{2/3+\eps/2}$. 
We then have $e_G(Q,Big_Q')<n^{1+\eps/2}$
under assumption of the property mentioned above;
but also $e_G(Q,Big_Q') > n^{1/3+\eps/2} \cdot |Big_Q'| = n^{1+\eps}$
according to the definition of $Big_Q$ in (\ref{def:big}), a contradiction.
Thus, $\Prob(\text{(c) fails}) \leq \exp(-n)$.

Finally, summing up all the failure probabilities, that were obtained above, the claim follows.~\done

\medskip

Conditioning on the event $\E_H$
and on the properties described in Claim~\ref{cl:Big},
it remains to prove that
for every vertex set $M\subset V\setminus \{x\}$ and every subgraph $B$, with 
$e(B)\leq n^{2/3}\ln n$ 
and $d_B(v)\leq \ln^2 n$ for every $v\in V\setminus M$,
the graph $G\setminus B$ 
contains a vertex $z$ and four binary trees $T_{\ell}$ 
as described at the beginning of the proof.
So, let any such $M$ and $B$ be given. Similarly to the proof of 
Lemma~\ref{lem:tec1_connector} consider the set
\begin{align*}
S_e & = \left\{
v\in L_k: ~ 
e\in E(T_v)\cup \{xw:~ w~ \text{is a leaf of }T_v \}
\right\} 
\end{align*}
for every edge $e\in E(H)$, and let
$S_X:=\bigcup_{e\in X}S_e$ for every $X\subset V$. 
Further, let
\begin{align*}
B_i  :=\left\{
e\in B\cap H:~ e=ab ~ \text{with }a\in L_{i-1}\setminus M
~ \text{and } b\in L_i
\right\} ~~ \text{and} ~~
B^{\ast}  := \bigcup_{i\in [k]} B_i~ .
\end{align*}
Similarly to 
Claim~\ref{cl:Se}, we prove the following

\begin{clm}
Let $\E_H$ hold. Then
$|S_{B^{\ast}}|\leq n^{1-2\eps}$.
\end{clm}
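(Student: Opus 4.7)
The plan is to adapt the path-counting argument of Claim~\ref{cl:Se}, exploiting the extra information that every edge in $B^{\ast}$ has its ``lower'' endpoint in $V\setminus M$, where the Breaker-degree is bounded by hypothesis. I would start from the trivial union bound
$$
|S_{B^{\ast}}|\;\leq\;\sum_{i=1}^{k}\sum_{e\in B_i}|S_e|
$$
and then bound $|S_e|$ and $|B_i|$ separately in a level-dependent way.

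First, I would refine the estimate of Claim~\ref{cl:Se} by using not just that $e$ is incident to some vertex in $L_i$, but that $e=ab$ with $a\in L_{i-1}$ and $b\in L_i$. By Property~\ref{tree:children}, every $v\in S_e$ has $b$ appearing in $T_v$ with $a$ as its child (when $i\geq 2$), or $b$ as a leaf of $T_v$ (when $i=1$, so $a=x$). In either case the number of possible roots $v\in L_k$ is bounded by the number of paths $b=u_i,u_{i+1},\dots,u_k=v$ with $u_s\in L_s$ in $H$. Properties~\ref{decomp:edges} and~\ref{decomp:degreebound} confine such a path to a single component $M_\ell$ and bound the number of choices at each step by $n^{(\alpha_{s+1}-\alpha_s)\eps}$; iterating gives
$$
|S_e|\;\leq\;\prod_{s=i}^{k-1} n^{(\alpha_{s+1}-\alpha_s)\eps}\;=\;n^{(\alpha_k-\alpha_i)\eps}.
$$

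Next, I would bound $|B_i|$ using the Breaker-degree hypothesis. For $i=1$ we have $L_0=\{x\}$ and $x\notin M$ (since $M\subset V\setminus\{x\}$), so $|B_1|\leq d_B(x)\leq \ln^2 n$. For $i\geq 2$, every $a\in L_{i-1}\setminus M$ contributes at most $d_B(a)\leq \ln^2 n$ edges, and summing Property~\ref{decomp:size} over all $4\cdot 2^{k-i+1}$ blocks at level $i-1$ gives $|L_{i-1}|\leq 2^{k+3}\,n^{1/3+\alpha_{i-1}\eps}$, hence
$$
|B_i|\;\leq\;2^{k+3}\,n^{1/3+\alpha_{i-1}\eps}\ln^2 n.
$$

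The final step is arithmetic. Using $\alpha_k\eps=2/3$ together with $\alpha_i-\alpha_{i-1}=3\cdot 2^{i-2}$ for $i\geq 2$, each product $|B_i|\cdot n^{(\alpha_k-\alpha_i)\eps}$ is at most $2^{k+3}\ln^2 n\cdot n^{1-3\cdot 2^{i-2}\eps}$, which is maximised at $i=2$ with exponent $1-3\eps$. Together with the smaller $i=1$ contribution $\ln^2 n\cdot n^{2/3-\eps}$, summing over $i\in[k]$ yields
$$
|S_{B^{\ast}}|\;\leq\;k\cdot 2^{k+4}\ln^2 n\cdot n^{1-3\eps}\;\leq\;n^{1-2\eps}
$$
for $n$ sufficiently large, since $k$ and $\eps$ are constants. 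The main point to be careful about is that the $i=1$ case assigns $b$ a slightly different role in $T_v$ (leaf, with $e$ an edge to $x$) rather than an internal tree edge, but the upward path-counting argument applies verbatim; the key feature making the whole estimate work is the degree hypothesis $d_B(a)\leq \ln^2 n$ on the $V\setminus M$ side, which shaves a factor of essentially $n^{1/3}$ off the naive bound $e(B)\cdot \max_e |S_e|\leq n^{2/3}\ln n\cdot n^{2/3-\eps}$.
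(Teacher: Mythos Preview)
Your proof is correct and follows essentially the same approach as the paper's. The only cosmetic difference is that the paper bounds $|S_{B_i}|$ directly by counting triples (vertex $a\in L_{i-1}\setminus M$, Breaker-neighbour $b$ of $a$ in $L_i$, upward path from $b$ to $v$), whereas you factor this as $|B_i|\cdot\max_{e\in B_i}|S_e|$; the resulting estimates and the final summation are identical.
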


\textit{Proof.}
We first bound $|S_{B_i}|$ for every $i\in [k]$. If $v$ is a vertex in $S_{B_i}$,
then this means that the edge set $E(T_v)\cup \{xw:~ w~ \text{is a leaf of }T_v\}$
needs to contain an edge $e=ab\in B$ between a vertex $a\in L_{i-1}\setminus M$
and a vertex $b\in L_i$. By assumption on $B$ we have
$d_B(a) \leq \ln^2 n$. Moreover,
there must exist a path $P$ in $T_v$ leading from $b$ to $v$ which is
of the form 
$
P=(b,v_{i+1},v_{i+2},\ldots,v_{k-1},v)
$ 
with $v_s\in L_s$ for every $i+1\leq s\leq k-1$. 
Analogously to the proof of Claim~\ref{cl:Se} we know that
the number of such paths is at most
$
\prod_{s=i}^{k-1}
n^{(\alpha_{s+1}-\alpha_s)\eps}
=
n^{(\alpha_{k}-\alpha_i)\eps}
$.
Provided $n$ is large enough, we thus conclude that for $i\geq 2$ it holds that
\begin{align*}
|S_{B_i}| 
& < |L_{i-1}|\cdot \ln^2 n \cdot n^{(\alpha_{k}-\alpha_i)\eps}
= 
2^{k-i+3} n^{\frac13+\alpha_{i-1}\eps} \ln^2 n \cdot n^{\frac23-\alpha_i \eps} \\
& < n^{1-(\alpha_i-\alpha_{i-1})\eps} \ln^3 n
\leq n^{1-3\eps} \ln^3 n~ 
\end{align*}
where for the equation we make use of
Definition~(\ref{def:Li}), Property~\ref{decomp:size}, the equation $\alpha_k\eps = \frac{2}{3}$
from (\ref{alpha}), and
where in the last inequality we use that,
according to (\ref{def:alpha}),
$\alpha_i-\alpha_{i-1} = 3\cdot 2^{i-2} \geq 3$ holds.
Moreover, since $L_0=\{x\}$ and using (\ref{def:alpha}) again,
$$
|S_{B_1}| < |L_{0}|\cdot \ln^2 n \cdot n^{(\alpha_{k}-\alpha_1)\eps} = n^{\frac23-\eps} \ln^2 n~ .
$$

Hence, $|S_{B^{\ast}}|\leq \sum_{i\in [k]} |S_{B_i}| < n^{1-2\eps}$, which finishes the proof of the claim.~\done

\medskip

Now, under assumption of $|S_{B^{\ast}}|\leq n^{1-2\eps}$ and the properties in Claim~\ref{cl:Big}
it follows that
\begin{align*}
|(N_{G\setminus B}(A)\cap R')\setminus Big_{S_{B^{\ast}}}|
& \geq 
|N_{G}(A)\cap R'| - e(B) - |Big_{S_{B^{\ast}}}| \\
& \stackrel{\text{(b),(c)}}{>}
n^{\frac23(1+\eps)} - n^{\frac23}\ln n - n^{\frac23+\frac{\eps}{2}}
> 2e(B)~ ,
\end{align*}
provided $n$ is large enough. Thus, there exists a vertex 
$$
z\in (N_{G\setminus B}(A)\cap R')\setminus Big_{S_{B^{\ast}}}
$$
such that $d_B(z)=0$. In particular, we then obtain that
\begin{align*}
d_{G\setminus B}(z,M_{(k,1,\ell)})
=
d_{G}(z,M_{(k,1,\ell)})  
\stackrel{\text{(a)}}{>}
n^{\frac13(1+2\eps)}
> 
d_G(z,S_{B^{\ast}})
\end{align*}
for every $\ell\in [4]$,
where in the last inequality we use that
$z\notin Big_{S_{B^{\ast}}}$.
For every $\ell\in [4]$ 
we thus find a vertex $r_{\ell}\in M_{(k,1,\ell)}\setminus S_{B^{\ast}}$
such that $zr_{\ell} \in E(G\setminus B)$.
The latter already ensures that Property~\ref{good2:top} holds.
By the choice of $T_{\ell}:=T_{r_{\ell}}$
(Claim~\ref{cl:Tv}), we know that
$T_{\ell}$ is a copy of $\Tk$ in $H$
such that $V(T_{\ell})\subset M_{\ell}$
and such that all the leaves of $T_{\ell }$ are adjacent to $x$ in $H$.
Since $r_{\ell}\notin S_{B^{\ast}}$ we also know that
the set $E(T_{\ell})\cup \left\{xw:~ w\text{ is a leaf of }T_{\ell}\right\}$
does not contain any edges from $B_i$ for $i\in [k]$.
Thus, if there is an edge $e=ab\in B$ that belongs to $E(T_{\ell})$ 
with $a\in L_{i-1}$ and $b\in L_i$ for some $2\leq i\leq k$,
then $a\in M$ must hold, according to the definition of
$B_i$, $B^\ast$ and $S_{B^\ast}$. This yields Property~\ref{good2:edges}.
Analogously, if there were edges from $B$ incident to $x$ and
to a leaf of $T_{\ell}$, then $x\in M$ would need to hold.
However, we have $x\notin M$ by assumption, and thus
Property~\ref{good2:leaves} follows.
Finally, \ref{good2:x} holds, since
$V(T_{\ell})\subset M_{\ell}$ by Property~\ref{tree:VE},
and $x\notin M_{\ell}$.
\end{proof}

\section{Concluding remarks}\label{sec:concluding}

{\bf Adding more constraints.}
Another variant of Maker-Breaker games are 
Walker-Breaker games (see e.g.~\cite{CT2016}, \cite{EFKP2014}
and~\cite{FM2019b}) 
which put even more constraints on the edges that Maker
may choose from in every round. 
Here, Maker is only allowed to claim her edges
according to a walk. That is, in each round she must claim a free edge or she must walk along one of her already claimed edges, 
such that this edge is incident to her current position
in the graph.
It is quite natural to ask what happens in the connectivity game
on $G\sim G_{n,p}$ when the game is played in the Walker-Breaker setting.
This is work in progress already. Moreover, one may consider the variant in which Breaker also needs to play as a Walker, as suggested in~\cite{EFKP2014} and~\cite{FM2019a}. 
We have not considered this variant, but we would be interested in
how it behaves compared to the usual Maker-Breaker game
and the Connector-Breaker game, respectively.\\

{\bf Considering different graph properties.}
In our paper we consider the Connector-Breaker game
on $G\sim G_{n,p}$ in which Connector aims for a spanning tree. 
By combining our argument
with the randomized strategy given by Ferber, Krivelevich and Naves~\cite{FKN2015},
we can even show that $n^{-2/3+o(1)}$ is the size of the threshold probability when Connector aims for a Hamilton cycle.
For a clearer presentation in this paper, 
we however skip the full argument here. A proof will appear in a follow-up paper.
Furthermore, it would be interesting to consider other graph properties
and to study the relation between the Connector-Breaker game, the Walker-Breaker game and their Maker-Breaker analogue.
For example, consider the $H$-game where Maker (or Connector/Walker) wins if she claims all the edges of a copy of a given (constant size) graph $H$. Following~\cite{BL2000} and the approach given in~\cite{CT2016}
it turns our that for all the three types of games
the threshold bias for a $(1:b)$ game played on $K_n$ is of the same order, namely $\Theta(n^{1/m_2(H)})$, with $m_2(H)$ being
the maximum $2$-density of $H$. This is in contrast to the connectivity game discussed in this paper.
We wonder whether in the unbiased $H$-game on $G\sim G_{n,p}$
it also holds that the threshold probabilities
for winning either variant are of the same order.

\medskip

{\bf Acknowledgement.}
This project was started as a part of the Bachelor thesis of the second author who would like to thank the group of Anusch Taraz for providing a nice working environment and fruitful discussions.

\end{document}